\documentclass[11pt]{article}
\pdfoutput=1

% Margins
\usepackage[margin=1in]{geometry}

\usepackage{amsfonts} 
\usepackage{epsfig}
\usepackage{hyphenat}
\newcommand{\norm}[1]{\left\lVert#1\right\rVert}

\usepackage{mathrsfs}
\usepackage{algorithm}
\usepackage{algorithmic}
\usepackage{tocvsec2}
\usepackage{caption}
\usepackage{makecell}
\usepackage{epsfig}%
\usepackage[namelimits]{amsmath}%

\usepackage{subfigmat}  %{./subfigmat}
\usepackage{cases}%
\usepackage{amssymb}%
\usepackage{bm}%
\usepackage{xspace}%
\usepackage{array}%
\usepackage{rotating}
\usepackage[table]{xcolor}
\usepackage{tabularx,colortbl}
\usepackage{float}
%\restylefloat{table}
\usepackage{textcomp}%
\usepackage{caption}

\usepackage{booktabs, multicol, multirow}

\usepackage{amsthm}
\usepackage{cite}
%\usepackage{lineno,hyperref}
%\modulolinenumbers[5]
\usepackage{hyperref}

%********************
\usepackage[nameinlink]{cleveref}
%********************

% Packages
%\usepackage[nofiglist,nomarkers]{endfloat}  % This package places all of the figures and tables at the end of the document
\usepackage{amsmath}
\usepackage{dsfont}     % for the real number symbol R
\graphicspath{{figures/}}

\usepackage{amsthm}

\theoremstyle{definition}
\newtheorem{theorem}{Theorem}

\newtheorem{proposition}{Proposition}
\newtheorem{lemma}{Lemma}

\newtheorem{example}{Example}

%%%%%%%%%%%%%%%%%%%%%%%

%------------------------------------------------------------------------------------------------
\begin{document}
%------------------------------------------------------------------------------------------------

\title{An HDG Method for Distributed Control of Convection Diffusion PDEs}

\author{Weiwei Hu%
	\thanks{Department of Mathematics, Oklahoma State
		University, Stillwater, OK (weiwei.hu@okstate.edu). W.~Hu was supported in part by a postdoctoral fellowship for the annual program on Control Theory and its Applications at the Institute for Mathematics and its Applications (IMA) at the University of Minnesota.}%
	\and
	Jiguang Shen%
	\thanks{School  of Mathematics, University of Minnesota, MN (shenx179@umn.edu)}%
	\and
	John~R.~Singler%
	\thanks{Department of Mathematics
		and Statistics, Missouri University of Science and Technology,
		Rolla, MO (\mbox{singlerj@mst.edu}, ywzfg4@mst.edu). J.~Singler and Y.~Zhang were supported in part by National Science Foundation grant DMS-1217122.  J.~Singler and Y.~Zhang thank the IMA for funding research visits, during which some of this work was completed.}
	\and
	Yangwen Zhang%
	\footnotemark[3]
	\and
	Xiaobo Zheng%
	\thanks{College of Mathematics, Sichuan University,
		Chengdu, China (zhengxiaobosc@yahoo.com). X.~Zheng thanks Missouri University of Science and Technology for hosting him as a visiting scholar; some of this work was completed during his research visit.}
}

\date{}

\maketitle

\begin{abstract}
We propose a  hybridizable discontinuous Galerkin (HDG) method to approximate the solution of a distributed optimal control problem governed by an elliptic convection diffusion PDE. We derive optimal a priori error estimates for the state, adjoint state, their fluxes, and the optimal control. We present 2D and 3D numerical experiments to illustrate our theoretical results. 
\end{abstract}

%------------------------------------------------------------------------------------------------
% Sections
%------------------------------------------------------------------------------------------------

\section{Introduction}
\label{intro}
We consider the following  distributed control problem:  Minimize the functional 
\begin{align}
\min J(u)=\frac{1}{2}\| y- y_{d}\|^2_{L^{2}(\Omega)}+\frac{\gamma}{2}\|u\|^2_{L^{2}(\Omega)}, \quad \gamma>0, \label{cost1}
\end{align}
subject to
\begin{equation}\label{Ori_problem}
\begin{split}
-\Delta y+\bm \beta\cdot\nabla y&=f+u \quad\text{in}~\Omega,\\
y&=g\qquad\quad\text{on}~\partial\Omega,
\end{split}
\end{equation}
where $\Omega\subset \mathbb{R}^{d} $ $ (d\geq 2)$ is a Lipschitz polyhedral domain  with boundary $\Gamma = \partial \Omega$, $ f \in L^2(\Omega) $, and the vector field $\bm{\beta}$ satisfies
\begin{align}\label{beta_con}
\nabla\cdot\bm{\beta}  = 0.
\end{align}
It is well known that the optimal control problem \eqref{cost1}-\eqref{Ori_problem} is equivalent to the optimality system
\begin{subequations}\label{eq_adeq}
	\begin{align}
	-\Delta y+\bm \beta\cdot\nabla y &=f+u\quad~\text{in}~\Omega,\label{eq_adeq_a}\\
	y&=g\qquad~~~~\text{on}~\partial\Omega,\label{eq_adeq_b}\\
	-\Delta z-\nabla\cdot(\bm{\beta} z) &=y_d-y\quad~\text{in}~\Omega,\label{eq_adeq_c}\\
	z&=0\qquad\quad~~\text{on}~\partial\Omega,\label{eq_adeq_d}\\
	z-\gamma  u&=0\qquad\quad~~\text{in}~\Omega.\label{eq_adeq_e}
	\end{align}
\end{subequations}

Optimal control problems for convection diffusion equations arise in applications \cite{MR1766429} and are also an important step towards optimal control problems for fluid flows.  Therefore, researchers have developed many different numerical methods for this type of problem including approaches based on finite differences \cite{MR3451511}, standard finite element discretizations \cite{MR2851444,MR2719819,MR2475653}, stabilized finite elements \cite{MR2302057,MR3144702}, the symmetric stabilization method \cite{MR2486088}, the SUPG method \cite{MR2595051,MR3451479}, the edge-stabilization method \cite{MR2463111,MR2068903}, mixed finite elements \cite{MR2550371,MR2851444,MR2971662}, and discontinuous Galerkin (DG) methods \cite{MR2595051,MR2587414,MR2773301,MR3416418,MR3149415,MR3022208,MR2644299}.

DG methods are well suited for problems with convection, but they often have a higher computational cost compared to other methods.  Hybridizable discontinuous Galerkin (HDG) methods keep the advantages of DG methods, but have a lower number of globally coupled unknowns.  HDG methods were introduced in \cite{MR2485455}, and now have been applied to many different problems \cite{MR2772094,MR2513831,MR2558780,MR2796169,MR3626531,MR3522968,MR3463051,MR3452794,MR3343926}.

HDG methods have recently been successfully applied to two PDE optimal control problems.  Zhu and Celiker \cite{MR3508834} obtained optimal convergence rates for an HDG method for a distributed optimal control problem governed by the Poisson equation.  The authors have also studied an HDG method for a difficult Dirichlet optimal boundary control problem for the Poisson equation in \cite{HuShenSinglerZhangZheng_HDG_Dirichlet_control1}.  We proved an optimal superlinear convergence rate for the control in polygonal domains.  Despite the large amount of work on this problem, a superlinear convergence result of this type had only been previously obtained for one other numerical method on a special class of meshes \cite{ApelMateosPfeffererRosch17}.

Due to these recent results and the favorable properties of HDG methods, we continue to investigate HDG for optimal control problems for PDEs in this work.  Specifically, we consider the above distributed control problem for the elliptic convection diffusion equation, and apply an HDG method with polynomials of degree $k$ to approximate all the variables of the optimality system \eqref{eq_adeq}, i.e., the state $y$, dual state $z$, the numerical traces, and the fluxes $\bm q = -\nabla  y $ and $ \bm p = -\nabla z$.  We describe the HDG method and its implementation in \Cref{sec:HDG}.  In \Cref{sec:analysis}, we obtain the error estimates
\begin{align*}
&\norm{y-{y}_h}_{0,\Omega}=O( h^{k+1}),\quad \quad   \;\norm{z-{z}_h}_{0,\Omega}=O( h^{k+1}),\\
&\norm{\bm{q}-\bm{q}_h}_{0,\Omega} = O( h^{k+1}),\quad  \quad\;\; \norm{\bm{p}-\bm{p}_h}_{0,\Omega} = O( h^{k+1}),
\end{align*}
and
\begin{align*}
&\norm{u-{u}_h}_{0,\Omega} = O( h^{k+1}).
\end{align*}
We present 2D and 3D numerical results in \Cref{sec:numerics} and then briefly discuss future work.

%--------------------------------------------------------
\section{HDG scheme for the optimal control problem}
\label{sec:HDG}
%--------------------------------------------------------

We begin by setting notation.

Throughout the paper we adopt the standard notation $W^{m,p}(\Omega)$ for Sobolev spaces on $\Omega$ with norm $\|\cdot\|_{m,p,\Omega}$ and seminorm $|\cdot|_{m,p,\Omega}$ . We denote $W^{m,2}(\Omega)$ by $H^{m}(\Omega)$ with norm $\|\cdot\|_{m,\Omega}$ and seminorm $|\cdot|_{m,\Omega}$. Specifically, $H_0^1(\Omega)=\{v\in H^1(\Omega):v=0 \;\mbox{on}\; \partial \Omega\}$.  We denote the $L^2$-inner products on $L^2(\Omega)$ and $L^2(\Gamma)$ by
\begin{align*}
(v,w) &= \int_{\Omega} vw  \quad \forall v,w\in L^2(\Omega),\\
\left\langle v,w\right\rangle &= \int_{\Gamma} vw  \quad\forall v,w\in L^2(\Gamma).
\end{align*}
Define the space $H(\text{div},\Omega)$ as
\begin{align*}
H(\text{div},\Omega) = \{\bm{v}\in [L^2(\Omega)]^d, \nabla\cdot \bm{v}\in L^2(\Omega)\}.
\end{align*}

Let $\mathcal{T}_h$ be a collection of disjoint elements that partition $\Omega$.  We denote by $\partial \mathcal{T}_h$ the set $\{\partial K: K\in \mathcal{T}_h\}$. For an element $K$ of the collection  $\mathcal{T}_h$, let $e = \partial K \cap \Gamma$ denote the boundary face of $ K $ if the $d-1$ Lebesgue measure of $e$ is non-zero. For two elements $K^+$ and $K^-$ of the collection $\mathcal{T}_h$, let $e = \partial K^+ \cap \partial K^-$ denote the interior face between $K^+$ and $K^-$ if the $d-1$ Lebesgue measure of $e$ is non-zero. Let $\varepsilon_h^o$ and $\varepsilon_h^{\partial}$ denote the set of interior and boundary faces, respectively. We denote by $\varepsilon_h$ the union of  $\varepsilon_h^o$ and $\varepsilon_h^{\partial}$. We finally introduce
\begin{align*}
(w,v)_{\mathcal{T}_h} = \sum_{K\in\mathcal{T}_h} (w,v)_K,   \quad\quad\quad\quad\left\langle \zeta,\rho\right\rangle_{\partial\mathcal{T}_h} = \sum_{K\in\mathcal{T}_h} \left\langle \zeta,\rho\right\rangle_{\partial K}.
\end{align*}

Let $\mathcal{P}^k(D)$ denote the set of polynomials of degree at most $k$ on a domain $D$.  We introduce the discontinuous finite element spaces
\begin{align}
\bm{V}_h  &:= \{\bm{v}\in [L^2(\Omega)]^d: \bm{v}|_{K}\in [\mathcal{P}^k(K)]^d, \forall K\in \mathcal{T}_h\},\\
{W}_h  &:= \{{w}\in L^2(\Omega): {w}|_{K}\in \mathcal{P}^{k}(K), \forall K\in \mathcal{T}_h\},\\
{M}_h  &:= \{{\mu}\in L^2(\mathcal{\varepsilon}_h): {\mu}|_{e}\in \mathcal{P}^k(e), \forall e\in \varepsilon_h\}.
\end{align}
Let  $M_h(o)$ and $M_h(\partial)$  denote the subspaces of $M_h$ containing each $e\in \varepsilon_h^o$ and  $e\in \varepsilon_h^{\partial}$, respectively. Note that $M_h$ consists of functions which are continuous inside the faces (or edges) $e\in \varepsilon_h$ and discontinuous at their borders. In addition, for any function $w\in W_h$ we use $\nabla w$ to denote the piecewise gradient on each element $K\in \mathcal T_h$. A similar convention applies to the divergence $\nabla\cdot\bm r$ for all $\bm r\in \bm V_h$.

%--------------------------------------------------------
\subsection{The HDG Formulation}
%--------------------------------------------------------

The mixed weak form of the optimality system \eqref{eq_adeq_a}-\eqref{eq_adeq_e} is given by
\begin{subequations}\label{mixed}
	\begin{align}
	(\bm q,\bm r_1)-( y,\nabla\cdot \bm r_1)+\langle y,\bm r_1\cdot \bm n\rangle&=0,\label{mixed_a}\\
	(\nabla\cdot(\bm q+\bm \beta y),  w_1)&= ( f+ u, w_1),  \label{mixed_b}\\
	(\bm p,\bm r_2)-(z,\nabla \cdot\bm r_2)+\langle z,\bm r_2\cdot\bm n\rangle&=0,\label{mixed_c}\\
	(\nabla\cdot(\bm p-\bm \beta z),  w_2)&= (y_d- y, w_2),  \label{mixed_d}\\
	( z-\gamma u,v)&=0,\label{mixed_e}
	\end{align}
\end{subequations}
for all $(\bm r_1, w_1,\bm r_2, w_2,v)\in H(\text{div},\Omega)\times L^2(\Omega)\times H(\text{div},\Omega)\times L^2(\Omega)\times L^2(\Omega)$.  Recall we assume $ \bm \beta $ is divergence free; this allows us to rewrite the convection term $ \bm \beta \cdot \nabla y $ in \eqref{eq_adeq_a} as $ \nabla \cdot( \bm \beta y ) $ in \eqref{mixed_b}.

To approximate the solution of this system, the HDG method seeks approximate fluxes ${\bm{q}}_h,{\bm{p}}_h \in \bm{V}_h $, states $ y_h, z_h \in W_h $, interior element boundary traces $ \widehat{y}_h^o,\widehat{z}_h^o \in M_h(o) $, and  control $ u_h \in W_h$ satisfying
\begin{subequations}\label{HDG_discrete2}
	\begin{align}
	%%%%%%%%%%%%%
	(\bm q_h,\bm r_1)_{\mathcal T_h}-( y_h,\nabla\cdot\bm r_1)_{\mathcal T_h}+\langle \widehat y_h^o,\bm r_1\cdot\bm n\rangle_{\partial\mathcal T_h\backslash \varepsilon_h^\partial}&=-\langle  g,\bm r_1\cdot\bm n\rangle_{\varepsilon_h^\partial}, \label{HDG_discrete2_a}\\
	%%%%%%%%%%%%%%%%%%%%
	-(\bm q_h+\bm \beta y_h,  \nabla w_1)_{\mathcal T_h} +\langle\widehat {\bm q}_h\cdot\bm n,w_1\rangle_{\partial\mathcal T_h}  \quad  \nonumber \\ 
	 +\langle \bm \beta\cdot\bm n\widehat y_h^o,w_1\rangle_{\partial\mathcal T_h\backslash\varepsilon_h^\partial} -  ( u_h, w_1)_{\mathcal T_h} &=  - \langle \bm \beta\cdot\bm n g,w_1\rangle_{\varepsilon_h^\partial} + ( f, w_1)_{\mathcal T_h}   \label{HDG_discrete2_b}%  \nonumber \\
	% 
%	& \quad  + ( f, w_1)_{\mathcal T_h}   \label{HDG_discrete2_b}
	\end{align}
	for all $(\bm{r}_1, w_1)\in \bm{V}_h\times W_h$.
	\begin{align}
	%%%%%%%%%%%%%%%%%%%
	(\bm p_h,\bm r_2)_{\mathcal T_h}-(z_h,\nabla\cdot\bm r_2)_{\mathcal T_h}+\langle \widehat z_h^o,\bm r_2\cdot\bm n\rangle_{\partial\mathcal T_h\backslash\varepsilon_h^\partial}&=0,\label{HDG_discrete2_c}\\
	%%%%%%%%%%%%%%%%%%%%%
	-(\bm p_h-\bm \beta z_h, \nabla w_2)_{\mathcal T_h}+\langle\widehat{\bm p}_h\cdot\bm n,w_2\rangle_{\partial\mathcal T_h}  \quad  \nonumber\\
	-\langle\bm \beta\cdot\bm n\widehat z_h^o,w_2\rangle_{\partial\mathcal T_h\backslash \varepsilon_h^\partial} +  ( y_h, w_2)_{\mathcal T_h}&= (y_d, w_2)_{\mathcal T_h},  \label{HDG_discrete2_d}
	\end{align}
	for all $(\bm{r}_2, w_2)\in \bm{V}_h\times W_h$.
	\begin{align}
	\langle\widehat {\bm q}_h\cdot\bm n+\bm \beta\cdot\bm n\widehat y_h^o,\mu_1\rangle_{\partial\mathcal T_h\backslash\varepsilon^{\partial}_h}&=0\label{HDG_discrete2_e},\\
	\langle\widehat{\bm p}_h\cdot\bm n-\bm \beta\cdot\bm n\widehat z_h^o,\mu_2\rangle_{\partial\mathcal T_h\backslash\varepsilon^{\partial}_h}&=0,\label{HDG_discrete2_f}
	\end{align}
%	for all $\mu_1\in M_h(o)$ 
%	\begin{align}
%	\langle\widehat{\bm p}_h\cdot\bm n-\bm \beta\cdot\bm n\widehat z_h^o,\mu_2\rangle_{\partial\mathcal T_h\backslash\varepsilon^{\partial}_h}&=0,\label{HDG_discrete2_f}
%	\end{align}
	for all $\mu_1,\mu_2\in M_h(o)$, and the optimality condition 
	\begin{align}
	(z_h-\gamma u_h, w_3)_{\mathcal T_h} &= 0\label{HDG_discrete2_g},
	\end{align}
	for all $ w_3\in W_h$.  The numerical traces on $\partial\mathcal{T}_h$ are defined as 
	\begin{align}
	\widehat{\bm{q}}_h\cdot \bm n &=\bm q_h\cdot\bm n+\tau_1 (y_h-\widehat y_h^o)   \quad \mbox{on} \; \partial \mathcal{T}_h\backslash\varepsilon_h^\partial, \label{HDG_discrete2_h}\\
	\widehat{\bm{q}}_h\cdot \bm n &=\bm q_h\cdot\bm n+\tau_1 (y_h-g)  \quad \   \mbox{on}\;  \varepsilon_h^\partial, \label{HDG_discrete2_i}\\
	\widehat{\bm{p}}_h\cdot \bm n &=\bm p_h\cdot\bm n+\tau_2(z_h-\widehat z_h^o)\quad \mbox{on} \; \partial \mathcal{T}_h\backslash\varepsilon_h^\partial,\label{HDG_discrete2_j}\\
	\widehat{\bm{p}}_h\cdot \bm n &=\bm p_h\cdot\bm n+\tau_2 z_h\quad\quad\quad\quad\mbox{on}\;  \varepsilon_h^\partial,\label{HDG_discrete2_k}
	\end{align}
\end{subequations}
where $\tau_1$ and $\tau_2$ are positive stabilization functions defined on $\partial \mathcal T_h$.  We specify these functions in the next section.

\subsection{Implementation}
For the numerical implementation, we follow a similar procedure to our earlier work \cite{HuShenSinglerZhangZheng_HDG_Dirichlet_control1}.  First, we perform some basic manipulations to the above system \eqref{HDG_discrete2_a}-\eqref{HDG_discrete2_k} to find that
\[ ({\bm{q}}_h,{\bm{p}}_h, y_h,z_h,{\widehat{y}}_h^o,{\widehat{z}}_h^o)\in \bm{V}_h\times\bm{V}_h\times W_h \times W_h\times M_h(o)\times M_h(o) \]
is the solution of the following weak formulation:
\begin{subequations}\label{imple}
	\begin{align}
	%%%%%%%%%%%%%%%%%%%%%
	(\bm{q}_h, \bm{r_1})_{{\mathcal{T}_h}}- (y_h, \nabla\cdot \bm{r_1})_{{\mathcal{T}_h}}+\langle \widehat{y}_h^o, \bm{r_1}\cdot \bm{n} \rangle_{\partial{{\mathcal{T}_h}}\backslash \varepsilon_h^{\partial}} &= - \langle g, \bm{r_1}\cdot \bm{n} \rangle_{\varepsilon_h^{\partial}}  , \label{imple_a}\\
	%%%%%%%%%%%%%%%%%%%%%%%%%%
	(\bm{p}_h, \bm{r_2})_{{\mathcal{T}_h}}- (z_h, \nabla\cdot \bm{r_2})_{{\mathcal{T}_h}}+\langle \widehat{z}_h^o, \bm{r_2}\cdot \bm{n} \rangle_{\partial{{\mathcal{T}_h}}\backslash \varepsilon_h^{\partial}} &=0, \label{imple_b}\\
	%%%%%%%%%%%%%%%%%%%%%%%%%%%%%%
	(\nabla\cdot\bm{q}_h,  w_1)_{{\mathcal{T}_h}} - (\bm{\beta} y_h, \nabla w_1)_{\mathcal T_h}   + \langle \tau_1 y_h, w_1\rangle_{\partial\mathcal T_h} \quad  \nonumber\\
	%%%%%%%%%%%%%%%%%%%%%%%%%%%%%%%
	- (\gamma^{-1} z_h,w_1)_{\mathcal T_h}  +\langle  (\bm{\beta}\cdot\bm n - \tau_1)\widehat y_h^o, w_1 \rangle_{\partial{{\mathcal{T}_h}}\backslash\varepsilon_h^\partial} &= -\langle  (\bm{\beta}\cdot\bm n - \tau_1) g, w_1 \rangle_{\varepsilon_h^\partial}  \nonumber\\ 
	%%%%%%%%%%%%%%%%%%%%%%%%%%%
	&\quad + (f, w_1)_{{\mathcal{T}_h}}, \label{imple_c}\\
	%%%%%%%%%%%%%%%%%%%%%%%%%
	(\nabla\cdot\bm{p}_h,  w_2)_{{\mathcal{T}_h}} + (y_h, w_2)_{\mathcal T_h} + (\bm{\beta} z_h, \nabla w_2)_{\mathcal T_h} \quad &  \nonumber\\
	%%%%%%%%%%%%%%%%%%%%%%%%%
	+ \langle \tau_2 z_h, w_2\rangle_{\partial \mathcal T_h} -\langle (\tau_2 + \bm{\beta}\cdot \bm n) \widehat{z}_h^o, w_2 \rangle_{\partial{{\mathcal{T}_h}}\backslash \varepsilon_h^{\partial}} &=(y_d, w_2)_{{\mathcal{T}_h}}, \label{imple_d}\\
	%%%%%%%%%%%%%%%%%%%%%%%%%%
	\langle{\bm{q}_h}\cdot \bm{n}, \mu_1 \rangle_{\partial\mathcal{T}_{h}\backslash {\varepsilon_h^{\partial}}}+\langle \tau_1 y_h ,\mu_1 \rangle_{\partial\mathcal{T}_{h}\backslash {\varepsilon_h^{\partial}}} \quad \nonumber \\
	%%%%%%%%%%%%%%%%%%%%%%%%%
	+\langle (\bm{\beta}\cdot\bm n - \tau_1)\widehat{y}_h^{o} ,\mu_1 \rangle_{\partial\mathcal{T}_{h}\backslash {\varepsilon_h^{\partial}}}&=0, \label{imple_e}\\
	%%%%%%%%%%%%%%%%%%%%%%%%%%%%
	\langle {\bm{p}_h}\cdot \bm{n}, \mu_2\rangle_{ \partial\mathcal{T}_{h}\backslash {\varepsilon_h^{\partial}}}+\langle \tau_2 z_h, \mu_2\rangle_{ \partial\mathcal{T}_{h}\backslash {\varepsilon_h^{\partial}}} \quad \nonumber\\
	-\langle (\bm{\beta}\cdot\bm n+\tau_2) \widehat{z}_h^o, \mu_2\rangle_{ \partial\mathcal{T}_{h}\backslash {\varepsilon_h^{\partial}}} &=0,
	\label{imple_f}
	\end{align}
\end{subequations}
for all  $({\bm{r}_1},{\bm{r}_2},w_1,w_2,\mu_1,\mu_2)\in \bm{V}_h\times\bm{V}_h\times W_h \times W_h\times M_h(o)\times M_h(o)$.

Note that we have used the optimality condition \eqref{HDG_discrete2_g} to eliminate $ u_h $ from the discrete equations.  Once the above system \eqref{imple} is solved numerically, $ u_h $ can be easily found using the optimality condition: $ u_h = \gamma^{-1} z_h $.

\subsection{Matrix equations}
Assume $\bm{V}_h = \mbox{span}\{\bm\varphi_i\}_{i=1}^{N_1}$, $W_h=\mbox{span}\{\phi_i\}_{i=1}^{N_2}$, $M_h^{o}=\mbox{span}\{\psi_i\}_{i=1}^{N_3} $. Then
\begin{equation}\label{expre}
\begin{split}
&\bm q_{h}= \sum_{j=1}^{N_1}q_{j}\bm\varphi_j,  \quad y_h = \sum_{j=1}^{N_2}y_{j}\phi_j, \quad \widehat{y}_h^o = \sum_{j=1}^{N_3}\alpha_{j}\psi_{j},\\
&\bm p_{h} =  \sum_{j=1}^{N_1}p_{j}\bm\varphi_j, \quad z_h =  \sum_{j=1}^{N_2} z_{j}\phi_j, \quad  \widehat{z}_h^o = \sum_{j=1}^{N_3}\gamma_{j}\psi_{j}.
\end{split}
\end{equation}
Substitute \eqref{expre} into \eqref{imple_a}-\eqref{imple_f} and use the corresponding  test functions to test \eqref{imple_a}-\eqref{imple_f}, respectively, to obtain the matrix equation
\begin{align}\label{system_equation}
\begin{bmatrix}
A_1  &0 &-A_2&0  & A_{15}&0 \\
0 & A_1 &0 &-A_2& 0  &A_{15} \\
A_2^T &0&A_{12}&-\gamma^{-1}A_4&A_{16}  &0\\
0 &A_2^T & A_4 &A_{13} &0&A_{17} \\
A_{14}^T & 0 &A_{18}&0 &A_{20}&0  \\
0& A_{15}^T &0&A_{19} &0&A_{21}  
\end{bmatrix}
\left[ {\begin{array}{*{20}{c}}
	\mathfrak{q}\\
	\mathfrak{p}\\
	\mathfrak{y}\\
	\mathfrak{z}\\
	\mathfrak{\widehat y}\\
	\mathfrak{\widehat z}
	\end{array}} \right]
=\left[ {\begin{array}{*{20}{c}}
	-b_1\\
	0\\
	-b_5\\
	b_4\\
	0\\
	0\\
	\end{array}} \right],
\end{align}
where $\mathfrak{q},\mathfrak{p},\mathfrak{y},\mathfrak{z},\mathfrak{\widehat y},\mathfrak{\widehat z}$ are the coefficient vectors for $\bm q_h,\bm p_h,y_h,z_h,\widehat y_h^o, \widehat z_h^o$, respectively,  and
\begin{align*}
A_1 &= [(\bm\varphi_j,\bm\varphi_i )_{\mathcal{T}_h}],   &
A_2 &= [(\phi_j,\nabla\cdot\bm{\varphi_i})_{\mathcal{T}_h}],   &
A_3 &= [(\psi_j,\bm{\varphi}_i\cdot \bm n)_{\mathcal{T}_h}],\\
%%%%%%%%%%%%%%%%%%%
A_4 &= [(\phi_j,\phi_i)_{\mathcal{T}_h}],  &
A_5 &= [(\bm\beta\phi_j,\nabla\phi_i)_{\mathcal{T}_h}], &
A_6 &= [\langle  \tau_1\phi_j, \phi_i \rangle_{\partial{{\mathcal{T}_h}}}],\\
%%%%%%%%%%%%%%%%%%%%%%
A_7 &= [\langle  \bm{\beta}\cdot\bm n\phi_j, \phi_i \rangle_{\partial{{\mathcal{T}_h}}}],  &
A_8 &=  [\left\langle \tau_1\psi_j,{\varphi_i}\right\rangle_{\partial\mathcal{T}_h}],  &
A_9 &=  [\left\langle \bm{\beta}\cdot\bm n\psi_j,{\varphi_i}\right\rangle_{\partial\mathcal{T}_h}],\\ 
%%%%%%%%%%%%%%%%%%%%%%%%%%
A_{10} &= [\left\langle \tau_1\psi_j,\psi_i\right\rangle_{\partial\mathcal{T}_h}], & 
A_{11} &= [\left\langle \bm{\beta}\cdot\bm n \psi_j,\psi_i\right\rangle_{\partial\mathcal{T}_h}], &
A_{12} &= A_6-A_5\\
%%%%%%%%%%%%%%%%%%%%%%%%%%%%
A_{13} &= A_5+ A_6-A_7, &
b_1 &= [\langle g, \bm\varphi_i \cdot \bm{n} \rangle_{\varepsilon_h^{\partial}}],&
b_2 &= [\langle  (\bm{\beta}\cdot\bm n - \tau_1) g, \phi \rangle_{\varepsilon_h^\partial}], \\
%%%%%%%%%%%%%%%%%%%%%%%
b_3 &= [(f,\phi_i )_{\mathcal{T}_h}], &
b_4 &= [(y_d,\phi_i )_{\mathcal{T}_h}], &
b_5 &= b_3-b_2.
\end{align*}
The remaining matrices are constructed by  extracting the corresponding rows and columns from linear combinations of $A_3 $, $A_8$, $A_{9}$, $A_{10}$, and $A_{11}$.

\subsection{Local solver}

Next, we use the discontinuous nature of the approximation spaces $\bm{V_h}$ and ${W_h}$ to eliminate all unknowns except the coefficient vectors of the numerical traces.  

The matrix equation \eqref{system_equation} can be rewritten as
\begin{align}\label{system_equation2}
\begin{bmatrix}
B_1 & B_2&B_3\\
-B_2^T & B_4&B_5\\
B_6&B_7&B_8\\
\end{bmatrix}
\left[ {\begin{array}{*{20}{c}}
	\bm{\alpha}\\
	\bm{\beta}\\
	\bm{\gamma}
	\end{array}} \right]
=\left[ {\begin{array}{*{20}{c}}
	\bm b_1\\
	\bm b_2\\
	0
	\end{array}} \right],
\end{align}
where $\bm{\alpha}=[\mathfrak{q};\mathfrak{p}]$, $\bm{\beta}=[\mathfrak{y};\mathfrak{z}]$, $\bm{\gamma}=[\mathfrak{\widehat y};\mathfrak{\widehat z}]$, $ \bm b_1 = [- b_1; 0 ] $, and $ \bm b_2 = [ -b_5; b_4 ] $, and also $\{B_i\}_{i=1}^8$ are the corresponding blocks of the coefficient matrix of \eqref{system_equation}.

In the appendix, we show how the first two equations of  \eqref{system_equation2} can be used to eliminate both $\bm{\alpha}$ and $\bm{\beta}$ in an element-by-element fashion.  We obtain
\begin{align}\label{local_solver}
\left[ {\begin{array}{*{20}{c}}
	\bm{\alpha}\\
	\bm{\beta}
	\end{array}} \right]= \begin{bmatrix}
G_1 & H_1\\
G_2 & H_2
\end{bmatrix}
\left[ {\begin{array}{*{20}{c}}
	\bm \gamma\\
	b
	\end{array}} \right]
\end{align}
and
\begin{align}
B_6\bm{\alpha}+B_7\bm{\beta}+B_8\bm{\gamma} = 0,
\end{align}
where $G_1,G_2,H_1,H_2$ are sparse.  This gives a globally coupled equation for $\bm{\gamma}$ only:
\begin{align}\label{global_eq}
\mathbb{K}\bm{\gamma} = \mathbb{F},
\end{align}
where
\begin{align*}
\mathbb{K}= B_6G_1+B_7G_2+B_8\quad\text{and}\quad\mathbb{F} = B_6H_1+B_7H_2.
\end{align*}
Once $\bm{\gamma}$ is computed, $\bm{\alpha}$ and $\bm{\beta}$ can be quickly and easily computed using \eqref{local_solver}.

\section{Error Analysis}
\label{sec:analysis}

Next, we provide a convergence analysis of the above HDG method for the optimal control problem.  Throughout this section, we assume $ \bm \beta \in [W^{1,\infty}(\Omega)]^d $, $ \Omega $ is a bounded convex polyhedral domain, $ h \leq 1 $, and the solution of the optimality system \eqref{eq_adeq} is smooth enough.  

\subsection{Main result}
For our theoretical results, we require the stabilization functions $\tau_1$ and $\tau_2$ are chosen to satisfy
\begin{description}
	
	\item[\textbf{(A1)}] $\tau_1$ is piecewise constant on $\partial \mathcal T_h$.
	
	\item[\textbf{(A2)}] $\tau_1 = \tau_2 + \bm{\beta}\cdot \bm n$.
	
	\item[\textbf{(A3)}] For any  $K\in\mathcal T_h$, $\min{(\tau_1-\frac 1 2 \bm \beta \cdot \bm n)}|_{\partial K} >0$.
	
\end{description}
We note that \textbf{(A2)} and \textbf{(A3)} imply
\begin{equation}\label{eqn:tau1_condition}
\min{(\tau_2 + \frac 1 2 \bm \beta \cdot \bm n)}|_{\partial K} >0  \quad  \mbox{for any $K\in\mathcal T_h$.}
\end{equation}

\begin{theorem}\label{main_res}
	We have
	\begin{align*}
	\|\bm q-\bm q_h\|_{\mathcal T_h}&\lesssim h^{k+1}(|\bm q|_{k+1}+|y|_{k+1}+|\bm p|_{k+1}+|z|_{k+1}),\\
	\|\bm p-\bm p_h\|_{\mathcal T_h}&\lesssim h^{k+1}(|\bm q|_{k+1}+|y|_{k+1}+|\bm p|_{k+1}+|z|_{k+1}),\\
	\|y-y_h\|_{\mathcal T_h}&\lesssim h^{k+1}(|\bm q|_{k+1}+|y|_{k+1}+|\bm p|_{k+1}+|z|_{k+1}),\\
	\|z-z_h\|_{\mathcal T_h}&\lesssim h^{k+1}(|\bm q|_{k+1}+|y|_{k+1}+|\bm p|_{k+1}+|z|_{k+1}),\\
	\|u-u_h\|_{\mathcal T_h}&\lesssim h^{k+1}(|\bm q|_{k+1}+|y|_{k+1}+|\bm p|_{k+1}+|z|_{k+1}).
	\end{align*}
\end{theorem}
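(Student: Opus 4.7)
I would run a projection--based HDG energy analysis adapted to the coupled state--adjoint--control system. First introduce the Cockburn--Gopalakrishnan--Sayas HDG projection $(\Pi_{\bm V}\bm q,\Pi_W y)\in\bm V_h\times W_h$, defined element-by-element by moment conditions that involve the stabilization $\tau_1$, and analogously the adjoint projection $(\Pi_{\bm V}\bm p,\Pi_W z)$ using $\tau_2$. Under \textbf{(A1)} and \textbf{(A3)} (together with \eqref{eqn:tau1_condition}) the standard HDG projection theory furnishes optimal $h^{k+1}$ bounds for the four projection residuals $\bm q-\Pi_{\bm V}\bm q$, $y-\Pi_W y$, $\bm p-\Pi_{\bm V}\bm p$, $z-\Pi_W z$. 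Writing the discrete auxiliary errors $\delta_h^y := \Pi_W y-y_h$, $\delta_h^{\bm q} := \Pi_{\bm V}\bm q-\bm q_h$, $\delta_h^{\widehat y} := P_M y-\widehat y_h^o$ (with $P_M$ the $L^2$-projection onto $M_h$), and similarly for the adjoint triple $(\delta_h^{\bm p},\delta_h^z,\delta_h^{\widehat z})$, the triangle inequality reduces the whole theorem to bounding the auxiliary errors in $L^2$.

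\textbf{Auxiliary error equations.} Inserting the HDG projections into \eqref{HDG_discrete2_a}--\eqref{HDG_discrete2_k} and subtracting, the moment conditions defining $\Pi_{\bm V}$ and $\Pi_W$ annihilate most projection residuals, leaving a discrete analogue of the mixed system for $\delta_h$: a clean flux identity
\[
(\delta_h^{\bm q},\bm r_1)_{\mathcal T_h}-(\delta_h^y,\nabla\cdot\bm r_1)_{\mathcal T_h}+\langle\delta_h^{\widehat y},\bm r_1\cdot\bm n\rangle_{\partial\mathcal T_h\backslash\varepsilon_h^\partial}=0,
\]
a momentum equation that couples to the adjoint through the term $-\gamma^{-1}(\delta_h^z,w_1)_{\mathcal T_h}$, a mirrored pair of adjoint equations that couples back through $-(\delta_h^y,w_2)_{\mathcal T_h}$, and conservation equations on the interior skeleton. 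The only surviving residuals are element--boundary terms of the form $\langle(\Pi_W y-y)\bm\beta\cdot\bm n,\cdot\rangle$ and their adjoint analogues, which are controlled by $h^{k+1}$ times the seminorms appearing in Theorem~\ref{main_res} via the HDG projection estimates combined with trace inequalities.

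\textbf{Energy estimate via the adjoint trick.} The core of the argument is to test the state auxiliary equations by the adjoint tuple $(\delta_h^{\bm p},\delta_h^z,\delta_h^{\widehat z})$ and the adjoint auxiliary equations by the state tuple $(\delta_h^{\bm q},\delta_h^y,\delta_h^{\widehat y})$, and add. Integration by parts, together with $\nabla\cdot\bm\beta=0$ and the design choice \textbf{(A2)}, $\tau_1=\tau_2+\bm\beta\cdot\bm n$, is tailored so that all convective volume and skeleton contributions cancel \emph{exactly} between the two tests. The control--adjoint coupling $-\gamma^{-1}(\delta_h^z,\delta_h^y)$ produced by the state equation and the state--adjoint coupling $-(\delta_h^y,\delta_h^z)$ produced by the adjoint equation mimic the continuous identity $\gamma^{-1}\|z-z_h\|^2=\|y-y_h\|^2$: their sum produces a \emph{nonnegative} $\gamma^{-1}\|\delta_h^z\|_{\mathcal T_h}^2+\|\delta_h^y\|_{\mathcal T_h}^2$ on the left rather than a sign--indefinite contribution. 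What remains on the left is
\begin{align*}
\|\delta_h^{\bm q}\|_{\mathcal T_h}^2&+\|\delta_h^{\bm p}\|_{\mathcal T_h}^2+\gamma^{-1}\|\delta_h^z\|_{\mathcal T_h}^2+\sum_{K}\bigl\langle(\tau_1-\tfrac12\bm\beta\cdot\bm n)(\delta_h^y-\delta_h^{\widehat y})^2\bigr\rangle_{\partial K}\\
&+\sum_{K}\bigl\langle(\tau_2+\tfrac12\bm\beta\cdot\bm n)(\delta_h^z-\delta_h^{\widehat z})^2\bigr\rangle_{\partial K}\ \lesssim\ h^{k+1}\,(\text{residual}),
\end{align*}
all summands on the left being nonnegative by \textbf{(A3)} and \eqref{eqn:tau1_condition}. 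Cauchy--Schwarz and Young's inequality on the right then yield the optimal $h^{k+1}$ rates for $\|\bm q-\bm q_h\|_{\mathcal T_h}$, $\|\bm p-\bm p_h\|_{\mathcal T_h}$, and (from the $\gamma^{-1}$ term) for $\|z-z_h\|_{\mathcal T_h}$.

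\textbf{Scalar $L^2$ estimate for $y$, control bound, and main obstacle.} For $\|y-y_h\|_{\mathcal T_h}$, if the energy estimate does not already deliver the $h^{k+1}$ rate directly through the stabilization seminorms, I would close the gap by an Aubin--Nitsche duality argument against a dual convection--diffusion problem with source $\delta_h^y$, using HDG orthogonality and the $H^2$--regularity guaranteed by convexity of $\Omega$. The control estimate is then free: the optimality condition \eqref{HDG_discrete2_g} together with $z_h\in W_h$ forces $u_h=\gamma^{-1}z_h$, so $u-u_h=\gamma^{-1}(z-z_h)$ and the bound for $\|u-u_h\|_{\mathcal T_h}$ follows at once from that of $\|z-z_h\|_{\mathcal T_h}$. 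The genuine obstacle of the whole analysis is the cancellation claim in the previous paragraph: verifying that \textbf{(A2)} together with $\nabla\cdot\bm\beta=0$ really does produce \emph{exact} cancellation of the convective volume and skeleton contributions between the two energy tests, while simultaneously leaving behind a coercive combined stabilization form under \textbf{(A3)}. Careful sign bookkeeping on $\bm\beta\cdot\bm n$ in the numerical fluxes \eqref{HDG_discrete2_h}--\eqref{HDG_discrete2_k} and in the conservation conditions \eqref{HDG_discrete2_e}--\eqref{HDG_discrete2_f} is where mistakes are most likely, and is exactly what the balance condition \textbf{(A2)} is engineered to manage.
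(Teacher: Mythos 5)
Your plan correctly identifies the main ingredients (the Cockburn--Gopalakrishnan--Sayas projections, the role of \textbf{(A2)} in making the state and adjoint forms adjoint-consistent, the optimality condition $u_h=\gamma^{-1}z_h$, and the need for a convexity-based duality argument somewhere), but the central energy estimate you propose cannot be produced by any single choice of test functions, and this is a genuine gap rather than a bookkeeping issue. The two families of terms you place on the left-hand side come from mutually exclusive tests. If you test the state error equation with the adjoint error tuple $(\delta_h^{\bm p},-\delta_h^z,-\delta_h^{\widehat z})$ and the adjoint error equation with $(-\delta_h^{\bm q},\delta_h^y,\delta_h^{\widehat y})$ and add, then under \textbf{(A2)} the sum $\mathscr B_1+\mathscr B_2$ vanishes \emph{identically} (this is exactly Lemma \ref{identical_equa} of the paper): you do obtain $\gamma^{-1}\|\delta_h^z\|^2+\|\delta_h^y\|^2$ from the coupling terms, but you obtain \emph{no} control of $\|\delta_h^{\bm q}\|^2$, $\|\delta_h^{\bm p}\|^2$, or the stabilization seminorms, because those quadratic forms have cancelled. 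If instead you test each equation with its own error tuple (Lemma \ref{property_B}), you get the flux and stabilization terms, but the coupling contributions combine to $(\gamma^{-1}-1)(\delta_h^z,\delta_h^y)$, which is sign-indefinite for general $\gamma>0$ and cannot simply be absorbed. So the displayed inequality in your third paragraph, with all five nonnegative terms on the left bounded by $h^{k+1}$ times a residual, does not follow from the argument you describe. A secondary gap: you treat duality as an optional fallback for $\|y-y_h\|$ only, but optimal $L^2$ rates for \emph{both} scalar variables require it; in particular the energy identity alone only controls $\|\varepsilon_h^z-\varepsilon_h^{\widehat z}\|_{\partial\mathcal T_h}$, not $\|\varepsilon_h^z\|_{\mathcal T_h}$, so the claim that $\|z-z_h\|$ comes ``from the $\gamma^{-1}$ term'' of the energy estimate does not close.

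The paper avoids this obstruction by decoupling before estimating. It introduces an auxiliary discrete problem \eqref{HDG_inter_u} in which the \emph{exact} control $u$ is inserted into the state equation. Then (i) the state part is a standard HDG discretization of a convection--diffusion problem, so $\|y-y_h(u)\|+\|\bm q-\bm q_h(u)\|\lesssim h^{k+1}$ follows from known results; (ii) the adjoint part is a standard HDG discretization with perturbed source $y_d-y_h(u)$, analyzed by an energy estimate (Lemma \ref{e_sec}) \emph{plus} a duality argument against \eqref{Dual_PDE} to recover $\|\varepsilon_h^z\|_{\mathcal T_h}$; (iii) the difference between the auxiliary and actual discrete solutions is where the adjoint trick is used, yielding the identity $\gamma\|u-u_h\|^2_{\mathcal T_h}+\|y_h(u)-y_h\|^2_{\mathcal T_h}=-(z_h(u)-z,u-u_h)_{\mathcal T_h}$, whose right-hand side is already controlled by step (ii); and (iv) the flux differences $\zeta_{\bm q},\zeta_{\bm p}$ are then bounded by the diagonal coercivity test, whose coupling terms are now harmless because $\|u-u_h\|$ and $\|\zeta_y\|,\|\zeta_z\|$ are known. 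If you want to salvage a direct one-shot analysis, you would need a bootstrapping between the cross test and the diagonal test together with a duality step, which in effect reproduces the paper's decomposition; there is also a technical point you omit, namely that $\tau_2=\tau_1-\bm\beta\cdot\bm n$ is not piecewise constant, so the term $\langle\tau_2\delta^{\widehat z},w_2-\mu_2\rangle_{\partial\mathcal T_h}$ does not vanish by $P_M$-orthogonality and must be handled via the piecewise-constant surrogate $\widetilde\tau_2$ of Lemma \ref{tau_lem}.
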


\subsection{Preliminary material}
\label{sec:Projectionoperator}
Next, we  introduce the projection operators $\bm{\Pi}_V$ and $\Pi_W$ defined in \cite{MR2991828} that we use frequently in our proof.  The value of the projection on each element $K\in \mathcal{T}_h$ is determined by requiring that the components satisfy the equations
\begin{subequations}\label{projection_operator}
	\begin{align}
	(\bm\Pi_V\bm q+\bm\beta\Pi_Wy,\bm r)_K&=(\bm q+\bm\beta y,\bm r)_K, \label{projection_operator_a}\\
	(\Pi_Wy,w)_K&=(y,w)_K, \label{projection_operator_b}\\
	\langle\bm\Pi_V\bm q\cdot\bm n+\bm\beta\cdot\bm nP_My+\tau_1\Pi_Wy,\mu\rangle_{e}&=\langle\bm q\cdot\bm n+\bm\beta\cdot\bm ny+\tau_1 y,\mu\rangle_{e}, \label{projection_operator_c}
	\end{align}
	for all  $(\bm{r} , w, \mu)\in \bm{\mathcal P}_{k-1}(K)\times {\mathcal P}_{k-1}(K)\times  {\mathcal P}_{k}(e)$ and for all faces $e$ of the simplex $K$.
\end{subequations}
Here, $P_M$ denotes the $L^2$-orthogonal projection  from $L^2(\varepsilon_h)$ into $M_h$ satisfying
\begin{align}\label{P_M}
\left\langle P_M y-y, \mu\right\rangle_e = 0, \quad \forall  e\in \varepsilon_h, \;\forall\mu\in M_h.
\end{align}
The following lemma from \cite{MR2991828} provides the approximation properties of the projection operator \eqref{projection_operator}.
\begin{lemma}\label{pro_error}
	Suppose $k\ge0$, and $ \tau_1 $ satisfies \textbf{(A3)}. Then the system \eqref{projection_operator} is uniquely solvable for $\bm{\Pi}_V\bm{q}$ and $\Pi_W y$. Moreover, we have the following approximation properties
	\begin{subequations}
		\begin{align}
		\|\bm\Pi_V\bm q-\bm q\|_K &\le Ch^{k+1}|\bm q|_{k+1,K}+Ch^{k+1}|y|_{k+1,K},\\
		\|\Pi_Wy-y\|_K&\le Ch^{k+1}|\bm q|_{k+1,K}+Ch^{k+1}|y|_{k+1,K},
		\end{align}
	\end{subequations}
	where $C$ is a constant depending on the polynomial degree and the shape-regularity parameters of the elements.
\end{lemma}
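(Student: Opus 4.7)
Since \eqref{projection_operator} is defined element by element, I would fix a simplex $K\in\mathcal T_h$ and establish (i) unique solvability on $K$ and (ii) the approximation bounds, in that order. For (i), a dimension count matches the $(d+1)\binom{k+d}{d}$ scalar unknowns in $(\bm\Pi_V\bm q,\Pi_W y)$ with the $(d+1)\binom{k-1+d}{d}+(d+1)\binom{k+d-1}{d-1}$ equations from \eqref{projection_operator_a}--\eqref{projection_operator_c} (Pascal's identity), so the system is square and uniqueness is enough. Setting $\bm q=0$, $y=0$ and writing $\bm\xi=\bm\Pi_V\bm q$, $\eta=\Pi_W y$, the plan is the following energy identity. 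Choosing $\bm r=\nabla\eta\in[\mathcal{P}_{k-1}(K)]^d$ in \eqref{projection_operator_a} gives $(\bm\xi+\bm\beta\eta,\nabla\eta)_K=0$; integration by parts together with \eqref{projection_operator_b} (which eliminates $(\nabla\cdot\bm\xi,\eta)_K$) produces $(\bm\xi,\nabla\eta)_K=\langle\bm\xi\cdot\bm n,\eta\rangle_{\partial K}$, while $\nabla\cdot\bm\beta=0$ yields $(\bm\beta\eta,\nabla\eta)_K=\tfrac12\langle\bm\beta\cdot\bm n,\eta^2\rangle_{\partial K}$. Taking $\mu=\eta|_e\in\mathcal{P}_k(e)$ in \eqref{projection_operator_c} and summing over the faces of $K$ produces $\langle\bm\xi\cdot\bm n,\eta\rangle_{\partial K}=-\langle\tau_1,\eta^2\rangle_{\partial K}$. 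Combining these three identities yields $\langle\tau_1-\tfrac12\bm\beta\cdot\bm n,\eta^2\rangle_{\partial K}=0$, and assumption \textbf{(A3)} forces $\eta=0$ on $\partial K$; the moment condition \eqref{projection_operator_b} combined with a standard bubble-function argument then upgrades this to $\eta\equiv 0$ on $K$. With $\eta=0$, $\bm\xi\in[\mathcal{P}_k(K)]^d\subset RT_k(K)$ satisfies the usual Raviart--Thomas moment conditions with zero data, and unisolvency of $RT_k$ gives $\bm\xi=0$.

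For (ii), I would first observe that the projection preserves polynomials of degree $k$: if $(\bm q^*,y^*)\in[\mathcal{P}_k(K)]^d\times\mathcal{P}_k(K)$, then $P_M y^*=y^*$ on $\partial K$, so $(\bm q^*,y^*)$ itself satisfies \eqref{projection_operator}, and uniqueness from (i) gives $(\bm\Pi_V\bm q^*,\Pi_W y^*)=(\bm q^*,y^*)$. Transporting the problem to a reference simplex $\hat K$ via the standard affine map, uniqueness from (i) together with continuity of traces on $\hat K$ provides a stability bound
\begin{equation*}
\|\bm\Pi_V\bm q\|_{\hat K}+\|\Pi_W y\|_{\hat K}\lesssim\|\bm q\|_{1,\hat K}+\|y\|_{1,\hat K},
\end{equation*}
with constant depending on $k$ and $\|\bm\beta\|_{W^{1,\infty}}$. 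Linearity plus polynomial preservation then yield
\begin{equation*}
\|\bm\Pi_V\bm q-\bm q\|_{\hat K}+\|\Pi_W y-y\|_{\hat K}\lesssim\inf_{\bm q^*,y^*}\bigl(\|\bm q-\bm q^*\|_{1,\hat K}+\|y-y^*\|_{1,\hat K}\bigr),
\end{equation*}
where the infimum runs over $[\mathcal{P}_k(\hat K)]^d\times\mathcal{P}_k(\hat K)$. The Bramble--Hilbert lemma bounds the infimum by a constant multiple of $|\bm q|_{k+1,\hat K}+|y|_{k+1,\hat K}$, and scaling back to $K$ introduces the factor $h_K^{k+1}$, producing the stated estimates.

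The main technical hurdle is the energy identity in (i): arranging the integration-by-parts signs together with the use of $\nabla\cdot\bm\beta=0$ so that the boundary contribution collapses to exactly the weight $\tau_1-\tfrac12\bm\beta\cdot\bm n$ controlled by \textbf{(A3)} is delicate, and any sign slippage would destroy the argument. A secondary subtlety is tracking the variable coefficient $\bm\beta$ through the reference-element stability step and the subsequent affine scaling, to ensure that only $\|\bm\beta\|_{W^{1,\infty}}$ enters the final constant and that no extra factors of $h_K$ are introduced.
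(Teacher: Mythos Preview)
The paper does not prove this lemma; it is quoted from \cite{MR2991828} and stated without argument. So there is no ``paper's own proof'' to compare against, and the relevant question is whether your sketch stands on its own.

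Your uniqueness argument (i) is correct and is essentially the one used in the cited reference: the dimension count is right, the energy identity you assemble does collapse to $\langle\tau_1-\tfrac12\bm\beta\cdot\bm n,\eta^2\rangle_{\partial K}=0$, assumption \textbf{(A3)} then kills $\eta$ on $\partial K$, the bubble argument upgrades this to $\eta\equiv 0$, and the remaining conditions on $\bm\xi$ are exactly the $RT_k$ degrees of freedom.

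For the approximation bounds (ii), your polynomial-preservation plus Bramble--Hilbert route is standard, but the point you yourself flag as a ``secondary subtlety'' is the only place where the sketch is not yet a proof. On the reference element the projection system carries the pulled-back coefficient $\hat{\bm\beta}_K$ and the pulled-back stabilization, which vary with $K$; finite-dimensionality and uniqueness give you a bounded solution operator for each \emph{fixed} coefficient, but not a bound uniform over the family $\{\hat{\bm\beta}_K\}_{K\in\mathcal T_h}$ and over $h$. You need either a compactness argument on the coefficient set or, as the cited reference actually does, a direct stability estimate on the physical element in which the dependence on $\|\bm\beta\|_{W^{1,\infty}}$, on $\tau_1$, and on the \textbf{(A3)} lower bound is made explicit. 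Once that uniform stability is in hand, your Bramble--Hilbert step goes through as written.
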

For the convection diffusion optimal control problem, we introduce another projection operator associated to the dual problem.  The projection $\widetilde{\bm \Pi}_{V}$ and $\widetilde{\Pi}_W$ is determined by the following equations 
\begin{subequations}\label{projection_operator1}
	\begin{align}
	(\widetilde{\bm\Pi}_V\bm p-\bm\beta\widetilde{\Pi}_Wz,\bm r)_K&=(\bm p-\bm\beta z,\bm r)_K, \label{projection_operator1_a}\\
	(\widetilde {\Pi}_Wz,w)_K&=(z,w)_K,\label{projection_operator1_b}\\
	\langle\widetilde{\bm\Pi}_V\bm p\cdot\bm n-\bm\beta\cdot\bm nP_Mz+\tau_2\widetilde{\Pi}_Wz,\mu\rangle_{e}&=\langle\bm p\cdot\bm n-\bm\beta\cdot\bm nz+\tau_2 z,\mu\rangle_{e},\label{projection_operator1_c}
	\end{align}
	for all  $(\bm{r} , w, \mu)\in \bm{\mathcal P}_{k-1}(K)\times {\mathcal P}_{k-1}(K)\times  {\mathcal P}_{k}(e)$ and for all faces $e$ of the simplex $K$.
\end{subequations}

Again, results from \cite{MR2991828} give the following estimates.
\begin{lemma}\label{pro_error1}
	Suppose $k\ge0$, and $ \tau_2 $ satisfies  \eqref{eqn:tau1_condition}. Then the system \eqref{projection_operator1} is uniquely solvable for $\widetilde{\bm{\Pi}}_V\bm{p}$ and $\widetilde{\Pi}_W z$, and% Furthermore, there is a constant $C$ independent of the partition such that 
	\begin{subequations}
		\begin{align}
		\|\widetilde{\bm\Pi}_V\bm p-\bm p\|_{K}&\le Ch^{k+1}|\bm p|_{k+1,K}+Ch^{k+1}|z|_{k+1,K},\\
		\|\widetilde{\Pi}_Wz-z\|_{K}&\le Ch^{k+1}|\bm p|_{k+1,K}+Ch^{k+1}|z|_{k+1,K},
		\end{align}
	\end{subequations}
	where $C$ is a constant depending on the polynomial degree and the shape-regularity parameters of the elements.
\end{lemma}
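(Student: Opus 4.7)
The plan is to derive Lemma \ref{pro_error1} as an immediate consequence of Lemma \ref{pro_error} by exploiting the structural symmetry between the primal projection system \eqref{projection_operator} and its dual counterpart \eqref{projection_operator1}. Rather than redoing the entire analysis from \cite{MR2991828}, I would show that \eqref{projection_operator1} is, up to a change of sign in the convection coefficient and a relabeling of the stabilization, literally the same system as \eqref{projection_operator}.

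Concretely, I would perform the substitution $\bm\beta \mapsto -\bm\beta$ and $\tau_1 \mapsto \tau_2$ in the defining equations \eqref{projection_operator_a}--\eqref{projection_operator_c}. Equation \eqref{projection_operator_a} becomes $(\bm\Pi_V\bm q - \bm\beta\Pi_W y,\bm r)_K = (\bm q - \bm\beta y,\bm r)_K$, and \eqref{projection_operator_c} becomes $\langle \bm\Pi_V\bm q\cdot\bm n - \bm\beta\cdot\bm n P_M y + \tau_2 \Pi_W y,\mu\rangle_e = \langle \bm q\cdot\bm n - \bm\beta\cdot\bm n y + \tau_2 y,\mu\rangle_e$, while \eqref{projection_operator_b} is unchanged. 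After the additional relabeling $\bm q \mapsto \bm p$ and $y \mapsto z$, these are exactly \eqref{projection_operator1_a}--\eqref{projection_operator1_c}. Thus $\widetilde{\bm\Pi}_V$ and $\widetilde\Pi_W$ are precisely the HDG projection of \cite{MR2991828} associated with the convection field $-\bm\beta$ and stabilization $\tau_2$.

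Next I would check that the positivity hypothesis underlying Lemma \ref{pro_error} transfers correctly. The condition (A3), which reads $\min(\tau_1 - \tfrac{1}{2}\bm\beta\cdot\bm n)|_{\partial K} > 0$, under the substitution $\bm\beta \mapsto -\bm\beta$, $\tau_1\mapsto\tau_2$ becomes $\min(\tau_2 + \tfrac{1}{2}\bm\beta\cdot\bm n)|_{\partial K} > 0$. This is precisely the consequence \eqref{eqn:tau1_condition} of the paper's assumptions (A2) and (A3), which is the hypothesis stated in Lemma \ref{pro_error1}. So the premise of Lemma \ref{pro_error} is satisfied by the transformed system, yielding at once unique solvability of \eqref{projection_operator1} and the approximation bounds $\|\widetilde{\bm\Pi}_V\bm p - \bm p\|_K \lesssim h^{k+1}(|\bm p|_{k+1,K} + |z|_{k+1,K})$ and $\|\widetilde\Pi_W z - z\|_K \lesssim h^{k+1}(|\bm p|_{k+1,K} + |z|_{k+1,K})$.

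The only subtlety I would be careful about is that Lemma \ref{pro_error} is phrased in terms of the paper's specific (A3), which is tied to the primal convection $\bm\beta$; strictly speaking, we are applying the general result of \cite{MR2991828} with a different convection field. I would therefore cite the original lemma in \cite{MR2991828} directly, noting that its hypothesis is stated as $\min(\tau - \tfrac{1}{2}\bm b\cdot\bm n)|_{\partial K} > 0$ for the generic convection $\bm b$ and generic stabilization $\tau$ appearing in the projection, and that the argument there (local solvability from this positivity, followed by the usual Bramble--Hilbert scaling to pass from the reference element to $K$) is indifferent to the sign of the convection field. This is the only step worth flagging; everything else is a direct symmetry argument.
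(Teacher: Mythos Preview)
Your proposal is correct and follows essentially the same approach as the paper: the paper simply states that ``results from \cite{MR2991828} give the following estimates'' without further argument, and your substitution $\bm\beta\mapsto-\bm\beta$, $\tau_1\mapsto\tau_2$ is precisely the observation that makes this citation legitimate. You have made explicit the symmetry that the paper leaves implicit, and correctly verified that condition \eqref{eqn:tau1_condition} is the transformed version of \textbf{(A3)}.
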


Next, we present a basic approximation of the function $\bm \beta$. Let $ \bm P_0 $ be the vectorial piecewise-constant $ L^2 $ projection.  We have the following estimate:
\begin{align*}
\norm {\bm\beta - \bm P_0 \bm\beta}_{0,\infty,\Omega} \le C h \|\bm \beta\|_{1,\infty,\Omega}.
\end{align*}

\begin{lemma}\label{tau_lem}
	For any $e\in \partial K$, define $\widetilde \tau_2|_e = \tau_1|_e - \bm P_0 \bm\beta|_K\cdot \bm n_{e}$, we have 
	\begin{align*}
	\norm{\tau_2 - \widetilde \tau_2}_{0,\infty,\partial \mathcal T_h} \le C_{\bm \beta}h\|\bm \beta\|_{1,\infty,\Omega}.
	\end{align*}
\end{lemma}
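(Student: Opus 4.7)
The plan is to combine the definition of $\widetilde{\tau}_2$ with assumption \textbf{(A2)} to rewrite the difference $\tau_2 - \widetilde{\tau}_2$ purely in terms of the projection error $\bm{\beta} - \bm{P}_0\bm{\beta}$, and then invoke the stated piecewise-constant approximation bound on $\bm{\beta}$.

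More concretely, first I would use \textbf{(A2)} to write $\tau_2 = \tau_1 - \bm{\beta}\cdot\bm{n}$ on each face $e \in \partial K$. Then, by the definition $\widetilde{\tau}_2|_e = \tau_1|_e - \bm{P}_0\bm{\beta}|_K \cdot \bm{n}_e$, the $\tau_1$ contributions cancel and we obtain the identity
\begin{align*}
(\tau_2 - \widetilde{\tau}_2)|_e = \bigl(\bm{P}_0\bm{\beta}|_K - \bm{\beta}\bigr)\cdot \bm{n}_e.
\end{align*}
Since $|\bm{n}_e| = 1$, taking the $L^\infty$ norm face-by-face and then over $\partial\mathcal{T}_h$ gives
\begin{align*}
\|\tau_2 - \widetilde{\tau}_2\|_{0,\infty,\partial\mathcal{T}_h} \le \|\bm{\beta} - \bm{P}_0\bm{\beta}\|_{0,\infty,\Omega}.
\end{align*}
Finally, applying the approximation bound for $\bm{P}_0$ stated immediately before the lemma yields the desired estimate with $C_{\bm{\beta}} = C$.

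There is no substantive obstacle here: the result is essentially a bookkeeping identity together with one standard projection estimate. The only mild subtlety is to note that $\tau_1$ is piecewise constant on $\partial\mathcal{T}_h$ by \textbf{(A1)}, so no approximation error on $\tau_1$ enters; the entire error comes from replacing $\bm{\beta}$ by its piecewise-constant projection in the normal-component term.
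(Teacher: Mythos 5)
Your proposal is correct and follows essentially the same route as the paper: apply \textbf{(A2)} to cancel the $\tau_1$ terms, reduce the difference to $(\bm\beta - \bm P_0\bm\beta)\cdot\bm n$, and invoke the piecewise-constant projection estimate. Nothing is missing.
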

\begin{proof}
	\begin{align*}
	\norm{\tau_2 - \widetilde \tau_2}_{0,\infty,\partial\mathcal T_h} &= \sum_{K\in\mathcal T_h}\norm{\tau_2 - \widetilde \tau_2}_{0,\infty,\partial K} \\
	& = \sum_{K\in\mathcal T_h}  \norm{\tau_1 - \bm\beta\cdot\bm n - \tau_1 + \bm P_0\bm \beta\cdot \bm n}_{0,\infty,\partial K}\\
	& =  \sum_{K\in\mathcal T_h}  \norm{\bm\beta\cdot\bm n -  \bm P_0\bm \beta\cdot \bm n}_{0,\infty, K}\\
	& \le  \norm {\bm\beta - \bm P_0 \bm\beta}_{0,\infty,\Omega}\\
	& \le Ch\|\bm \beta\|_{1,\infty,\Omega}.
	\end{align*}
\end{proof}

We define the following HDG operators $ \mathscr B_1$ and $ \mathscr B_2 $.
\begin{equation}\label{def_B1}
\begin{split}
\hspace{1em}&\hspace{-1em}  \mathscr  B_1( \bm q_h,y_h,\widehat y_h^o;\bm r_1,w_1,\mu_1) \\
&=(\bm q_h,\bm r_1)_{\mathcal T_h}-( y_h,\nabla\cdot\bm r_1)_{\mathcal T_h}+\langle \widehat y_h^o,\bm r_1\cdot\bm n\rangle_{\partial\mathcal T_h\backslash \varepsilon_h^\partial} \\
&\quad-(\bm q_h+\bm \beta y_h,  \nabla w_1)_{\mathcal T_h} +\langle {\bm q}_h\cdot\bm n +\tau_1y_h,w_1\rangle_{\partial\mathcal T_h}+\langle (\bm\beta\cdot\bm n -\tau_1) \widehat y_h^o,w_1\rangle_{\partial\mathcal T_h\backslash \varepsilon_h^\partial}\\
&\quad -\langle  {\bm q}_h\cdot\bm n+\bm \beta\cdot\bm n\widehat y_h^o +\tau_1(y_h-\widehat y_h^o),\mu_1\rangle_{\partial\mathcal T_h\backslash\varepsilon^{\partial}_h},\\
\hspace{1em}&\hspace{-1em}  \mathscr B_2(\bm p_h,z_h,\widehat z_h^o;\bm r_2, w_2,\mu_2)\\
&=(\bm p_h,\bm r_2)_{\mathcal T_h}-( z_h,\nabla\cdot\bm r_2)_{\mathcal T_h}+\langle \widehat z_h^o,\bm r_2\cdot\bm n\rangle_{\partial\mathcal T_h\backslash\varepsilon_h^\partial}\\
&\quad-(\bm p_h-\bm \beta z_h,  \nabla w_2)_{\mathcal T_h}
+\langle {\bm p}_h\cdot\bm n +\tau_2 z_h ,w_2\rangle_{\partial\mathcal T_h} -\langle (\bm \beta\cdot\bm n + \tau_2)\widehat z_h^o ,w_2\rangle_{\partial\mathcal T_h\backslash\varepsilon_h^\partial}
\\
&\quad-\langle  {\bm p}_h\cdot\bm n-\bm \beta\cdot\bm n\widehat z_h^o +\tau_2(z_h-\widehat z_h^o),\mu_2\rangle_{\partial\mathcal T_h\backslash\varepsilon^{\partial}_h}.
\end{split}
\end{equation}
By the definition in \eqref{def_B1},  we can rewrite the HDG formulation of the optimality system \eqref{HDG_discrete2} as follows: find $({\bm{q}}_h,{\bm{p}}_h,y_h,z_h,u_h,\widehat y_h^o,\widehat z_h^o)\in \bm{V}_h\times\bm{V}_h\times W_h \times W_h\times W_h\times M_h(o)\times M_h(o)$  such that
\begin{subequations}\label{HDG_full_discrete}
	\begin{align}
	\mathscr B_1(\bm q_h,y_h,\widehat y_h^o;\bm r_1,w_1,\mu_1)&=( f+ u_h, w_1)_{\mathcal T_h}\ \nonumber\\
	&\quad-\langle g, (\bm\beta\cdot\bm n-\tau_1)w_1+\bm r_1\cdot\bm n \rangle_{\varepsilon_h^\partial},\label{HDG_full_discrete_a}\\
	\mathscr B_2(\bm p_h,z_h,\widehat z_h^o;\bm r_2,w_2,\mu_2)&=(y_d-y_h,w_2)_{\mathcal T_h},\label{HDG_full_discrete_b}\\
	(z_h-\gamma u_h,w_3)_{\mathcal T_h}&= 0,\label{HDG_full_discrete_e}
	\end{align}
\end{subequations}
for all $\left(\bm{r}_1, \bm{r}_2,w_1,w_2,w_3,\mu_1,\mu_2\right)\in \bm{V}_h\times\bm{V}_h\times W_h \times W_h\times W_h\times M_h(o)\times M_h(o)$.

Next, we present a basic property of the operators $\mathscr B_1$ and $\mathscr B_2$,   and show the HDG equations \eqref{HDG_full_discrete} have a unique solution.
\begin{lemma}\label{property_B}
	For any $ ( \bm v_h, w_h, \mu_h ) \in \bm V_h \times W_h \times M_h(o) $, we have
	\begin{align*}
	%%%%%%%%%%%%%%%%
	\hspace{2em}&\hspace{-2em} \mathscr B_1(\bm v_h,w_h,\mu_h;\bm v_h,w_h,\mu_h)\\
	%%%%%%%%%%%%%%%
	&=(\bm v_h,\bm v_h)_{\mathcal T_h}+ \langle (\tau_1 - \frac 12 \bm \beta\cdot\bm n)(w_h-\mu_h),w_h-\mu_h\rangle_{\partial\mathcal T_h\backslash \varepsilon_h^\partial}\\
	%%%%%%%%%%%%%%%%%%%
	&\quad+\langle (\tau_1-\frac12\bm \beta\cdot\bm n) w_h,w_h\rangle_{\varepsilon_h^\partial},\\
	%%%%%%%%%%%%%%%%%%%%%%%
	\hspace{2em}&\hspace{-2em}\mathscr B_2(\bm v_h,w_h,\mu_h;\bm v_h,w_h,\mu_h)\\
	%%%%%%%%%%%%%%%%%%%%%%%
	&=(\bm v_h,\bm v_h)_{\mathcal T_h}+ \langle (\tau_2 + \frac 12 \bm \beta\cdot\bm n)(w_h-\mu_h),w_h-\mu_h\rangle_{\partial\mathcal T_h\backslash \varepsilon_h^\partial}\\
	%%%%%%%%%%%%%%%%%%%%%%%%
	&\quad+\langle (\tau_2+\frac12\bm \beta\cdot\bm n) w_h,w_h\rangle_{\varepsilon_h^\partial}.
	\end{align*}
\end{lemma}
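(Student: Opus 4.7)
The plan is to prove each identity by plugging the same triple $(\bm v_h, w_h, \mu_h)$ into both argument slots of $\mathscr B_1$ (resp.\ $\mathscr B_2$), then carry out integration by parts and cancellation to collapse the thirteen or so resulting terms into the four claimed terms. I will focus on the $\mathscr B_1$ identity; the argument for $\mathscr B_2$ is a mirror image, with $\tau_1$ replaced by $\tau_2$ and a sign flip on the convection term coming from the fact that $\mathscr B_2$ contains $-(\bm p_h - \bm\beta z_h, \nabla w_2)$ instead of $-(\bm q_h + \bm\beta y_h, \nabla w_1)$.

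The first step is to apply elementwise integration by parts to the volume term $-(w_h, \nabla\cdot \bm v_h)_{\mathcal T_h} = (\nabla w_h, \bm v_h)_{\mathcal T_h} - \langle w_h, \bm v_h\cdot\bm n\rangle_{\partial\mathcal T_h}$. The resulting $(\nabla w_h, \bm v_h)_{\mathcal T_h}$ cancels against $-(\bm v_h, \nabla w_h)_{\mathcal T_h}$ coming from the other volume term, leaving the clean diagonal piece $(\bm v_h, \bm v_h)_{\mathcal T_h}$. Likewise the boundary pieces $-\langle w_h, \bm v_h\cdot\bm n\rangle_{\partial\mathcal T_h}$ cancels $\langle \bm v_h\cdot\bm n, w_h\rangle_{\partial\mathcal T_h}$, and the terms $\langle \mu_h, \bm v_h\cdot\bm n\rangle_{\partial\mathcal T_h\setminus\varepsilon_h^\partial}$ and $-\langle \bm v_h\cdot\bm n, \mu_h\rangle_{\partial\mathcal T_h\setminus\varepsilon_h^\partial}$ cancel as well. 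So all $\bm v_h$-terms collapse to $(\bm v_h, \bm v_h)_{\mathcal T_h}$.

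Next I would handle the convection volume term. Because $\nabla\cdot\bm\beta = 0$ by \eqref{beta_con}, we can write $-(\bm\beta w_h, \nabla w_h)_{\mathcal T_h} = -\tfrac12(\bm\beta, \nabla(w_h^2))_{\mathcal T_h}$, and elementwise integration by parts converts this to $-\tfrac12 \langle \bm\beta\cdot\bm n\, w_h, w_h\rangle_{\partial\mathcal T_h}$. I then split this, and the $\langle \tau_1 w_h, w_h\rangle_{\partial\mathcal T_h}$ term, along $\partial\mathcal T_h = (\partial\mathcal T_h\setminus\varepsilon_h^\partial)\cup \varepsilon_h^\partial$. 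The boundary part immediately produces the claimed $\langle (\tau_1 - \tfrac12 \bm\beta\cdot\bm n) w_h, w_h\rangle_{\varepsilon_h^\partial}$.

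The main obstacle is organizing the interior-face terms into the desired square $\langle (\tau_1 - \tfrac12 \bm\beta\cdot\bm n)(w_h-\mu_h), w_h-\mu_h\rangle_{\partial\mathcal T_h\setminus\varepsilon_h^\partial}$. The key observation that makes the arithmetic work is that $\mu_h \in M_h(o)$ is single-valued on each interior face while $\bm n$ flips sign between the two adjacent elements, so
\begin{equation*}
\langle \bm\beta\cdot\bm n\, \mu_h, \mu_h\rangle_{\partial\mathcal T_h\setminus\varepsilon_h^\partial} = 0.
\end{equation*}
Inserting this identity, the remaining interior-face contributions are
\begin{equation*}
\langle \tau_1 w_h, w_h\rangle - \tfrac12 \langle \bm\beta\cdot\bm n\, w_h, w_h\rangle + \langle(\bm\beta\cdot\bm n-\tau_1)\mu_h, w_h\rangle - \langle \tau_1(w_h-\mu_h), \mu_h\rangle,
\end{equation*}
(all over $\partial\mathcal T_h\setminus\varepsilon_h^\partial$), and a direct expansion of $\langle(\tau_1 - \tfrac12\bm\beta\cdot\bm n)(w_h-\mu_h), w_h-\mu_h\rangle$ using the vanishing $\mu_h$-$\mu_h$ piece shows the two agree. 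For $\mathscr B_2$ the same strategy applies verbatim with $\tau_1 \leftrightarrow \tau_2$ and the convection sign reversed, producing $\tau_2 + \tfrac12 \bm\beta\cdot\bm n$ in place of $\tau_1 - \tfrac12 \bm\beta\cdot\bm n$.
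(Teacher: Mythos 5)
Your proposal is correct and follows essentially the same route as the paper's proof: cancel the $\bm v_h$ terms by elementwise integration by parts, use $\nabla\cdot\bm\beta=0$ to convert the convection volume term to $\tfrac12\langle\bm\beta\cdot\bm n\,w_h,w_h\rangle_{\partial\mathcal T_h}$, and invoke the single-valuedness of $\mu_h$ to kill $\langle\bm\beta\cdot\bm n\,\mu_h,\mu_h\rangle_{\partial\mathcal T_h\setminus\varepsilon_h^\partial}$ before completing the square. The only differences are cosmetic (deriving the convection identity via $\nabla(w_h^2)$ rather than $\nabla\cdot(\bm\beta w_h)$, and placing the $\mu_h$-cancellation earlier).
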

\begin{proof}
	We only prove the first identity; the second can be obtained by the same argument.
	\begin{align*}
	%%%%%%%%%%%%%%%
	\hspace{3em}&\hspace{-3em} 	\mathscr B_1(\bm v_h,w_h,\mu_h;\bm v_h,w_h,\mu_h)\\
	%%%%%%%%%%%%%%%%
	&=(\bm v_h,\bm v_h)_{\mathcal T_h}-( w_h,\nabla\cdot\bm v_h)_{\mathcal T_h}+\langle \mu_h,\bm v_h\cdot\bm n\rangle_{\partial\mathcal T_h\backslash \varepsilon_h^\partial}\\
	%%%%%%%%%%%%%%%
	& \quad -(\bm v_h+\bm \beta w_h,  \nabla w_h)_{\mathcal T_h} +\langle {\bm v}_h\cdot\bm n +\tau_1 w_h,w_h\rangle_{\partial\mathcal T_h} \\
	%%%%%%%%%%%%%%%%%%%%%%%%%%
	&\quad+\langle (\bm\beta\cdot\bm n -\tau_1) \mu_h,w_h\rangle_{\partial\mathcal T_h\backslash \varepsilon_h^\partial}\\
	%%%%%%%%%%%%%%%%
	& \quad-\langle  {\bm v}_h\cdot\bm n+\bm \beta\cdot\bm n\mu_h +\tau_1(w_h - \mu_h),\mu_h \rangle_{\partial\mathcal T_h\backslash\varepsilon^{\partial}_h}\\
	%%%%%%%%%%%%%%%%
	&=(\bm v_h,\bm v_h)_{\mathcal T_h}-(\bm \beta w_h,  \nabla w_h)_{\mathcal T_h}+\langle  \tau_1 w_h,w_h\rangle_{\partial\mathcal T_h} \\
	%%%%%%%%%%%%%%%%%
	&\quad+\langle (\bm\beta\cdot\bm n -\tau_1) \mu_h,w_h\rangle_{\partial\mathcal T_h\backslash \varepsilon_h^\partial} -\langle  \bm \beta\cdot\bm n \mu_h +\tau_1(w_h - \mu_h ),\mu_h\rangle_{\partial\mathcal T_h\backslash\varepsilon^{\partial}_h}.
	\end{align*}
	Moreover,
	\begin{align*}
	%%%%%%%%%%%%%%%
	(\bm \beta w_h,\nabla w_h)_{\mathcal T_h}=(\nabla\cdot(\bm \beta w_h),w_h)_{\mathcal T_h} =\langle\bm \beta\cdot\bm n w_h,w_h\rangle_{\partial\mathcal T_h}-(\bm \beta w_h,\nabla w_h)_{\mathcal T_h},
	\end{align*}
	which implies
	\begin{align*}
	(\bm \beta w_h,\nabla w_h)_{\mathcal T_h}=\frac12\langle\bm \beta\cdot\bm n w_h,w_h\rangle_{\partial\mathcal T_h}.
	\end{align*}
	Then we obtain
	\begin{align*}
	%%%%%%%%%%%%%%%%%
	\hspace{1em}&\hspace{-1em}  \mathscr B_1 (\bm v_h,w_h,\mu_h;\bm v_h,w_h,\mu_h)\\
	&=(\bm v_h,\bm v_h)_{\mathcal T_h}+ \langle ( \tau_1 - \frac 12 \bm \beta\cdot\bm n)(w_h-\mu_h),w_h-\mu_h\rangle_{\partial\mathcal T_h\backslash \varepsilon_h^\partial}\\
	%%%%%%%%%%%%%%%%%%
	&\quad + \langle (\tau_1-\frac12\bm \beta\cdot\bm n) w_h,w_h\rangle_{\varepsilon_h^\partial}-\frac12\langle\bm \beta\cdot\bm n \mu_h,\mu_h\rangle_{\partial\mathcal T_h\backslash \varepsilon_h^\partial}.
	\end{align*}
	Since $ \mu_h$ is single-valued across the interfaces, we have
	\begin{align*}
	-\frac12\langle\bm \beta\cdot\bm n\mu_h,\mu_h\rangle_{\partial\mathcal T_h\backslash\varepsilon_h^\partial}=0.
	\end{align*}
	This completes the proof.
\end{proof}

Next, we give a property of the HDG operators $\mathscr B_1$ and $\mathscr B_2$ that is critical to our error analysis of the method.
\begin{lemma}\label{identical_equa}
	If \textbf{(A2)} holds, then
	$$\mathscr B_1 (\bm q_h,y_h,\widehat y_h^o;\bm p_h,-z_h,-\widehat z_h^o) + \mathscr B_2 (\bm p_h,z_h,\widehat z_h^o;-\bm q_h,y_h,\widehat y_h^o) = 0.$$
\end{lemma}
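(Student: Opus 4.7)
The plan is to directly expand both $\mathscr B_1(\bm q_h,y_h,\widehat y_h^o;\bm p_h,-z_h,-\widehat z_h^o)$ and $\mathscr B_2(\bm p_h,z_h,\widehat z_h^o;-\bm q_h,y_h,\widehat y_h^o)$ using the definitions in \eqref{def_B1}, collect terms by type (volume, $\bm q_h\cdot\bm n$ on $\partial\mathcal T_h$, $\bm p_h\cdot\bm n$ on $\partial\mathcal T_h$, $\tau$-stabilization, $\bm\beta$-convection, and trace-trace terms), and show that every group cancels. The top-level observation is that the symmetry of the test-function choices $(\bm p_h,-z_h,-\widehat z_h^o)$ vs.\ $(-\bm q_h,y_h,\widehat y_h^o)$ is tailored so that most terms cancel pairwise, and the remainder is handled by assumption \textbf{(A2)}.

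First I would note that the leading terms $(\bm q_h,\bm p_h)_{\mathcal T_h}$ and $(\bm p_h,-\bm q_h)_{\mathcal T_h}$ cancel. Next I would reduce the four volume terms involving divergences and gradients by element-wise integration by parts: the pair $-(y_h,\nabla\cdot\bm p_h)_{\mathcal T_h}$ and $-(\bm p_h,\nabla y_h)_{\mathcal T_h}$ combine to $-\langle\bm p_h\cdot\bm n,y_h\rangle_{\partial\mathcal T_h}$, and symmetrically $-(z_h,\nabla\cdot(-\bm q_h))_{\mathcal T_h}$ together with $(\bm q_h,\nabla z_h)_{\mathcal T_h}$ yield $\langle\bm q_h\cdot\bm n,z_h\rangle_{\partial\mathcal T_h}$. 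For the convection volume terms I would use the product rule together with $\nabla\cdot\bm\beta=0$ to get
\begin{equation*}
(\bm\beta y_h,\nabla z_h)_{\mathcal T_h}+(\bm\beta z_h,\nabla y_h)_{\mathcal T_h}=\langle\bm\beta\cdot\bm n\, y_h,z_h\rangle_{\partial\mathcal T_h}.
\end{equation*}

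After this reduction, the $\bm q_h\cdot\bm n$ and $\bm p_h\cdot\bm n$ contributions pair up and cancel exactly: each appears once from the volume integration by parts on $\partial\mathcal T_h$ and once from the numerical-trace lines in \eqref{def_B1} with the opposite sign, while the pieces on $\partial\mathcal T_h\setminus\varepsilon_h^\partial$ involving $\widehat y_h^o$ and $\widehat z_h^o$ from the $\langle\cdot,\bm r\cdot\bm n\rangle$ terms cancel with the matching entries from the $\mu$-lines. What remains lives entirely on the mesh skeleton and involves only $y_h,z_h,\widehat y_h^o,\widehat z_h^o,\tau_1,\tau_2$ and $\bm\beta\cdot\bm n$. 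At this point I would substitute \textbf{(A2)} in the form $\tau_1=\tau_2+\bm\beta\cdot\bm n$ (equivalently $\bm\beta\cdot\bm n-\tau_1=-\tau_2$ and $\bm\beta\cdot\bm n+\tau_2=\tau_1$). The mixed $\tau$-terms on $\partial\mathcal T_h$ combine to $-\langle\bm\beta\cdot\bm n\,y_h,z_h\rangle_{\partial\mathcal T_h}$, which kills the leftover convective boundary term from the volume step.

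The last remaining piece, on $\partial\mathcal T_h\setminus\varepsilon_h^\partial$, reduces after applying \textbf{(A2)} to $\langle\bm\beta\cdot\bm n\,\widehat y_h^o,\widehat z_h^o\rangle_{\partial\mathcal T_h\setminus\varepsilon_h^\partial}$, which vanishes by the single-valuedness of $\widehat y_h^o$ and $\widehat z_h^o$ across interior faces (exactly as in the last step of the proof of \Cref{property_B}). The only real obstacle is the bookkeeping: there are roughly a dozen boundary terms and the signs from the choices $(\bm p_h,-z_h,-\widehat z_h^o)$ and $(-\bm q_h,y_h,\widehat y_h^o)$ must be tracked meticulously, with careful attention to which terms live on $\partial\mathcal T_h$ and which on $\partial\mathcal T_h\setminus\varepsilon_h^\partial$. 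I would organize the computation in a table grouped by variable ($\bm q_h,\bm p_h,\tau_1,\tau_2,\bm\beta\cdot\bm n$) so that each cancellation is transparent, rather than trying to manipulate the full sum in one go.
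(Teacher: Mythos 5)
Your proposal is correct and follows essentially the same route as the paper: expand both operators with the given test functions, cancel the $(\bm q_h,\bm p_h)_{\mathcal T_h}$ terms and the $\bm q_h\cdot\bm n$, $\bm p_h\cdot\bm n$ pairings, integrate by parts elementwise (using $\nabla\cdot\bm\beta=0$ for the convective volume terms), and observe that every surviving skeleton term carries the factor $\tau_2+\bm\beta\cdot\bm n-\tau_1$, which vanishes by \textbf{(A2)}. Your explicit treatment of the residual $\langle\bm\beta\cdot\bm n\,\widehat y_h^o,\widehat z_h^o\rangle_{\partial\mathcal T_h\setminus\varepsilon_h^\partial}$ via single-valuedness of the traces is a point the paper leaves implicit in its displayed intermediate identity, but it is consistent with the paper's argument and the computation checks out.
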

\begin{proof}
	By the definition of $ \mathscr B_1 $  and $ \mathscr B_2 $,
	\begin{align*}
	\hspace{1em}&\hspace{-1em}  \mathscr B_1 (\bm q_h,y_h,\widehat y_h^o;\bm p_h,-z_h,-\widehat z_h^o) + \mathscr B_2 (\bm p_h,z_h,\widehat z_h^o;-\bm q_h,y_h,\widehat y_h^o)\\
	&=(\bm{q}_h, \bm p_h)_{{\mathcal{T}_h}}- (y_h, \nabla\cdot \bm p_h)_{{\mathcal{T}_h}}+\langle \widehat{y}_h^o, \bm p_h\cdot \bm{n} \rangle_{\partial{{\mathcal{T}_h}}\backslash {\varepsilon_h^{\partial}}} \\
	&\quad + (\bm{q}_h + \bm{\beta} y_h, \nabla z_h)_{{\mathcal{T}_h}}  - \langle\bm q_h\cdot\bm n +\tau_1 y_h , z_h \rangle_{\partial{{\mathcal{T}_h}}}  - \langle(\bm\beta\cdot \bm n-\tau_1)\widehat y_h^o, z_h \rangle_{\partial{{\mathcal{T}_h}}\backslash \varepsilon_h^{\partial}} \\
	&\quad+ \langle\bm q_h\cdot\bm n + \bm{\beta}\cdot\bm n \widehat y_h^o  +\tau_1 (y_h-\widehat y_h^o), \widehat z_h^o  \rangle_{\partial{{\mathcal{T}_h}}\backslash\varepsilon_h^{\partial}}\\
	&\quad-(\bm{p}_h, \bm q_h)_{{\mathcal{T}_h}}+ (z_h, \nabla\cdot \bm q_h)_{{\mathcal{T}_h}} -\langle \widehat{z}_h^o, \bm q_h \cdot \bm{n} \rangle_{\partial{{\mathcal{T}_h}}\backslash {\varepsilon_h^{\partial}}}\\
	&\quad - (\bm{p}_h - \bm{\beta} z_h, \nabla y_h)_{{\mathcal{T}_h}} +\langle\bm p_h\cdot\bm n +\tau_2 z_h , y_h \rangle_{\partial{{\mathcal{T}_h}}} - \langle (\bm{\beta}\cdot \bm n+\tau_2 ) \widehat z_h^o, y_h \rangle_{\partial{{\mathcal{T}_h}}\backslash \varepsilon_h^{\partial}}\\
	&\quad- \langle\bm p_h\cdot\bm n -\bm{\beta} \cdot\bm n\widehat z_h^o + \tau_2 (z_h-\widehat z_h^o), \widehat y_h^o \rangle_{\partial{{\mathcal{T}_h}}\backslash\varepsilon_h^{\partial}}.
	\end{align*}
	Integration by parts gives
	\begin{align*}
	\mathscr B_1 &(\bm q_h,y_h,\widehat y_h^o;\bm p_h,-z_h,-\widehat z_h^o) + \mathscr B_2 (\bm p_h,z_h,\widehat z_h^o;-\bm q_h,y_h,\widehat y_h^o)\\
	&=\langle (\tau_2 + \bm{\beta}\cdot\bm n-\tau_1) y_h, z_h \rangle_{\partial\mathcal T_h} + \langle (\tau_2 + \bm{\beta}\cdot\bm n-\tau_1) \widehat y_h^o, \widehat z_h^o \rangle_{\partial\mathcal T_h\backslash\varepsilon_h^\partial}.
	\end{align*}
	The proof is complete by assumption \textbf{(A2)}.
\end{proof}

\begin{proposition}\label{ex_uni}
	There exists a unique solution of the HDG equations \eqref{HDG_full_discrete}.
\end{proposition}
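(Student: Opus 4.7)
Proof proposal. The HDG equations \eqref{HDG_full_discrete} form a square linear system in finitely many unknowns, so the plan is to prove uniqueness, which then automatically gives existence. Concretely, I set $f=0$, $g=0$, $y_d=0$ and show that the only solution of the resulting homogeneous system is the trivial one.

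The first key step is to test the two state/adjoint equations against each other, exploiting the symmetry packaged in Lemma \ref{identical_equa}. Choose $(\bm r_1,w_1,\mu_1)=(\bm p_h,-z_h,-\widehat z_h^o)$ in \eqref{HDG_full_discrete_a} and $(\bm r_2,w_2,\mu_2)=(-\bm q_h,y_h,\widehat y_h^o)$ in \eqref{HDG_full_discrete_b}, and add. The right-hand side becomes $-(u_h,z_h)_{\mathcal T_h}-(y_h,y_h)_{\mathcal T_h}$, while the left-hand side is exactly the quantity shown to vanish in Lemma \ref{identical_equa}. Now feed the optimality condition \eqref{HDG_full_discrete_e} with $w_3=u_h$ to replace $(u_h,z_h)_{\mathcal T_h}$ by $\gamma(u_h,u_h)_{\mathcal T_h}$, yielding
\begin{equation*}
\gamma\|u_h\|_{\mathcal T_h}^2+\|y_h\|_{\mathcal T_h}^2=0.
\end{equation*}
Since $\gamma>0$, this forces $u_h=0$ and $y_h=0$. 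Then \eqref{HDG_full_discrete_e} with $w_3=z_h$ gives $z_h=0$.

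With $y_h$, $z_h$, $u_h$ all zero, the first equation reduces to $\mathscr B_1(\bm q_h,0,\widehat y_h^o;\bm r_1,w_1,\mu_1)=0$ for all admissible test triples. Testing with $(\bm r_1,w_1,\mu_1)=(\bm q_h,0,\widehat y_h^o)$ and invoking Lemma \ref{property_B} yields
\begin{equation*}
\|\bm q_h\|_{\mathcal T_h}^2+\bigl\langle(\tau_1-\tfrac12\bm\beta\cdot\bm n)\widehat y_h^o,\widehat y_h^o\bigr\rangle_{\partial\mathcal T_h\setminus\varepsilon_h^\partial}=0.
\end{equation*}
By assumption \textbf{(A3)} the boundary term is non-negative, so $\bm q_h=0$ and $\widehat y_h^o=0$ on all interior faces. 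The identical argument with the second identity of Lemma \ref{property_B} applied to $\mathscr B_2(\bm p_h,0,\widehat z_h^o;\bm p_h,0,\widehat z_h^o)=0$, using \eqref{eqn:tau1_condition}, gives $\bm p_h=0$ and $\widehat z_h^o=0$. Hence the homogeneous system has only the zero solution, and \eqref{HDG_full_discrete} is uniquely solvable.

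The only non-routine step is the first one: recognizing that Lemma \ref{identical_equa} is exactly the antisymmetric pairing that cancels the convection and coupling terms between the primal and adjoint discrete problems, leaving only the control penalty $\gamma\|u_h\|^2$ and the tracking term $\|y_h\|^2$. Once that observation is made, the rest is a clean application of the coercivity identity from Lemma \ref{property_B} and the sign assumption \textbf{(A3)}.
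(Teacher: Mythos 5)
Your proposal is correct and follows essentially the same route as the paper: reduce to uniqueness of the homogeneous finite-dimensional system, use the antisymmetric pairing of Lemma \ref{identical_equa} together with the optimality condition to force $u_h=y_h=z_h=0$, then apply the coercivity identity of Lemma \ref{property_B} with \textbf{(A2)}--\textbf{(A3)} to kill $\bm q_h$, $\bm p_h$, $\widehat y_h^o$, $\widehat z_h^o$. The only (immaterial) difference is your choice of test function in the optimality condition ($w_3=u_h$ and then $w_3=z_h$, rather than the paper's $w_3=z_h-\gamma u_h$).
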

\begin{proof}
	Since the system \eqref{HDG_full_discrete} is finite dimensional, we only need to prove the uniqueness.  Therefore, we assume $y_d = f =g= 0$ and we show the system \eqref{HDG_full_discrete} only has the trivial solution.
	
	Take $(\bm r_1,w_1,\mu_1) = (\bm p_h,-z_h,-\widehat z_h^o)$, $(\bm r_2,w_2,\mu_2) = (-\bm q_h,y_h,\widehat y_h^o)$, and $w_3 = z_h -\gamma u_h $ in the HDG equations \eqref{HDG_full_discrete_a},  \eqref{HDG_full_discrete_b}, and \eqref{HDG_full_discrete_e}, respectively, and sum to obtain
	\begin{align*}
	\hspace{3em}&\hspace{-3em} \mathscr B_1  (\bm q_h,y_h,\widehat y_h^o;\bm p_h,-z_h,-\widehat z_h^o) + \mathscr B_2 (\bm p_h,z_h,\widehat z_h^o;-\bm q_h,y_h,\widehat y_h^o) \\
	& =	\gamma (y_h,y_h)_{\mathcal T_h} +   (z_h,z_h)_{\mathcal T_h}% \\
	%&  = 0.
	\end{align*}
	Since $\gamma>0$, Lemma \ref{identical_equa} implies $y_h =  u_h = z_h= 0$.
	
	Next, take $(\bm r_1,w_1,\mu_1) = (\bm q_h,y_h,\widehat y_h^o)$ and $(\bm r_2,w_2,\mu_2) = (\bm p_h,z_h,\widehat z_h^o)$ in the HDG equations \eqref{HDG_full_discrete_a}-\eqref{HDG_full_discrete_b}.  Lemma \eqref{property_B} and \textbf{(A2)} and \textbf{(A3)} give $\bm q_h= \bm p_h= \bm 0 $ and $ \widehat y_h^o  = \widehat z_h^o=0$.
\end{proof}

\subsection{Proof of Main Result}
To prove the main result, we follow the strategy of our earlier work \cite{HuShenSinglerZhangZheng_HDG_Dirichlet_control1} and split the proof into
five steps.  We consider the following auxiliary problem: find $$({\bm{q}}_h(u),{\bm{p}}_h(u), y_h(u), z_h(u), {\widehat{y}}_h^o(u), {\widehat{z}}_h^o(u))\in \bm{V}_h\times\bm{V}_h\times W_h \times W_h\times M_h(o)\times M_h(o)$$ such that
\begin{subequations}\label{HDG_inter_u}
	\begin{align}
	\mathscr B_1(\bm q_h(u),y_h(u),\widehat{y}_h(u);\bm r_1, w_1,\mu_1)&=( f+ u,w_1)_{\mathcal T_h} \ \nonumber\\
	& \quad- \langle g, (\bm\beta\cdot\bm n-\tau_1)w_1+\bm r_1\cdot\bm n \rangle_{\varepsilon_h^\partial},\label{HDG_u_a} \\
	\mathscr B_2(\bm p_h(u),z_h(u),\widehat{z}_h(u);\bm r_2, w_2,\mu_2)&=(y_d-y_h(u), w_2)_{\mathcal T_h},\label{HDG_u_b}
	\end{align}
\end{subequations}
for all $\left(\bm{r}_1, \bm{r}_2,w_1,w_2,\mu_1,\mu_2\right)\in \bm{V}_h\times\bm{V}_h \times W_h\times W_h\times M_h(o)\times M_h(o)$.  We begin by bounding the error between the solutions of the auxiliary problem and the mixed form \eqref{mixed_a}-\eqref{mixed_d} of the optimality system.

\subsubsection{Step 1: The error estimates for $\norm{ q- q_h(u)}_{\mathcal T_h}$ and $\norm {y-y_h(u)}_{\mathcal T_h}$.} \label{subsec:proof_step1}

The auxiliary HDG equation \eqref{HDG_u_a} is precisely the standard HDG discretization of the convection diffusion PDE \eqref{eq_adeq_a}-\eqref{eq_adeq_b} for $ y $ since the exact optimal control $ u $ is fixed in \eqref{HDG_u_a}.  The HDG error estimates for this problem have already been obtained in \cite{MR2991828}:
%
%  Therefore, the HDG error estimates for $\norm{\bm q-\bm q_h(u)}_{\mathcal T_h}$ and $\norm {y-y_h(u)}_{\mathcal T_h}$ have already been performed by standard HDG method for boundary value problem, see \cite{MR2991828}, and we state it in the following lemma.
\begin{lemma}[\cite{MR2991828}]\label{le}
	If conditions {\bf{(A1)}} and {\bf{(A2)}} hold, we have
	\begin{align}\label{err_y_yhu}
	\|y-y_h(u)\|_{\mathcal T_h}+\|\bm q-\bm q_h(u)\|_{\mathcal T_h}\le Ch^{k+1}(|\bm q|_{k+1}+|y|_{k+1}).
	\end{align}
\end{lemma}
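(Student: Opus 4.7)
The plan is a projection-based energy argument, followed by an Aubin--Nitsche duality step to sharpen the $L^2$ bound on $y$. Since \eqref{HDG_u_a} is nothing more than the standard HDG discretization of \eqref{eq_adeq_a}--\eqref{eq_adeq_b} with the fixed source $f+u$, all the required ingredients --- the HDG projection $(\bm{\Pi}_V, \Pi_W)$ of \eqref{projection_operator}, its approximation bound in \Cref{pro_error}, and the coercivity identity of \Cref{property_B} --- are already at hand.

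I would first introduce the projected errors
\[
\bm\delta^{\bm q} := \bm\Pi_V \bm q - \bm q_h(u), \qquad \delta^y := \Pi_W y - y_h(u), \qquad \delta^{\widehat y} := P_M y - \widehat{y}_h^o(u),
\]
so that, by the triangle inequality and \Cref{pro_error}, it suffices to bound $\|\bm\delta^{\bm q}\|_{\mathcal T_h}$ and $\|\delta^y\|_{\mathcal T_h}$ by $C h^{k+1}(|\bm q|_{k+1} + |y|_{k+1})$. Since the continuous solution satisfies the mixed system, subtracting \eqref{HDG_u_a} from the corresponding continuous identity produces an error equation
\[
\mathscr B_1(\bm\delta^{\bm q}, \delta^y, \delta^{\widehat y}; \bm r_1, w_1, \mu_1) = \mathcal R(\bm r_1, w_1, \mu_1),
\]
in which $\mathcal R$ collects residuals of the form $(\bm q - \bm\Pi_V \bm q, \nabla w_1)_{\mathcal T_h}$, $(\bm \beta(y - \Pi_W y), \nabla w_1)_{\mathcal T_h}$, and analogous boundary terms. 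The defining relations \eqref{projection_operator_a}--\eqref{projection_operator_c} were tailored precisely so that these contributions vanish on $\bm V_h \times W_h \times M_h(o)$ --- in particular the $\bm\beta$-coupling built into \eqref{projection_operator_a} and \eqref{projection_operator_c} is what annihilates the convection residuals --- leaving only innocuous boundary pieces controlled by \Cref{pro_error}.

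Next I would test \Cref{property_B} with $(\bm\delta^{\bm q}, \delta^y, \delta^{\widehat y})$. Under \textbf{(A1)} and \textbf{(A3)} the boundary quadratic form is nonnegative, yielding
\[
\|\bm\delta^{\bm q}\|_{\mathcal T_h}^2 + \langle (\tau_1 - \tfrac12 \bm\beta\cdot\bm n)(\delta^y - \delta^{\widehat y}), \delta^y - \delta^{\widehat y}\rangle_{\partial \mathcal T_h \setminus \varepsilon_h^\partial} + \langle (\tau_1 - \tfrac12 \bm\beta\cdot\bm n)\delta^y, \delta^y\rangle_{\varepsilon_h^\partial} \lesssim \mathcal R(\bm\delta^{\bm q}, \delta^y, \delta^{\widehat y}).
\]
Estimating the right-hand side by Cauchy--Schwarz, the discrete trace inequality, and \Cref{pro_error} gives $\|\bm\delta^{\bm q}\|_{\mathcal T_h} \lesssim h^{k+1}(|\bm q|_{k+1} + |y|_{k+1})$, and the flux estimate in \eqref{err_y_yhu} then follows by the triangle inequality with \Cref{pro_error}. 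The assumption \textbf{(A1)} enters here to ensure that $\tau_1$ commutes with the elementwise projections without producing uncontrolled face terms.

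To upgrade from the suboptimal bound on $\|\delta^y\|_{\mathcal T_h}$ that this energy estimate produces to the sharp rate $O(h^{k+1})$, I would invoke an Aubin--Nitsche duality argument. Introducing the adjoint problem $-\Delta\Phi - \nabla\cdot(\bm\beta\Phi) = \delta^y$ in $\Omega$ with $\Phi = 0$ on $\partial\Omega$, elliptic regularity on the convex polyhedron gives $\|\Phi\|_2 \lesssim \|\delta^y\|_{\mathcal T_h}$. One then pairs the continuous adjoint system against $(\bm\delta^{\bm q}, \delta^y, \delta^{\widehat y})$ using the \emph{adjoint} HDG projection $(\widetilde{\bm\Pi}_V, \widetilde\Pi_W)$ of \eqref{projection_operator1} and exploits Galerkin orthogonality plus \Cref{pro_error1} to recover an extra factor of $h$. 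The main obstacle is precisely this duality step: the convection term couples the primal and dual problems through their numerical fluxes, and it is assumption \textbf{(A2)} --- together with \Cref{tau_lem} to absorb the $O(h)$ mismatch between $\tau_2$ and $\widetilde\tau_2$ --- that realigns the two projections so that the face pairings cancel cleanly, in the same spirit as \Cref{identical_equa}.
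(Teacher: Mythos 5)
The paper does not actually prove this lemma --- it is imported verbatim from \cite{MR2991828}, since \eqref{HDG_u_a} is just the standard HDG discretization of the state equation \eqref{eq_adeq_a}--\eqref{eq_adeq_b} with the exact control as fixed data. Your outline (projection-based splitting, annihilation of the convection residuals by the tailored projection \eqref{projection_operator}, the energy identity of \Cref{property_B} under \textbf{(A3)}, then an Aubin--Nitsche duality step using the adjoint projection, with \textbf{(A2)} aligning the primal and dual stabilizations) is precisely the argument of that reference, so it is correct and takes essentially the same approach as the proof the paper relies on.
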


\subsubsection{Step 2: The error equation for part 2 of the auxiliary problem \eqref{HDG_u_b}.} \label{subsec:proof_step2}

Next, we bound the error between the solution of the dual convection diffusion equation \eqref{eq_adeq_c}-\eqref{eq_adeq_d} for $ z $ and the auxiliary HDG equation \eqref{HDG_u_b}.  We split the errors in the variables using the HDG  projections.  Define
\begin{equation}\label{notation_1}
\begin{split}
\delta^{\bm p} &=\bm p- \widetilde{\bm\Pi}_V\bm p,  \qquad\qquad\qquad \qquad\qquad\qquad\;\;\;\;\varepsilon^{\bm p}_h=\widetilde{\bm\Pi}_V \bm p-\bm p_h(u),\\
\delta^z&=z- \widetilde{\Pi}_W z, \qquad\qquad\qquad \qquad\qquad\qquad\;\;\;\; \;\varepsilon^{z}_h=\widetilde{\Pi}_W z-z_h(u),\\
\delta^{\widehat z} &= z-P_Mz,  \qquad\qquad\qquad\qquad\qquad\qquad\quad\;\; \varepsilon^{\widehat z}_h=P_M z-\widehat{z}_h(u),\\
\widehat {\bm\delta}_2 &= \delta^{\bm p}\cdot\bm n+\tau  \delta^z,  \qquad\qquad\qquad\qquad\quad\quad\quad\quad
\widehat {\bm \varepsilon }_2= \varepsilon_h^{\bm p}\cdot\bm n+\tau (\varepsilon^z_h-\varepsilon_h^{\widehat z}).
\end{split}
\end{equation}
where $\widehat z_h(u) = \widehat z_h^o(u)$ on $\varepsilon_h^o$ and $\widehat z_h(u) = 0$ on $\varepsilon_h^{\partial}$.  This gives $\varepsilon_h^{\widehat z} = 0$ on $\varepsilon_h^{\partial}$.

\begin{lemma}\label{lemma:step1_first_lemma}
	We have
	\begin{align}\label{error_z}
	\hspace{3em}&\hspace{-3em} \mathscr B_2(\varepsilon^{\bm p}_h,\varepsilon^z_h,\varepsilon^{\widehat{z}}_h;\bm r_2, w_2,\mu_2) \ \nonumber\\
	&=(\delta^{\bm p},\bm r_2)_{\mathcal T_h}+(y_h(u)- y, w_2)_{\mathcal T_h}+  \langle (\tau_2 - \widetilde \tau_2)\delta^{\widehat z},w_2 - \mu_2\rangle_{\partial\mathcal T_h}.
	\end{align}
\end{lemma}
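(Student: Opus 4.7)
The plan is to derive the error equation by expanding $\mathscr B_2$ on the projection errors, using the defining identities of $\widetilde{\bm\Pi}_V$, $\widetilde{\Pi}_W$, and $P_M$ together with the auxiliary HDG equation \eqref{HDG_u_b} and the mixed weak form \eqref{mixed_c}--\eqref{mixed_d} of the dual PDE.

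First, I would exploit linearity of $\mathscr B_2$ in its first three arguments to write
\begin{align*}
\mathscr B_2(\varepsilon^{\bm p}_h,\varepsilon^z_h,\varepsilon^{\widehat z}_h;\bm r_2,w_2,\mu_2)
&= \mathscr B_2(\widetilde{\bm\Pi}_V\bm p,\widetilde{\Pi}_W z,P_M z;\bm r_2,w_2,\mu_2) \\
&\quad - \mathscr B_2(\bm p_h(u),z_h(u),\widehat z_h(u);\bm r_2,w_2,\mu_2).
\end{align*}
The second piece is already handled by \eqref{HDG_u_b} and equals $-(y_d-y_h(u),w_2)_{\mathcal T_h}$. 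So the real work is to reduce the first piece to $(\bm p,\bm r_2)_{\mathcal T_h}-(\delta^{\bm p},\bm r_2)_{\mathcal T_h}-\cdots$ and match it against the continuous problem.

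Next I would expand $\mathscr B_2(\widetilde{\bm\Pi}_V\bm p,\widetilde{\Pi}_W z,P_M z;\bm r_2,w_2,\mu_2)$ term by term using the definition \eqref{def_B1}. For the volume terms, I would use \eqref{projection_operator1_a} to move $\widetilde{\bm\Pi}_V\bm p-\bm\beta\widetilde{\Pi}_W z$ onto $\bm p-\bm\beta z$ against polynomial test functions $\nabla w_2\in\bm{\mathcal P}_{k-1}(K)^d$ and $\bm r_2$, and use \eqref{projection_operator1_b} with $\nabla\cdot\bm r_2\in\mathcal P_{k-1}(K)$ to replace $\widetilde{\Pi}_W z$ by $z$. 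For the boundary terms, I would apply \eqref{projection_operator1_c} against $w_2|_e$ and $\mu_2|_e$ (both in $\mathcal P_k(e)$) to rewrite $\widetilde{\bm\Pi}_V\bm p\cdot\bm n+\tau_2\widetilde{\Pi}_W z$ as $\bm p\cdot\bm n+\tau_2 z + \bm\beta\cdot\bm n(P_M z - z)$, together with the elementary identity $\langle P_M z-z,\mu\rangle_e=0$ for $\mu\in M_h$ to handle the $\widehat z$-trace term. After this substitution the resulting expression equals $\mathscr B_2(\bm p,z,z;\bm r_2,w_2,\mu_2)$ plus a $\delta^{\bm p}$ correction in the volume and a boundary correction involving $\bm\beta\cdot\bm n\,\delta^{\widehat z}$ paired against $w_2$ and $\mu_2$.

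Then I would invoke the continuous equations \eqref{mixed_c}--\eqref{mixed_d}, in particular integrating $-(\bm\beta z,\nabla w_2)_{\mathcal T_h}$ by parts and using $\nabla\cdot(\bm p-\bm\beta z)=y_d-y$, to collapse $\mathscr B_2(\bm p,z,z;\bm r_2,w_2,\mu_2)$ into $(y_d-y,w_2)_{\mathcal T_h}$. Combining with the $-(y_d-y_h(u),w_2)_{\mathcal T_h}$ contribution from step one produces the $(y_h(u)-y,w_2)_{\mathcal T_h}$ term of the claim, and the $\delta^{\bm p}$ volume residual gives the $(\delta^{\bm p},\bm r_2)_{\mathcal T_h}$ term. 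The main obstacle, as usual in HDG analysis of convection--diffusion, is the boundary bookkeeping: the leftover boundary term involves $\bm\beta\cdot\bm n\,\delta^{\widehat z}$, and to arrive at the stated form $\langle(\tau_2-\widetilde\tau_2)\delta^{\widehat z},w_2-\mu_2\rangle_{\partial\mathcal T_h}$ I would rewrite $\bm\beta\cdot\bm n$ via the piecewise-constant approximation $\bm P_0\bm\beta\cdot\bm n$ (whose contribution vanishes against $P_M$-orthogonality since $\bm P_0\bm\beta\cdot\bm n\,\delta^{\widehat z}$ pairs against $M_h$-functions $w_2-\mu_2$ restricted to edges), using Lemma \ref{tau_lem} to identify the residue $\bm\beta\cdot\bm n-\bm P_0\bm\beta\cdot\bm n=\widetilde\tau_2-\tau_2$. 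Assembling the interior and boundary contributions, together with $\varepsilon^{\widehat z}_h=0$ and $\widehat z(\cdot)=0$ on $\varepsilon_h^\partial$, then yields \eqref{error_z}.
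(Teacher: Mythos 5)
Your proposal is correct and follows essentially the same route as the paper's proof: expand $\mathscr B_2(\widetilde{\bm\Pi}_V\bm p,\widetilde{\Pi}_W z,P_Mz;\cdot)$, invoke the projection identities \eqref{projection_operator1_a}--\eqref{projection_operator1_c} and \eqref{P_M}, use consistency with the continuous dual equations, and subtract the auxiliary equation \eqref{HDG_u_b}. Your boundary bookkeeping via $\bm P_0\bm\beta$ is equivalent to the paper's: the net leftover term is $\langle\tau_2\,\delta^{\widehat z},w_2-\mu_2\rangle_{\partial\mathcal T_h}$, which coincides with $-\langle\bm\beta\cdot\bm n\,\delta^{\widehat z},w_2-\mu_2\rangle_{\partial\mathcal T_h}$ because the piecewise-constant $\tau_1$ is annihilated by $P_M$-orthogonality, and both reductions (subtracting $\widetilde\tau_2$ in the paper, subtracting $\bm P_0\bm\beta\cdot\bm n$ in yours) land on $\langle(\tau_2-\widetilde\tau_2)\delta^{\widehat z},w_2-\mu_2\rangle_{\partial\mathcal T_h}$.
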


\begin{proof}
	By the definition of operator $\mathscr B_2$ \eqref{def_B1}, we have
	\begin{align*}
	\hspace{3em}&\hspace{-3em} \mathscr B_2 (\widetilde{\bm \Pi}_V\bm p,\widetilde{\Pi}_W z,P_Mz;\bm r_2,w_2,\mu_2) \\
	& =(\widetilde{\bm \Pi}_V\bm p,\bm r_2)_{\mathcal T_h}-(\widetilde{\Pi}_W z,\nabla\cdot\bm r_2)_{\mathcal T_h}+\langle P_M z,\bm r_2\cdot\bm n\rangle_{\partial\mathcal T_h\backslash\varepsilon_h^\partial}\\
	&\quad-(\widetilde{\bm \Pi}_V\bm p -\bm \beta \widetilde{\Pi}_W z,  \nabla w_2)_{\mathcal T_h} +\langle \widetilde{\bm \Pi}_V\bm p \cdot\bm n +\tau_2  \widetilde{\Pi}_W z ,w_2\rangle_{\partial\mathcal T_h}\nonumber\\
	&\quad-\langle (\bm \beta\cdot\bm n + \tau_2)P_Mz ,w_2\rangle_{\partial\mathcal T_h\backslash\varepsilon_h^\partial}\\
	&\quad-\langle  \widetilde{\bm \Pi}_V\bm p \cdot\bm n-\bm \beta\cdot\bm nP_Mz+\tau_2( \widetilde{\Pi}_W z -P_Mz ),\mu_2\rangle_{\partial\mathcal T_h\backslash\varepsilon^{\partial}_h}.
	\end{align*}
	By properties of the HDG projections $\widetilde {\bm \Pi}_V$ and $\widetilde \Pi_W$ in \eqref{projection_operator1_c} and the $L^2 $ projection $P_M$ in \eqref{P_M}, we have
	\begin{align*}
	\langle\widetilde{\bm\Pi}_V\bm p\cdot\bm n+\tau_2\widetilde{\Pi}_Wz,w_2\rangle_{\partial\mathcal T_h}=\langle\bm p\cdot\bm n +\bm\beta\cdot\bm nP_Mz -\bm\beta\cdot\bm nz+\tau_2 z,w_2\rangle_{\partial\mathcal T_h},\\
	\langle\widetilde{\bm\Pi}_V\bm p\cdot\bm n-\bm\beta\cdot\bm nP_Mz+\tau_2\widetilde{\Pi}_Wz,\mu\rangle_{\partial \mathcal T_h\backslash \varepsilon_h^\partial}=\langle\bm p\cdot\bm n-\bm\beta\cdot\bm nz+\tau_2 z,\mu\rangle_{\partial \mathcal T_h\backslash \varepsilon_h^\partial}.
	\end{align*}
	By \eqref{projection_operator1_a}-\eqref{projection_operator1_b}, we have
	\begin{align*}
	\hspace{1em}&\hspace{-1em}  \mathscr B_2 (\widetilde{\bm \Pi}_V\bm p,\widetilde{\Pi}_W z,P_Mz;\bm r_2,w_2,\mu_2)\\
	& = (\bm p,\bm r_2)_{\mathcal T_h} - (\delta^{\bm p},\bm r_2)_{\mathcal T_h}-(z,\nabla\cdot\bm r_2)_{\mathcal T_h}+\langle  z,\bm r_2\cdot\bm n\rangle_{\partial\mathcal T_h\backslash\varepsilon_h^\partial}\\
	&\quad-(\bm p -\bm \beta z,  \nabla w_2)_{\mathcal T_h} +\langle \bm p\cdot\bm n  -\bm\beta\cdot\bm nz ,w_2\rangle_{\partial\mathcal T_h} + \langle \bm\beta\cdot\bm nP_Mz + \tau_2 z ,w_2\rangle_{\partial\mathcal T_h}\\
	&\quad-\langle (\bm \beta\cdot\bm n + \tau_2)P_Mz ,w_2\rangle_{\partial\mathcal T_h\backslash\varepsilon_h^\partial}
	-\langle  \bm p\cdot\bm n-\bm\beta\cdot\bm nz ,\mu_2\rangle_{\partial\mathcal T_h\backslash\varepsilon^{\partial}_h}\\
	& \quad- \langle  \tau_2 z -\tau_2 P_Mz ,\mu_2\rangle_{\partial\mathcal T_h\backslash\varepsilon^{\partial}_h}.
	\end{align*}
	Note  that the exact solution $\bm p$ and $z$ satisfies
	\begin{align*}
	(\bm p,\bm r_2)_{\mathcal T_h}-(z,\nabla\cdot\bm r_2)_{\mathcal T_h}+\langle  z,\bm r_2\cdot\bm n\rangle_{\partial\mathcal T_h}&=0,\\
	-(\bm p-\bm \beta z, \nabla w_2)_{\mathcal T_h}+\langle {\bm p}\cdot\bm n-\bm \beta\cdot\bm n z,w_2\rangle_{\partial\mathcal T_h}&= (y_d- y, w_2)_{\mathcal T_h},\\
	\langle {\bm p}\cdot\bm n-\bm \beta\cdot\bm n z,\mu_2\rangle_{\partial\mathcal T_h\backslash\varepsilon^{\partial}_h}&=0,
	\end{align*}
	for all $\left(\bm{r}_2, w_2,\mu_2\right)\in \bm V_h\times W_h\times M_h(o)$.  Since $z=0$ on $\varepsilon_h^\partial$, we have
	\begin{align*}
	\hspace{3em}&\hspace{-3em}  \mathscr B_2 (\widetilde{\bm \Pi}_V\bm p,\widetilde{\Pi}_W z,P_Mz;\bm r_2,w_2,\mu_2) \\
	& = - (\delta^{\bm p},\bm r_2)_{\mathcal T_h} + (y_d - y,w_2)_{\mathcal T_h} + \langle  \tau_2\delta^{\widehat z} ,w_2-\mu_2\rangle_{\partial\mathcal T_h}.
	\end{align*}
	By the definition of $P_M$ in \eqref{P_M} and since $\widetilde \tau_2$ from Lemma \ref{tau_lem} is piecewise constant on $\partial \mathcal T_h$, we have
	\begin{align*}
	\langle \tau_2 \delta^{\widehat z}, w_2-\mu_2\rangle_{\partial \mathcal T_h} &= \langle (\tau_2 - \widetilde \tau_2)\delta^{\widehat z}, w_2-\mu_2\rangle_{\partial \mathcal T_h}.
	\end{align*}
	This gives
	\begin{align*}
	\hspace{3em}&\hspace{-3em}  \mathscr B_2 (\widetilde{\bm \Pi}_V\bm p,\widetilde{\Pi}_W z,P_Mz;\bm r_2,w_2,\mu_2) \\
	& = - (\delta^{\bm p},\bm r_2)_{\mathcal T_h} + (y_d - y,w_2)_{\mathcal T_h} + \langle   (\tau_2 - \widetilde \tau_2) \delta^{\widehat z} ,w_2 - \mu_2\rangle_{\partial\mathcal T_h}.
	\end{align*}
	Subtract part 2 of the auxiliary problem \eqref{HDG_u_b} from the above equality to obtain the result.
\end{proof}

\subsubsection{Step 3: Estimates for $\varepsilon_h^p$ and $\varepsilon_h^z$ by an energy and duality argument.} \label{subsec:proof_step3}

\begin{lemma}\label{e_sec}
	We have
	\begin{align}
	\|\varepsilon_h^{\bm p}\|_{\mathcal T_h}&+\|\varepsilon_h^z-\varepsilon_h^{\widehat z}\|_{\partial\mathcal T_h} \le  \mathbb E + \kappa  \| \varepsilon^z_h \|_{\mathcal T_h},\label{error_es_p}
	\end{align}
	where
	\begin{align*}
	\mathbb E  =   C\|  \delta^{\bm p}  \|_{\mathcal T_h}   + \frac  C {\kappa} \| y_h(u) - y \|_{\mathcal T_h} +C\| \tau_2 - \widetilde \tau_2\|_{0, \infty, \partial\mathcal T_h} \| \delta^{\widehat z}\|_{\partial \mathcal T_h}
	\end{align*}
	and $ \kappa $ is any positive constant and $ C $ does not depend on $ \kappa $.
\end{lemma}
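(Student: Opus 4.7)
The plan is to obtain the bound through a standard energy argument: test the error equation from Lemma \ref{lemma:step1_first_lemma} with the errors themselves and exploit the coercivity identity from Lemma \ref{property_B}. Specifically, I would set $(\bm r_2, w_2, \mu_2) = (\varepsilon_h^{\bm p}, \varepsilon_h^z, \varepsilon_h^{\widehat z})$ in \eqref{error_z}. By the second identity of Lemma \ref{property_B}, the left-hand side becomes $\|\varepsilon_h^{\bm p}\|_{\mathcal T_h}^2$ plus a jump contribution weighted by $\tau_2 + \tfrac12 \bm\beta\cdot\bm n$ on $\partial\mathcal T_h\backslash\varepsilon_h^\partial$ plus a boundary contribution weighted by the same factor on $\varepsilon_h^\partial$. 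Since $\varepsilon_h^{\widehat z}$ is defined to vanish on $\varepsilon_h^\partial$, these two boundary contributions glue together into an equivalent squared norm of $\|\varepsilon_h^z - \varepsilon_h^{\widehat z}\|_{\partial\mathcal T_h}$; the coercivity constant is strictly positive thanks to \eqref{eqn:tau1_condition}, which follows from assumptions \textbf{(A2)} and \textbf{(A3)}.

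Next I would bound each of the three terms on the right-hand side of \eqref{error_z}. The first term $(\delta^{\bm p}, \varepsilon_h^{\bm p})_{\mathcal T_h}$ is handled by Cauchy--Schwarz and absorbed into $\|\varepsilon_h^{\bm p}\|_{\mathcal T_h}^2$ on the left via Young's inequality. The third term is bounded by $\|\tau_2 - \widetilde\tau_2\|_{0,\infty,\partial\mathcal T_h}\|\delta^{\widehat z}\|_{\partial \mathcal T_h} \|\varepsilon_h^z - \varepsilon_h^{\widehat z}\|_{\partial \mathcal T_h}$ and absorbed into $\|\varepsilon_h^z - \varepsilon_h^{\widehat z}\|_{\partial\mathcal T_h}^2$ on the left via Young again. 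The genuinely obstructive term is the middle one, $(y_h(u) - y, \varepsilon_h^z)_{\mathcal T_h}$: the energy identity produces no interior $L^2$ coercivity for $\varepsilon_h^z$, so $\|\varepsilon_h^z\|_{\mathcal T_h}$ cannot be absorbed at this stage. I would therefore split it as $\tfrac{1}{\kappa}\|y_h(u)-y\|_{\mathcal T_h} \cdot \kappa\|\varepsilon_h^z\|_{\mathcal T_h}$ and apply Young's inequality to get $\tfrac{C}{\kappa^2}\|y_h(u)-y\|_{\mathcal T_h}^2 + \kappa^2 \|\varepsilon_h^z\|_{\mathcal T_h}^2$, keeping the second piece on the right.

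After combining, the inequality reads $\|\varepsilon_h^{\bm p}\|_{\mathcal T_h}^2 + \|\varepsilon_h^z - \varepsilon_h^{\widehat z}\|_{\partial\mathcal T_h}^2 \lesssim \mathbb E^2 + \kappa^2\|\varepsilon_h^z\|_{\mathcal T_h}^2$; taking square roots and using $\sqrt{a^2+b^2} \le a + b$ produces the additive form \eqref{error_es_p}. The main obstacle is not algebraic but structural: the HDG coercivity yields only an $L^2$-in-$\bm p$ plus boundary-jump-in-$z$ estimate and offers no control over $\|\varepsilon_h^z\|_{\mathcal T_h}$. The free parameter $\kappa$ is introduced precisely to postpone this deficiency; it will be removed in the subsequent Aubin--Nitsche style duality argument, where $\|\varepsilon_h^z\|_{\mathcal T_h}$ is estimated via a dual problem and $\kappa$ is then chosen small enough so that the resulting $\kappa\|\varepsilon_h^z\|_{\mathcal T_h}$ term can be kicked back to the left. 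Careful bookkeeping ensures $C$ is independent of $\kappa$.
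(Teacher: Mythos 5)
Your proposal is correct and follows essentially the same route as the paper: test the error equation of Lemma \ref{lemma:step1_first_lemma} with $(\varepsilon_h^{\bm p},\varepsilon_h^z,\varepsilon_h^{\widehat z})$, invoke the coercivity identity of Lemma \ref{property_B} (with the boundary terms merging since $\varepsilon_h^{\widehat z}=0$ on $\varepsilon_h^\partial$), bound the three right-hand terms by Cauchy--Schwarz, and carry the uncontrolled $\kappa\|\varepsilon_h^z\|_{\mathcal T_h}$ term forward to the duality step. Your bookkeeping with $\kappa$ reproduces the stated form of $\mathbb E$, so no changes are needed.
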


\begin{proof}
	Taking $(\bm r_2,w_2,\mu_2) = (\varepsilon^{\bm p}_h,\varepsilon^z_h,\varepsilon^{\widehat z}_h)$ in \eqref{error_z} in Lemma \ref{lemma:step1_first_lemma} gives
	\begin{align*}
	\hspace{1em}&\hspace{-1em}  \mathscr B_2 ( \varepsilon^{ \bm p}_h,  \varepsilon^z_h, \varepsilon^{\widehat z}_h;\varepsilon^{\bm p}_h, \varepsilon^z_h, \varepsilon^{\widehat z}_h )\\
	& = (\delta^{\bm p},\varepsilon^{\bm p}_h)_{\mathcal T_h}+(y_h(u)- y, \varepsilon^z_h)_{\mathcal T_h}+ \langle (\tau_2 - \widetilde \tau_2)\delta^{\widehat z},\varepsilon_h^z-\varepsilon_h^{\widehat z}\rangle_{\partial\mathcal T_h}\\
	&\le \|  \delta^{\bm p}  \|_{\mathcal T_h}  \| \varepsilon_ h^ {\bm p}  \|_{\mathcal T_h}  + \| y_h(u) - y \|_{\mathcal T_h} \| \varepsilon^z_h \|_{\mathcal T_h} \\
	&\quad+\| \tau_2 - \widetilde \tau_2\|_{0, \infty, \partial\mathcal T_h} \| \delta^{\widehat z}\|_{\partial \mathcal T_h} \|\varepsilon_h^z-\varepsilon_h^{\widehat z}\|_{\partial\mathcal T_h}.
	\end{align*}
	Lemma \eqref{property_B} gives
	\begin{align*}
	\|\varepsilon_h^{\bm p}\|_{\mathcal T_h}&+\|\varepsilon_h^z-\varepsilon_h^{\widehat z}\|_{\partial\mathcal T_h}\nonumber\\
	&\le  C\|  \delta^{\bm p}  \|_{\mathcal T_h}   + \frac  C {\kappa} \| y_h(u) - y \|_{\mathcal T_h} +C\| \tau_2 - \widetilde \tau_2\|_{0, \infty, \partial\mathcal T_h} \| \delta^{\widehat z}\|_{\partial \mathcal T_h} + \kappa  \| \varepsilon^z_h \|_{\mathcal T_h},
	\end{align*}
	where $\kappa$ is any positive constant.
\end{proof}

Next, we introduce the dual problem for any given $\Theta$ in $L^2(\Omega):$
\begin{equation}\label{Dual_PDE}
\begin{split}
\bm\Phi-\nabla\Psi&=0\qquad~\text{in}\ \Omega,\\
-\nabla\cdot\bm{\Phi}+\bm\beta\cdot\nabla\Psi&=\Theta \qquad\text{in}\ \Omega,\\
\Psi&=0\qquad~\text{on}\ \partial\Omega.
\end{split}
\end{equation}
Since the domain $\Omega$ is convex, we have the following regularity estimate
\begin{align}\label{dual_esti}
\norm{\bm \Phi}_{1,\Omega} + \norm{\Psi}_{2,\Omega} \le C_{\text{reg}} \norm{\Theta}_\Omega,
\end{align}

Before we estimate  $\varepsilon_h^{\bm p}$ and $\varepsilon_h^z$,  we introduce the following notation, which is similar to the earlier notation in \eqref{notation_1}:
\begin{align}
\delta^{\bm \Phi} &=\bm \Phi-\widetilde{\bm\Pi}_V\bm \Phi, \quad \delta^\Psi=\Psi- \widetilde{\Pi}_W \Psi, \quad
\delta^{\widehat \Psi} = \Psi-P_M\Psi.
\end{align}
\begin{lemma}
	We have
	\begin{subequations}
		\begin{align}
		\norm{\varepsilon_h^{\bm p}}_{\mathcal T_h}	&\lesssim h^{k+1}(|\bm q|_{k+1}+|y|_{k+1}+|\bm p|_{k+1}+|z|_{k+1}),\label{var_p}\\
		\|\varepsilon^z_h\|_{\mathcal T_h} &\lesssim h^{k+1}(|\bm q|_{k+1}+|y|_{k+1}+|\bm p|_{k+1}+|z|_{k+1}).\label{var_z}
		\end{align}
	\end{subequations}
\end{lemma}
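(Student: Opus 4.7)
The plan is to combine the energy estimate from \Cref{e_sec} with an Aubin--Nitsche duality argument applied to the dual problem \eqref{Dual_PDE}. The first observation is that every term in $\mathbb{E}$ is already of order $h^{k+1}$: by \Cref{pro_error1} we have $\|\delta^{\bm p}\|_{\mathcal T_h}\lesssim h^{k+1}(|\bm p|_{k+1}+|z|_{k+1})$; by \Cref{le} we have $\|y_h(u)-y\|_{\mathcal T_h}\lesssim h^{k+1}(|\bm q|_{k+1}+|y|_{k+1})$; and by \Cref{tau_lem} together with the standard trace/approximation estimate $\|\delta^{\widehat z}\|_{\partial\mathcal T_h}\lesssim h^{k+1/2}|z|_{k+1}$ the last term is $O(h^{k+3/2})$, which is higher order. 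Therefore $\mathbb{E}\lesssim h^{k+1}(|\bm q|_{k+1}+|y|_{k+1}+|\bm p|_{k+1}+|z|_{k+1})$, and it remains to handle the $\kappa\|\varepsilon_h^z\|_{\mathcal T_h}$ term on the right of \eqref{error_es_p}.

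For this I would take $\Theta=\varepsilon_h^z$ in the dual problem \eqref{Dual_PDE}, so that by the regularity estimate \eqref{dual_esti} we have $\|\bm\Phi\|_{1,\Omega}+\|\Psi\|_{2,\Omega}\le C_{\text{reg}}\|\varepsilon_h^z\|_{\mathcal T_h}$. The identity $\|\varepsilon_h^z\|_{\mathcal T_h}^2=(\varepsilon_h^z,-\nabla\cdot\bm\Phi+\bm\beta\cdot\nabla\Psi)_{\mathcal T_h}$ is then rewritten, via element-wise integration by parts and using that $\Psi=0$ on $\partial\Omega$ and that $\bm\Phi\cdot\bm n$, $\Psi$ are single-valued across interior faces, into a combination of terms that can be matched against the error equation \eqref{error_z}. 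Concretely, I would test \eqref{error_z} with $(\bm r_2,w_2,\mu_2)=(\bm\Pi_V\bm\Phi,\Pi_W\Psi,P_M\Psi)$ — note that $\bm\Pi_V,\Pi_W$ (not the tilde versions) are the right projections for the dual equation, since its convective structure is opposite to that of the $z$-equation — and then symmetrize using \Cref{identical_equa} to trade $\mathscr B_2(\varepsilon_h^{\bm p},\varepsilon_h^z,\varepsilon_h^{\widehat z};\bm\Pi_V\bm\Phi,\Pi_W\Psi,P_M\Psi)$ for an expression of the form $\mathscr B_1(\bm\Pi_V\bm\Phi,\Pi_W\Psi,P_M\Psi;\cdot,\cdot,\cdot)$ evaluated on the HDG errors. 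The projection identities \eqref{projection_operator_a}--\eqref{projection_operator_c} and \eqref{P_M} then make all terms involving the exact dual solution vanish, leaving only terms of the form (projection error of dual)$\times$(HDG error of primal), each already controlled by $h^{k+1}$ bounds.

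Assembling the estimates, I expect to reach an inequality of the shape
\begin{equation*}
\|\varepsilon_h^z\|_{\mathcal T_h}^2 \lesssim \bigl(\|\delta^{\bm\Phi}\|_{\mathcal T_h}+\|\delta^{\Psi}\|_{\mathcal T_h}+h\|\delta^{\widehat\Psi}\|_{\partial\mathcal T_h}\bigr)\bigl(\|\varepsilon_h^{\bm p}\|_{\mathcal T_h}+\|\varepsilon_h^z-\varepsilon_h^{\widehat z}\|_{\partial\mathcal T_h}\bigr) + (\text{lower-order terms})\cdot\|\varepsilon_h^z\|_{\mathcal T_h},
\end{equation*}
and after applying \Cref{pro_error,pro_error1} to the dual projection errors together with \eqref{dual_esti}, I expect a bound of the form $\|\varepsilon_h^z\|_{\mathcal T_h}\lesssim h\,(\|\varepsilon_h^{\bm p}\|_{\mathcal T_h}+\|\varepsilon_h^z-\varepsilon_h^{\widehat z}\|_{\partial\mathcal T_h})+h^{k+1}(\cdots)$. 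Substituting this back into \eqref{error_es_p} and choosing $\kappa$ sufficiently small (and then absorbing the resulting $h\cdot(\|\varepsilon_h^{\bm p}\|_{\mathcal T_h}+\|\varepsilon_h^z-\varepsilon_h^{\widehat z}\|_{\partial\mathcal T_h})$ term into the left-hand side for $h$ small) yields \eqref{var_p} and \eqref{var_z} simultaneously.

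The main obstacle is the bookkeeping in the duality step: the convection term $\bm\beta\cdot\nabla\Psi$ produces face terms with $\bm\beta\cdot\bm n$ on $\partial\mathcal T_h$, and the mismatch between the two HDG projections $\bm\Pi_V/\Pi_W$ (suited to the dual PDE) and $\widetilde{\bm\Pi}_V/\widetilde\Pi_W$ (used to split the errors in $\bm p,z$) must be tracked carefully so that the correct orthogonalities fire and no spurious terms of order lower than $h^{k+1}$ survive. Properly exploiting \Cref{identical_equa} to exchange the roles of $\mathscr B_1$ and $\mathscr B_2$, together with \Cref{tau_lem} to control the $\tau_1$ vs $\tau_2+\bm\beta\cdot\bm n$ discrepancy, is what makes all remaining face integrals at most $O(h^{k+3/2})$ and therefore harmless.
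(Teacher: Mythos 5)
Your overall architecture coincides with the paper's proof of this lemma: show every term of $\mathbb E$ in Lemma \ref{e_sec} is already $O(h^{k+1})$ (your bounds for $\|\delta^{\bm p}\|_{\mathcal T_h}$, $\|y_h(u)-y\|_{\mathcal T_h}$ and the $h^{k+3/2}$ term are exactly right), then run an Aubin--Nitsche argument on \eqref{Dual_PDE} with $\Theta=\varepsilon_h^z$, and finally absorb the $\kappa\|\varepsilon_h^z\|_{\mathcal T_h}$ term. The absorption is also as in the paper, which simply takes $\kappa=\tfrac{1}{2\mathbb C}$ and needs no smallness of $h$ beyond $h\le 1$.

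The gap is in the duality step. You test \eqref{error_z} with $(\bm\Pi_V\bm\Phi,\Pi_W\Psi,P_M\Psi)$ and assert that these, rather than the tilde projections, are ``the right'' ones because the dual PDE has the opposite convective structure. The paper does the opposite: it tests with $(\widetilde{\bm\Pi}_V\bm{\Phi},\widetilde{\Pi}_W\Psi,P_M\Psi)$, and the choice is not cosmetic. After integrating by parts, the two convective volume contributions are $-(\nabla\varepsilon_h^z,\delta^{\bm\Phi})_{\mathcal T_h}$, coming from $(\varepsilon_h^z,\nabla\cdot\delta^{\bm\Phi})_{\mathcal T_h}$, and $+(\bm\beta\cdot\nabla\varepsilon_h^z,\delta^{\Psi})_{\mathcal T_h}$, coming from $-(\bm\beta\varepsilon_h^z,\nabla\delta^{\Psi})_{\mathcal T_h}$; they cancel exactly because \eqref{projection_operator1_a} gives $(\delta^{\bm\Phi}-\bm\beta\delta^{\Psi})\perp[\mathcal P^{k-1}(K)]^d$, i.e.\ the orthogonality must match the $\bm p-\bm\beta z$ structure of $\mathscr B_2$ (the form being tested), not the convective structure of the dual PDE. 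With your choice, \eqref{projection_operator_a} gives $(\delta^{\bm\Phi}+\bm\beta\delta^{\Psi})\perp[\mathcal P^{k-1}(K)]^d$, so the two contributions add to $2(\bm\beta\cdot\nabla\varepsilon_h^z,\delta^{\Psi})_{\mathcal T_h}$, which does not vanish; since $\Psi$ has only $H^2$ regularity, an inverse inequality gives at best $Ch\|\varepsilon_h^z\|_{\mathcal T_h}^2$ for $k\ge1$ (forcing an extra ``$h$ sufficiently small'' hypothesis) and requires a separate argument for $k=0$. Relatedly, the detour through Lemma \ref{identical_equa} to convert $\mathscr B_2$ into $\mathscr B_1$ is not used by the paper and creates a sign mismatch you would have to repair: \eqref{Dual_PDE} defines the flux by $\bm\Phi=\nabla\Psi$, whereas the $\mathscr B_1$-consistency relations are written for $\bm q=-\nabla y$. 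The paper stays entirely within $\mathscr B_2$, uses only \eqref{projection_operator1_a}--\eqref{projection_operator1_b}, \eqref{P_M}, and the single-valuedness of $\varepsilon_h^{\widehat z}$, and then estimates the resulting terms $R_1,\dots,R_5$ with Lemma \ref{e_sec} and \eqref{dual_esti}.
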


\begin{proof}
	Consider the dual problem \eqref{Dual_PDE} and let $\Theta = \varepsilon_h^z$.  Take  $(\bm r_2,w_2,\mu_2) = (\widetilde {\bm\Pi}_V\bm{\Phi},\widetilde{\Pi}_W\Psi,P_M\Psi)$ in \eqref{error_z} in Lemma \ref{lemma:step1_first_lemma}.  Since $\Psi=0$ on $\varepsilon_h^{\partial}$ we have
	\begin{align*}
	%%%%%%%%%%%%%%%%%%%%%%%%%%%%%%%
	\hspace{1em}&\hspace{-1em}  \mathscr B_2 (\varepsilon^{\bm p}_h,\varepsilon^z_h,\varepsilon^{\widehat z}_h;\widetilde{\bm\Pi}_V\bm{\Phi},\widetilde{\Pi}_W\Psi,P_M\Psi)\\
	%%%%%%%%%%%%%%%%%%%%%%%%%%%%%%%
	&= (\varepsilon^{\bm p}_h,\widetilde{\bm\Pi}_V\bm{\Phi})_{\mathcal T_h}-( \varepsilon^z_h,\nabla\cdot\widetilde{\bm\Pi}_V\bm{\Phi})_{\mathcal T_h}+\langle \varepsilon^{\widehat z}_h,\widetilde{\bm\Pi}_V\bm{\Phi} \cdot\bm n\rangle_{\partial\mathcal T_h}\\
	%%%%%%%%%%%%%%%%%%%%%%%%%%%%%%%%
	&\quad-(\varepsilon^{\bm p}_h-\bm \beta \varepsilon^z_h,  \nabla \widetilde{\Pi}_W\Psi)_{\mathcal T_h}
	+\langle  \varepsilon^{\bm p}_h\cdot\bm n-\bm \beta\cdot\bm n\varepsilon^{\widehat z}_h +\tau_2(\varepsilon^z_h-\varepsilon^{\widehat z}_h ), \widetilde{\Pi}_W\Psi -P_M \Psi\rangle_{\partial\mathcal T_h}\\
	%%%%%%%%%%%%%%%%%%%%%%%%%%%%%%%%%
	&=(\varepsilon^{\bm p}_h,\bm{\Phi})_{\mathcal{T}_h}-(\varepsilon^{\bm p}_h,\delta^{\bm \Phi})_{\mathcal{T}_h}-(\varepsilon^z_h,\nabla\cdot\bm{\Phi})_{\mathcal{T}_h}+(\varepsilon^z_h,\nabla\cdot \delta ^{\bm{\Phi}})_{\mathcal{T}_h} 
	\\
	%%%%%%%%%%%%%%%%%%%%%%%%%%%%%%%%%%%%
	&\quad-\langle \varepsilon^{\widehat z}_h, \delta^{\bm \Phi}\cdot \bm{n}\rangle_{\partial\mathcal{T}_h\backslash\varepsilon_h^\partial} -(\varepsilon^{\bm p}_h-\bm \beta \varepsilon^z_h, \nabla \Psi)_{\mathcal{T}_h}+(\varepsilon^{\bm p}_h-\bm \beta \varepsilon^z_h, \nabla \delta^{\Psi})_{\mathcal{T}_h}\\
	&\quad - \langle  \varepsilon^{\bm p}_h\cdot\bm n-\bm \beta\cdot\bm n\varepsilon^{\widehat z}_h +\tau_2(\varepsilon^z_h-\varepsilon^{\widehat z}_h ),\delta^{\Psi} - \delta^{\widehat \Psi}\rangle_{\partial\mathcal T_h}\\
	%%%%%%%%%%%%%%%%%%%%%%%%%%%%%%%%%%%%%%%%
	&=  -(\varepsilon^{\bm p}_h,\delta^{\bm \Phi})_{\mathcal{T}_h} + \|  \varepsilon_h^z\|_{\mathcal T_h}^2 + (\varepsilon^z_h,\nabla\cdot \delta ^{\bm{\Phi}})_{\mathcal{T}_h} -\langle \varepsilon^{\widehat z}_h, \delta^{\bm \Phi}\cdot \bm{n}\rangle_{\partial\mathcal{T}_h}\\
	& \quad+(\varepsilon^{\bm p}_h-\bm \beta \varepsilon^z_h, \nabla \delta^{\Psi})_{\mathcal{T}_h} -\langle  \varepsilon^{\bm p}_h\cdot\bm n-\bm \beta\cdot\bm n\varepsilon^{\widehat z}_h +\tau_2(\varepsilon^z_h-\varepsilon^{\widehat z}_h ),\delta^{\Psi} - \delta^{\widehat \Psi}\rangle_{\partial\mathcal T_h}.
	\end{align*}
	Here, we used $\langle\varepsilon^{\widehat z}_h,\bm \Phi\cdot\bm n\rangle_{\partial\mathcal T_h}=0$, which holds since  $\varepsilon^{\widehat z}_h$ is single-valued function on interior edges and $\varepsilon^{\widehat z}_h=0$ on $\varepsilon^{\partial}_h$. 
	
	Next, integration by parts gives
	\begin{align*}
	(\varepsilon^z_h,\nabla\cdot\delta^{\bm \Phi})_{\mathcal{T}_h}
	&=\langle \varepsilon^z_h,\delta^{\bm \Phi} \cdot\bm n\rangle_{\partial\mathcal T_h}-(\nabla\varepsilon^z_h,\delta^{\bm \Phi})_{\mathcal{T}_h} = \langle \varepsilon^z_h,\delta^{\bm \Phi}\cdot\bm n\rangle_{\partial\mathcal T_h}-(\nabla\varepsilon^z_h,\bm{\beta}\delta^{\Psi})_{\mathcal T_h},\\
	(\varepsilon^{\bm p}_h, \nabla \delta^{ \Psi})_{\mathcal{T}_h}&=\langle \varepsilon^{\bm p}_h \cdot\bm n, \delta^{ \Psi}\rangle_{\partial\mathcal T_h}-(\nabla\cdot \varepsilon^{\bm p}_h , \delta^{ \Psi})_{\mathcal T_h} = \langle \varepsilon^{\bm p}_h \cdot\bm n, \delta^{ \Psi}\rangle_{\partial\mathcal T_h},\\
	(\bm\beta \varepsilon_h^z, \nabla \delta^{ \Psi})_{\mathcal{T}_h}&=\langle \bm{\beta} \cdot\bm n \varepsilon_h^z, \delta^{ \Psi}\rangle_{\partial\mathcal T_h}  - (\bm{\beta} \nabla\varepsilon_h^z, \delta^{ \Psi})_{\mathcal T_h}. 
	\end{align*}
	We have 
	\begin{align*}
	\hspace{3em}&\hspace{-3em} \mathscr B_2(\varepsilon^{\bm p}_h,\varepsilon^z_h,\varepsilon^{\widehat z}_h;\widetilde{\bm\Pi}_V\bm{\Phi},\widetilde{\Pi}_W\Psi,P_M\Psi)\\
	& = -(\varepsilon^{\bm p}_h,\delta^{\bm \Phi})_{\mathcal{T}_h} + \|  \varepsilon_h^z\|_{\mathcal T_h}^2 + \langle \varepsilon^z_h,\delta^{\bm \Phi}\cdot\bm n\rangle_{\partial\mathcal T_h} -\langle \varepsilon^{\widehat z}_h, \delta^{\bm \Phi}\cdot \bm{n}\rangle_{\partial\mathcal{T}_h\backslash\varepsilon_h^\partial}
	\\
	&\quad-\langle \bm{\beta} \cdot\bm n \varepsilon_h^z, \delta^{ \Psi}\rangle_{\partial\mathcal T_h} -\langle -\bm \beta\cdot\bm n\varepsilon^{\widehat z}_h +\tau_2(\varepsilon^z_h-\varepsilon^{\widehat z}_h ),\delta^{\Psi} - \delta^{\widehat \Psi}\rangle_{\partial\mathcal T_h}.
	\end{align*}
	Remembering that $\varepsilon^{\widehat z}_h$ is single-valued function on interior edges and $\varepsilon^{\widehat z}_h=0$ on $\varepsilon^{\partial}_h$ gives
	\begin{align*}
	\langle\bm \beta\cdot\bm n\varepsilon^{\widehat z}_h,P_M\Psi\rangle_{\partial\mathcal T_h}=0=\langle\bm \beta\cdot\bm n\varepsilon^{\widehat z}_h,\Psi\rangle_{\partial\mathcal T_h}.
	\end{align*}
	This implies
	\begin{align*}
	%%%%%%%%%%%%%%%%%%%%%%%%%%%%%%
	\hspace{3em}&\hspace{-3em}  \mathscr B_2 (\varepsilon^{\bm p}_h,\varepsilon^z_h,\varepsilon^{\widehat z}_h;\widetilde{\bm\Pi}_V\bm{\Phi},\widetilde{\Pi}_W\Psi,P_M\Psi)\\
	%%%%%%%%%%%%%%%%%%%%%%%%%%%%%%
	&=-(\varepsilon^{\bm p}_h,\delta^{\bm \Phi})_{\mathcal{T}_h} + \|  \varepsilon_h^z\|_{\mathcal T_h}^2 + \langle \varepsilon^z_h - \varepsilon^{\widehat z}_h,\delta^{\bm \Phi}\cdot\bm n\rangle_{\partial\mathcal T_h} 
	\\
	%%%%%%%%%%%%%%%%%%%%%%%%%%%%%%
	&\quad-\langle \bm{\beta} \cdot\bm n (\varepsilon_h^z - \varepsilon_h^{\widehat z}), \delta^{ \Psi}\rangle_{\partial\mathcal T_h\backslash\varepsilon_h^\partial} -\langle \tau_2(\varepsilon^z_h-\varepsilon^{\widehat z}_h ),\delta^{\Psi} - \delta^{\widehat \Psi}\rangle_{\partial\mathcal T_h}\\
	%%%%%%%%%%%%%%%%%%%%%%%%%%
	&=-(\varepsilon^{\bm p}_h,\delta^{\bm \Phi})_{\mathcal{T}_h} + \|  \varepsilon_h^z\|_{\mathcal T_h}^2 + \langle \varepsilon^z_h - \varepsilon^{\widehat z}_h,\delta^{\bm \Phi}\cdot\bm n - \bm{\beta}\cdot\bm n\delta^{\Psi}-\tau_2(\delta^{\Psi} - \delta^{\widehat \Psi})\rangle_{\partial\mathcal T_h}.
	\end{align*}
	On the other hand,
	\begin{align*}
	\mathscr B_2 &(\varepsilon^{\bm p}_h,\varepsilon^z_h,\varepsilon^{\widehat z}_h;\widetilde{\bm\Pi}_V\bm{\Phi},\widetilde{\Pi}_W\Psi,P_M\Psi)\\
	& = (\delta^{\bm p},\widetilde{\bm\Pi}_V\bm{\Phi} )_{\mathcal T_h}+(y_h(u)- y, \widetilde{\Pi}_W\Psi)_{\mathcal T_h}+  \langle (\tau_2 - \widetilde \tau_2)\delta^{\widehat z},\widetilde{\Pi}_W\Psi - P_M\Psi\rangle_{\partial\mathcal T_h}.
	\end{align*}
	Comparing the above two equalities gives
	\begin{align*}
	%%%%%%%%%%%%%%%%%%%%%%%%%%%%%%%%%%%%
	\|  \varepsilon_h^z\|_{\mathcal T_h}^2 & = (\varepsilon^{\bm p}_h,\delta^{\bm \Phi})_{\mathcal{T}_h} -\langle \varepsilon^z_h - \varepsilon^{\widehat z}_h,\delta^{\bm \Phi}\cdot\bm n - \bm{\beta}\cdot\bm n\delta^{\Psi}-\tau_2(\delta^{\Psi} - \delta^{\widehat \Psi})\rangle_{\partial\mathcal T_h}\\
	%%%%%%%%%%%%%%%%%%%%%%%%%%%%%%%%%%%%
	%%%%%%%%%%%%%%%%%%%%%%%%%%%
	&\quad+(\delta^{\bm p},\widetilde{\bm\Pi}_V\bm{\Phi} )_{\mathcal T_h}+(y_h(u)- y, \widetilde{\Pi}_W\Psi)_{\mathcal T_h}+  \langle (\tau_2 - \widetilde \tau_2) \delta^{\widehat z},\widetilde{\Pi}_W\Psi - P_M\Psi\rangle_{\partial\mathcal T_h}\\
	&=  \sum_{i=1}^7 R_i.
	\end{align*}
	Let $ C_0 = \max\{C, 1\} $, where $ C $ is the constant defined in Lemma \ref{pro_error1}. For the terms $R_1$ and $R_2$, Lemma \ref{e_sec} gives
	\begin{align*}
	%%%%%%%%%%%%%%%%%%%%%%%%%%%%%%
	R_1&=-(\varepsilon^{\bm p}_h,\delta^{\bm \Phi})_{\mathcal{T}_h}\le \|\varepsilon^{\bm p}_h\|_{\mathcal{T}_h} \|\delta^{\bm \Phi}\|_{\mathcal{T}_h}
	\le \left( \mathbb E+\kappa\|\varepsilon^z_h\|_{\mathcal T_h}\right) C_0(\|\bm \Phi\|_1 + \| \Psi\|_1)\\
	%%%%%%%%%%%%%%%%%%%%%%%%%%%%%%
	&\le C_0C_{\text{reg}}\left( \mathbb E+\kappa\|\varepsilon^z_h\|_{\mathcal T_h}\right) \|\varepsilon^z_h\|_{\mathcal T_h},\\
	%%%%%%%%%%%%%%%%%%%%%%%%%%%%%
	R_2 &= -\langle \varepsilon^z_h - \varepsilon^{\widehat z}_h,\delta^{\bm \Phi}\cdot\bm n - \bm{\beta}\cdot\bm n\delta^{\Psi}-\tau_2(\delta^{\Psi} - \delta^{\widehat \Psi})\rangle_{\partial\mathcal T_h}\\
	%%%%%%%%%%%%%%%%%%%%%%%%%%%%%
	&\le \| {\varepsilon^z_h - \varepsilon^{\widehat z}_h}\|_{\partial \mathcal T_h}(\|{\delta^{\bm \Phi}}\|_{\partial \mathcal T_h}+\| {\tau_1}\|_{0,\infty,\partial\mathcal T_h}\|{\delta^{\Psi}}\|_{\partial \mathcal T_h} +\| {\tau_2}\|_{0,\infty,\partial\mathcal T_h}\|{\delta^{\widehat\Psi}}\|_{\partial \mathcal T_h})\\
	%%%%%%%%%%%%%%%%%%%%%%%%%%%%%%%
	&\le 3\left( \mathbb E+\kappa\|\varepsilon^z_h\|_{\mathcal T_h}\right)(1+\norm {\tau_1}_{0,\infty,\partial\mathcal T_h} + \norm {\tau_2}_{0,\infty,\partial\mathcal T_h})C_0(\|\bm \Phi\|_1 + \| \Psi\|_1)\\
	&\le 3C_0C_{\text{reg}}\left( \mathbb E+\kappa\|\varepsilon^z_h\|_{\mathcal T_h}\right)(1+\norm {\tau_1}_{0,\infty,\partial\mathcal T_h} + \norm {\tau_2}_{0,\infty,\partial\mathcal T_h})\|\varepsilon^z_h\|_{\mathcal T_h}.
	\end{align*}
	For the terms $R_3$, $R_4$ and $R_5$, we use the triangle inequality, the regularity estimate \eqref{dual_esti}, and the assumption $ h \leq 1 $ to give
	\begin{align*}
	R_3&=(\delta^{\bm p},\widetilde{\bm\Pi}_V\bm \Phi)_{\mathcal T_h}\le \|\delta^{\bm p}\|_{\mathcal T_h}(\|\widetilde{\bm\Pi}_V\bm \Phi- \bm \Phi\|_{\mathcal T_h} + \|\bm \Phi\|_{\mathcal T_h})\\
	&\le C_0\|\delta^{\bm p}\|_{\mathcal T_h}(\norm {\bm{\Phi}}_{1,\Omega} + \norm {{\Psi} }_{1,\Omega} +\|\bm \Phi\|_{\mathcal T_h})\\
	&\le 2C_0C_{\text{reg}}\|\delta^{\bm p}\|_{\mathcal T_h}\|\varepsilon^z_h\|_{\mathcal T_h},\\
	%%%%%%%%%%%%%%%%
	R_4&=(y-y_h(u),\widetilde\Pi_W\Psi)_{\mathcal T_h}\le \|y-y_h(u)\|_{\mathcal T_h} \|\widetilde\Pi_W\Psi\|_{\mathcal T_h}\\
	%%%%%%%%%%%%%%%%%
	&\le \|y-y_h(u)\|_{\mathcal T_h}(\|\widetilde\Pi_W\Psi-\Psi\|_{\mathcal T_h}+\|\Psi\|_{\mathcal T_h})\\
	%%%%%%%%%%%%%%%
	&\le C_0\|y-y_h(u)\|_{\mathcal T_h}(\|\Psi\|_{1,\Omega}+\|\bm\Phi\|_{1,\Omega}+\|\Psi\|_{\mathcal T_h})\\
	%%%%%%%%%%%%%%%%%
	&\le 2C_0C_{\text{reg}}\|y-y_h(u)\|_{\mathcal T_h}\|\varepsilon^z_h\|_{\mathcal T_h},\\
	%%%%%%%%%%%%%%%
	R_5 &= \langle (\tau_2 - \widetilde \tau_2) \delta^{\widehat z}, \widetilde{\Pi}_W\Psi - P_M\Psi\rangle_{\partial\mathcal T_h} \\
	%%%%%%%%%%%%%%%%%%%%%%%%
	&\le \| {\tau_2 - \widetilde \tau_2}\|_{0,\infty,\partial\mathcal T_h} \|{\delta^{\widehat z}}\|_{\partial\mathcal T_h} \| {\delta^{\Psi} - \delta^{\widehat \Psi}}\|_{\partial\mathcal T_h}\\
	%%%%%%%%%%%%%%%%%%%
	&\le  C_{\bm \beta}\|\bm \beta\|_{1,\infty,\Omega}  h^{1/2}\|{\delta^{\widehat z}}\|_{\partial\mathcal T_h}C_0(\|\Psi\|_{1,\Omega}+\|\bm\Phi\|_{1,\Omega}+\|\Psi\|_{1,\Omega})\\
	%%%%%%%%%%%%%%%%%%%%%%%%%
	&\le 2C_0C_{\text{reg}}  C_{\bm \beta}\|\bm \beta\|_{1,\infty,\Omega}  h^{1/2}\|{\delta^{\widehat z}}\|_{\partial\mathcal T_h} \|\varepsilon^z_h\|_{\mathcal T_h}.
	\end{align*}
	Summing $R_1$ to $R_5$ gives
	\begin{align*}
	\|\varepsilon^z_h\|_{\mathcal T_h} &\le\mathbb C(\mathbb E +\kappa\|\varepsilon^z_h\|_{\mathcal T_h} ) + C ( \norm {\delta^{\bm p}}_{\mathcal T_h} +\norm{y - y_h(u)}_{\mathcal T_h} + h^{1/2}\|{\delta^{\widehat z}}\|_{\partial\mathcal T_h}),
	\end{align*}
	where
	\begin{align*}
	\mathbb C =4 C_0C_{\text{reg}}  (1+\norm {\tau_1}_{0,\infty,\partial\mathcal T_h} + \norm {\tau_2}_{0,\infty,\partial\mathcal T_h}).
	\end{align*}
	Choose $\kappa=\frac{1}{2\mathbb C}$ gives
	\begin{align*}
	\|\varepsilon^z_h\|_{\mathcal T_h}\lesssim h^{k+1}(|\bm q|_{k+1}+|y|_{k+1}+|\bm p|_{k+1}+|z|_{k+1}).
	\end{align*}
	Finally, \eqref{error_es_p} and \eqref{var_z} imply  \eqref{var_p}.
\end{proof}
As a consequence, a simple application of the triangle inequality gives optimal convergence rates for $\|\bm p -\bm p_h(u)\|_{\mathcal T_h}$ and $\|z -z_h(u)\|_{\mathcal T_h}$:

\begin{lemma}\label{lemma:step3_conv_rates}
	\begin{subequations}
		\begin{align}
		\|\bm p -\bm p_h(u)\|_{\mathcal T_h} &\le \|\delta^{\bm p}\|_{\mathcal T_h} + \|\varepsilon_h^{\bm p}\|_{\mathcal T_h} \ \nonumber \\ 
		&\lesssim h^{k+1}(|\bm q|_{k+1}+|y|_{k+1}+|\bm p|_{k+1}+|z|_{k+1}),\\
		\|z -z_h(u)\|_{\mathcal T_h} &\le \|\delta^{z}\|_{\mathcal T_h} + \|\varepsilon_h^{z}\|_{\mathcal T_h} \ \nonumber \\
		& \lesssim  h^{k+1}(|\bm q|_{k+1}+|y|_{k+1}+|\bm p|_{k+1}+|z|_{k+1}).
		\end{align}
	\end{subequations}
\end{lemma}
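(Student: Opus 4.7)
The plan is to observe that this lemma is a direct corollary obtained by splitting each error into a projection part plus a discrete part, and then combining the already-established bounds on each piece.

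First, I would decompose the errors using the HDG projection $(\widetilde{\bm\Pi}_V,\widetilde{\Pi}_W)$ introduced in \eqref{projection_operator1}. By the definitions in \eqref{notation_1},
\begin{align*}
\bm p - \bm p_h(u) &= \bigl(\bm p - \widetilde{\bm\Pi}_V \bm p\bigr) + \bigl(\widetilde{\bm\Pi}_V \bm p - \bm p_h(u)\bigr) = \delta^{\bm p} + \varepsilon_h^{\bm p},\\
z - z_h(u) &= \bigl(z - \widetilde{\Pi}_W z\bigr) + \bigl(\widetilde{\Pi}_W z - z_h(u)\bigr) = \delta^{z} + \varepsilon_h^{z}.
\end{align*}
The triangle inequality in the $L^2(\mathcal{T}_h)$ norm immediately yields the first inequality in each of the two bounds claimed.

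Next, I would invoke the projection error bounds from Lemma \ref{pro_error1}, summed over all elements $K\in \mathcal T_h$, which give
\begin{align*}
\|\delta^{\bm p}\|_{\mathcal T_h} + \|\delta^{z}\|_{\mathcal T_h} \lesssim h^{k+1}\bigl(|\bm p|_{k+1} + |z|_{k+1}\bigr).
\end{align*}
The discrete-error pieces $\|\varepsilon_h^{\bm p}\|_{\mathcal T_h}$ and $\|\varepsilon_h^z\|_{\mathcal T_h}$ are exactly what the previous lemma controls, namely \eqref{var_p} and \eqref{var_z}, both of which are bounded by $h^{k+1}(|\bm q|_{k+1}+|y|_{k+1}+|\bm p|_{k+1}+|z|_{k+1})$.

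Combining these two estimates yields the desired $O(h^{k+1})$ bounds on $\|\bm p - \bm p_h(u)\|_{\mathcal T_h}$ and $\|z-z_h(u)\|_{\mathcal T_h}$. There is no genuine obstacle here; the entire lemma is a routine triangle-inequality assembly of the preceding work, since all the real effort, namely the duality argument controlling $\|\varepsilon_h^z\|_{\mathcal T_h}$ and the energy bound for $\|\varepsilon_h^{\bm p}\|_{\mathcal T_h}$, has already been completed in the previous lemma.
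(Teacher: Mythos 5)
Your proposal is correct and follows exactly the paper's route: the paper presents this lemma as an immediate consequence of the triangle inequality applied to the splittings $\bm p - \bm p_h(u) = \delta^{\bm p} + \varepsilon_h^{\bm p}$ and $z - z_h(u) = \delta^{z} + \varepsilon_h^{z}$, together with the projection bounds of Lemma \ref{pro_error1} and the estimates \eqref{var_p}--\eqref{var_z}. Nothing is missing.
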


\subsubsection{Step 4: Estimate for $\|u-u_h\|_{\mathcal T_h}$, $\norm {y-y_h}_{\mathcal T_h}$ and $\norm {z-z_h}_{\mathcal T_h}$.}

Next, we bound the error between the solutions of the auxiliary problem and the HDG discretization of the optimality system \eqref{HDG_full_discrete}.  We use these error bounds and the error bounds in Lemmas \ref{le} and \ref{lemma:step3_conv_rates} to obtain the main result.

For the remaining steps, we denote 
\begin{equation*}
\begin{split}
\zeta_{\bm q} &=\bm q_h(u)-\bm q_h,\quad\zeta_{y} = y_h(u)-y_h,\quad\zeta_{\widehat y} = \widehat y_h(u)-\widehat y_h,\\
\zeta_{\bm p} &=\bm p_h(u)-\bm p_h,\quad\zeta_{z} = z_h(u)-z_h,\quad\zeta_{\widehat z} = \widehat z_h(u)-\widehat z_h.
\end{split}
\end{equation*}
Subtracting the auxiliary problem and the HDG problem gives the following error equations
\begin{subequations}\label{eq_yh}
	\begin{align}
	\mathscr B_1(\zeta_{\bm q},\zeta_y,\zeta_{\widehat y};\bm r_1, w_1,\mu_1)&=(u-u_h,w_1)_{\mathcal T_h}\label{eq_yh_yhu}\\
	\mathscr B_2(\zeta_{\bm p},\zeta_z,\zeta_{\widehat z};\bm r_2, w_2,\mu_2)&=-(\zeta_y, w_2)_{\mathcal T_h}\label{eq_zh_zhu}.
	\end{align}
\end{subequations}
\begin{lemma}
	We have
	\begin{align}\label{eq_uuh_yhuyh}
	\hspace{3em}&\hspace{-3em}  \gamma\|u-u_h\|^2_{\mathcal T_h}+\|y_h(u)-y_h\|^2_{\mathcal T_h}\nonumber\\
	&=( z_h-\gamma u_h,u-u_h)_{\mathcal T_h}-(z_h(u)-\gamma u,u-u_h)_{\mathcal T_h}.
	\end{align}
\end{lemma}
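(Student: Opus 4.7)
The plan is to exploit the ``skew-symmetric'' pairing identity of Lemma \ref{identical_equa} applied to the error equations \eqref{eq_yh}. The key observation is that $(z_h-\gamma u_h,u-u_h)_{\mathcal T_h}-(z_h(u)-\gamma u,u-u_h)_{\mathcal T_h}=-(\zeta_z,u-u_h)_{\mathcal T_h}+\gamma\|u-u_h\|^2_{\mathcal T_h}$, so the identity reduces to showing
\[
\|\zeta_y\|^2_{\mathcal T_h} = -(\zeta_z,u-u_h)_{\mathcal T_h}.
\]

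First, I would test \eqref{eq_yh_yhu} with $(\bm r_1,w_1,\mu_1)=(\zeta_{\bm p},-\zeta_z,-\zeta_{\widehat z})$ to get
\[
\mathscr B_1(\zeta_{\bm q},\zeta_y,\zeta_{\widehat y};\zeta_{\bm p},-\zeta_z,-\zeta_{\widehat z}) = -(u-u_h,\zeta_z)_{\mathcal T_h},
\]
and test \eqref{eq_zh_zhu} with $(\bm r_2,w_2,\mu_2)=(-\zeta_{\bm q},\zeta_y,\zeta_{\widehat y})$ to get
\[
\mathscr B_2(\zeta_{\bm p},\zeta_z,\zeta_{\widehat z};-\zeta_{\bm q},\zeta_y,\zeta_{\widehat y}) = -(\zeta_y,\zeta_y)_{\mathcal T_h} = -\|\zeta_y\|^2_{\mathcal T_h}.
\]
Adding the two equations and invoking Lemma \ref{identical_equa} (which kills the left-hand side under assumption \textbf{(A2)}) yields $-(u-u_h,\zeta_z)_{\mathcal T_h}-\|\zeta_y\|^2_{\mathcal T_h}=0$, i.e.\ $\|\zeta_y\|^2_{\mathcal T_h}=(u_h-u,\zeta_z)_{\mathcal T_h}$.

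Second, I would simply rearrange the right-hand side of the target identity:
\begin{align*}
(z_h-\gamma u_h,u-u_h)_{\mathcal T_h}-(z_h(u)-\gamma u,u-u_h)_{\mathcal T_h}
&=(z_h-z_h(u),u-u_h)_{\mathcal T_h}+\gamma(u-u_h,u-u_h)_{\mathcal T_h}\\
&=-(\zeta_z,u-u_h)_{\mathcal T_h}+\gamma\|u-u_h\|^2_{\mathcal T_h},
\end{align*}
using $\zeta_z=z_h(u)-z_h$. Combining this with Step~1 produces the claimed identity.

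The only subtle point is selecting the correct sign pattern in the cross-swap so that the bulk terms $\mathscr B_1+\mathscr B_2$ collapse via Lemma \ref{identical_equa}; assumption \textbf{(A2)} (already imposed) is exactly what makes this cancellation work. Beyond that, the argument is a short algebraic manipulation of the error equations, so no further obstacle is anticipated.
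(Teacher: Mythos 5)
Your proposal is correct and follows essentially the same route as the paper: rearrange the right-hand side using $\zeta_z=z_h(u)-z_h$, then test the error equations \eqref{eq_yh} with the cross-swapped arguments $(\zeta_{\bm p},-\zeta_z,-\zeta_{\widehat z})$ and $(-\zeta_{\bm q},\zeta_y,\zeta_{\widehat y})$ and invoke Lemma \ref{identical_equa} to obtain $\|\zeta_y\|^2_{\mathcal T_h}=-(\zeta_z,u-u_h)_{\mathcal T_h}$. The signs all check out, so nothing further is needed.
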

\begin{proof}
	First, we have
	\begin{align*}
	\hspace{3em}&\hspace{-3em}  ( z_h-\gamma u_h,u-u_h)_{\mathcal T_h}-( z_h(u)-\gamma u,u-u_h)_{\mathcal T_h}\\
	&=-(\zeta_{ z},u-u_h)_{\mathcal T_h}+\gamma\|u-u_h\|^2_{\mathcal T_h}.
	\end{align*}
	Next, Lemma \ref{identical_equa} gives
	\begin{align*}
	\mathscr B_1 &(\zeta_{\bm q},\zeta_y,\zeta_{\widehat{y}};\zeta_{\bm p},-\zeta_{z},-\zeta_{\widehat z}) + \mathscr B_2(\zeta_{\bm p},\zeta_z,\zeta_{\widehat z};-\zeta_{\bm q},\zeta_y,\zeta_{\widehat{y}})  = 0.
	\end{align*}
	On the other hand, using the definition of $ \mathscr B_1 $ and $ \mathscr B_2 $ gives 
	\begin{align*}
	\hspace{3em}&\hspace{-3em}  \mathscr B_1 (\zeta_{\bm q},\zeta_y,\zeta_{\widehat{y}};\zeta_{\bm p},-\zeta_{z},-\zeta_{\widehat z}) + \mathscr B_2(\zeta_{\bm p},\zeta_z,\zeta_{\widehat z};-\zeta_{\bm q},\zeta_y,\zeta_{\widehat{y}})\\
	&= - ( u- u_h,\zeta_{ z})_{\mathcal{T}_h}-\|\zeta_{ y}\|^2_{\mathcal{T}_h}.
	\end{align*}
	Comparing the above two equalities gives
	\begin{align*}
	-(u-u_h,\zeta_{ z})_{\mathcal{T}_h}=\|\zeta_{ y}\|^2_{\mathcal{T}_h}.
	\end{align*}
	This completes the proof.
\end{proof}

\begin{theorem}\label{thm:estimates_u_y_z}
	We have
	\begin{subequations}
		\begin{align}\label{err_yhu_yh}
		\|u-u_h\|_{\mathcal T_h}&\lesssim h^{k+1}(|\bm q|_{k+1}+|y|_{k+1}+|\bm p|_{k+1}+|z|_{k+1}),\\
		\|y-y_h\|_{\mathcal T_h}&\lesssim h^{k+1}(|\bm q|_{k+1}+|y|_{k+1}+|\bm p|_{k+1}+|z|_{k+1}),\\
		\|z-z_h\|_{\mathcal T_h}&\lesssim h^{k+1}(|\bm q|_{k+1}+|y|_{k+1}+|\bm p|_{k+1}+|z|_{k+1}).
		\end{align}
	\end{subequations}
\end{theorem}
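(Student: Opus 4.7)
The plan is to combine the identity \eqref{eq_uuh_yhuyh} with the continuous and discrete optimality conditions, and then use the auxiliary estimates from Lemmas \ref{le} and \ref{lemma:step3_conv_rates} to conclude.

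First I would handle the two terms on the right side of \eqref{eq_uuh_yhuyh}. For the first term, take $w_3 = z_h - \gamma u_h \in W_h$ in the discrete optimality condition \eqref{HDG_full_discrete_e} to conclude $z_h - \gamma u_h = 0$ pointwise, so that $(z_h - \gamma u_h, u - u_h)_{\mathcal T_h} = 0$. For the second term, the continuous optimality condition \eqref{eq_adeq_e} gives $\gamma u = z$, so
\begin{align*}
-(z_h(u) - \gamma u, u - u_h)_{\mathcal T_h} = (z - z_h(u), u - u_h)_{\mathcal T_h}.
\end{align*}
Substituting into \eqref{eq_uuh_yhuyh} yields
\begin{align*}
\gamma \|u - u_h\|_{\mathcal T_h}^2 + \|y_h(u) - y_h\|_{\mathcal T_h}^2 = (z - z_h(u), u - u_h)_{\mathcal T_h}.
\end{align*}

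Next I would apply Cauchy-Schwarz followed by Young's inequality with parameter $\gamma$ to the right-hand side, obtaining
\begin{align*}
\tfrac{\gamma}{2} \|u - u_h\|_{\mathcal T_h}^2 + \|\zeta_y\|_{\mathcal T_h}^2 \le \tfrac{1}{2\gamma} \|z - z_h(u)\|_{\mathcal T_h}^2.
\end{align*}
Lemma \ref{lemma:step3_conv_rates} bounds the right-hand side by $C h^{2(k+1)}$ times the sum of the relevant seminorms squared, which immediately gives the stated estimate for $\|u - u_h\|_{\mathcal T_h}$ and simultaneously the bound $\|\zeta_y\|_{\mathcal T_h} \lesssim h^{k+1}(\cdots)$.

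For $\|y - y_h\|_{\mathcal T_h}$, I would insert $y_h(u)$ and use the triangle inequality $\|y - y_h\|_{\mathcal T_h} \le \|y - y_h(u)\|_{\mathcal T_h} + \|\zeta_y\|_{\mathcal T_h}$; the first piece is controlled by Lemma \ref{le} and the second by the bound just established. For $\|z - z_h\|_{\mathcal T_h}$, the cleanest route is to observe that $z_h = \gamma u_h$ pointwise (from the discrete optimality condition above) and $z = \gamma u$ pointwise (from the continuous optimality condition), so $\|z - z_h\|_{\mathcal T_h} = \gamma \|u - u_h\|_{\mathcal T_h}$, which inherits the estimate already proved.

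There is no real obstacle in this step: the substantive work was already carried out in establishing Lemmas \ref{le} and \ref{lemma:step3_conv_rates} and the identity \eqref{eq_uuh_yhuyh}. The only subtlety worth being careful about is the first-term cancellation, which relies on the fact that $z_h - \gamma u_h$ itself lies in $W_h$ so that testing \eqref{HDG_full_discrete_e} against it forces the difference to vanish pointwise (not merely in a weak sense against $W_h$).
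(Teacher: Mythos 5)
Your proposal is correct and follows essentially the same route as the paper: kill the first term of \eqref{eq_uuh_yhuyh} via the discrete optimality condition ($z_h=\gamma u_h$ pointwise, since $z_h-\gamma u_h\in W_h$), rewrite the second via $\gamma u=z$, apply Cauchy--Schwarz and Young's inequality, and invoke Lemmas \ref{le} and \ref{lemma:step3_conv_rates} together with the triangle inequality and the identities $z=\gamma u$, $z_h=\gamma u_h$ for the remaining two bounds. The paper simply writes the cancellation and the rewriting as a single step, $-(z_h(u)-z,u-u_h)_{\mathcal T_h}$, but the argument is identical.
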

\begin{proof}
	Recall the continuous and discretized optimality conditions \eqref{eq_adeq_e} and \eqref{HDG_full_discrete_e} gives $ \gamma u = z $ and $ \gamma u_h = z_h $.  These equations and the previous lemma give
	% From \eqref{eq_adeq_e}, \eqref{HDG_full_discrete_e} and \eqref{eq_uuh_yhuyh}, 
	\begin{align*}
	\hspace{3em}&\hspace{-3em}   \gamma\|u-u_h\|^2_{\mathcal T_h}+\|\zeta_{ y}\|^2_{\mathcal T_h}\\
	&=( z_h-\gamma u_h,u-u_h)_{\mathcal T_h}-( z_h(u)-\gamma u,u-u_h)_{\mathcal T_h}\\
	&=-( z_h(u)- z,u-u_h)_{\mathcal T_h}\\
	&\le \| z_h(u)- z\|_{\mathcal T_h} \|u-u_h\|_{\mathcal T_h}\\
	&\le\frac{1}{2\gamma}\| z_h(u)- z\|^2_{\mathcal T_h}+\frac{\gamma}{2}\|u-u_h\|^2_{\mathcal T_h}.
	\end{align*}
	By Lemma \ref{lemma:step3_conv_rates}, we have
	\begin{align}\label{eqn:estimate_u_zeta_y}
	\|u-u_h\|_{\mathcal T_h}+\|\zeta_{ y}\|_{\mathcal T_h}&\lesssim h^{k+1}(|\bm q|_{k+1}+|y|_{k+1}+|\bm p|_{k+1}+|z|_{k+1}).
	\end{align}
	Then, by the triangle inequality and Lemma \ref{le} we obtain
	\begin{align*}
	\|y-y_h\|_{\mathcal T_h}&\lesssim h^{k+1}(|\bm q|_{k+1}+|y|_{k+1}+|\bm p|_{k+1}+|z|_{k+1}).
	\end{align*}
	Finally, since $z = \gamma u $ and $z_h = \gamma u_h$ we have
	\begin{align*}
	\|z-z_h\|_{\mathcal T_h}&\lesssim h^{k+1}(|\bm q|_{k+1}+|y|_{k+1}+|\bm p|_{k+1}+|z|_{k+1}).
	\end{align*}
\end{proof}

\subsubsection{Step 5: Estimate for $\|q-q_h\|_{\mathcal T_h}$ and  $\|p-p_h\|_{\mathcal T_h}$.}

\begin{lemma}
	We have
	\begin{subequations}
		\begin{align}
		\|\zeta_{\bm q}\|_{\mathcal T_h} &\lesssim h^{k+1}(|\bm q|_{k+1}+|y|_{k+1}+|\bm p|_{k+1}+|z|_{k+1}),\label{err_Lhu_qh}\\
		\|\zeta_{\bm p}\|_{\mathcal T_h} &\lesssim h^{k+1}(|\bm q|_{k+1}+|y|_{k+1}+|\bm p|_{k+1}+|z|_{k+1}).\label{err_Lhu_ph}
		\end{align}
	\end{subequations}
\end{lemma}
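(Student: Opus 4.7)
The plan is to derive both bounds by a direct energy argument applied to the error equations \eqref{eq_yh}, exploiting the coercivity identity in Lemma~\ref{property_B} together with the estimates already established in Step~4.

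First, I would test \eqref{eq_yh_yhu} with the triple $(\bm r_1,w_1,\mu_1)=(\zeta_{\bm q},\zeta_y,\zeta_{\widehat y})$. Lemma~\ref{property_B} turns the left side into
\begin{equation*}
\|\zeta_{\bm q}\|_{\mathcal T_h}^2 + \bigl\langle(\tau_1 - \tfrac12\bm\beta\cdot\bm n)(\zeta_y-\zeta_{\widehat y}),\zeta_y-\zeta_{\widehat y}\bigr\rangle_{\partial\mathcal T_h\setminus\varepsilon_h^\partial} + \bigl\langle(\tau_1 - \tfrac12\bm\beta\cdot\bm n)\zeta_y,\zeta_y\bigr\rangle_{\varepsilon_h^\partial},
\end{equation*}
while the right side becomes $(u-u_h,\zeta_y)_{\mathcal T_h}$. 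Since assumption \textbf{(A3)} makes the two boundary terms nonnegative, they may be dropped, and Cauchy--Schwarz combined with the bounds on $\|u-u_h\|_{\mathcal T_h}$ and $\|\zeta_y\|_{\mathcal T_h}$ from \eqref{eqn:estimate_u_zeta_y} yields $\|\zeta_{\bm q}\|_{\mathcal T_h}^2 \le \|u-u_h\|_{\mathcal T_h}\|\zeta_y\|_{\mathcal T_h}\lesssim h^{2(k+1)}(|\bm q|_{k+1}+|y|_{k+1}+|\bm p|_{k+1}+|z|_{k+1})^2$, giving \eqref{err_Lhu_qh}.

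For \eqref{err_Lhu_ph}, I first need control of $\zeta_z=z_h(u)-z_h$. This follows from the triangle inequality together with Lemma~\ref{lemma:step3_conv_rates} and the estimate $\|z-z_h\|_{\mathcal T_h}$ from Theorem~\ref{thm:estimates_u_y_z}:
\begin{equation*}
\|\zeta_z\|_{\mathcal T_h} \le \|z-z_h(u)\|_{\mathcal T_h} + \|z-z_h\|_{\mathcal T_h} \lesssim h^{k+1}(|\bm q|_{k+1}+|y|_{k+1}+|\bm p|_{k+1}+|z|_{k+1}).
\end{equation*}
Then, testing \eqref{eq_zh_zhu} with $(\bm r_2,w_2,\mu_2)=(\zeta_{\bm p},\zeta_z,\zeta_{\widehat z})$ and invoking Lemma~\ref{property_B} together with \eqref{eqn:tau1_condition} (guaranteeing the boundary terms are nonnegative) gives $\|\zeta_{\bm p}\|_{\mathcal T_h}^2 \le -(\zeta_y,\zeta_z)_{\mathcal T_h} \le \|\zeta_y\|_{\mathcal T_h}\|\zeta_z\|_{\mathcal T_h}$, and combining the two bounds yields \eqref{err_Lhu_ph}.

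There is essentially no hard obstacle here because the heavy lifting — the duality argument in Step~3 and the reconstruction of $\|u-u_h\|_{\mathcal T_h}$ and $\|\zeta_y\|_{\mathcal T_h}$ in Step~4 — is already done. The only subtlety worth flagging is the order of operations: the bound for $\zeta_{\bm p}$ requires $\|\zeta_z\|_{\mathcal T_h}$, which must be assembled from Lemma~\ref{lemma:step3_conv_rates} and the $z$-estimate of Theorem~\ref{thm:estimates_u_y_z} rather than read off directly from the earlier work. Once this is in place, the final statement of Theorem~\ref{main_res} follows by triangle inequality using Lemmas~\ref{le} and~\ref{lemma:step3_conv_rates}.
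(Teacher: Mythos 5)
Your argument coincides with the paper's proof: both bounds are obtained by testing the error equations \eqref{eq_yh_yhu} and \eqref{eq_zh_zhu} with the error triples themselves, invoking the coercivity identity of Lemma~\ref{property_B} to isolate $\|\zeta_{\bm q}\|_{\mathcal T_h}^2$ and $\|\zeta_{\bm p}\|_{\mathcal T_h}^2$, and then applying Cauchy--Schwarz with \eqref{eqn:estimate_u_zeta_y}, Lemma~\ref{lemma:step3_conv_rates}, and Theorem~\ref{thm:estimates_u_y_z}; in particular your triangle-inequality assembly of $\|\zeta_z\|_{\mathcal T_h}$ is exactly the step the paper takes. The proposal is correct and essentially identical to the paper's proof.
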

\begin{proof}
	By Lemma \ref{property_B}, the error equation \eqref{eq_yh_yhu}, and the estimate \eqref{eqn:estimate_u_zeta_y} we have
	\begin{align*}
	\|\zeta_{\bm q}\|^2_{\mathcal T_h} &\lesssim  \mathscr B_1(\zeta_{\bm q},\zeta_y,\zeta_{\widehat y};\zeta_{\bm q},\zeta_y,\zeta_{\widehat y})\\
	&=( u- u_h,\zeta_{ y})_{\mathcal T_h}\\
	&\le\| u- u_h\|_{\mathcal T_h}\|\zeta_{ y}\|_{\mathcal T_h}\\
	&\lesssim h^{2k+2}(|\bm q|_{k+1}+|y|_{k+1}+|\bm p|_{k+1}+|z|_{k+1})^2.
	\end{align*}
	Similarly,  by Lemma \ref{property_B}, the error equation \eqref{eq_zh_zhu}, Lemma \ref{lemma:step3_conv_rates}, and Theorem \ref{thm:estimates_u_y_z} we have
	\begin{align*}
	\|\zeta_{\bm p}\|^2_{\mathcal T_h} &\lesssim  \mathscr B_2(\zeta_{\bm p},\zeta_z,\zeta_{\widehat z};\zeta_{\bm p},\zeta_z,\zeta_{\widehat z})\\
	&=-(\zeta_{ y},\zeta_{ z})_{\mathcal T_h}\\
	&\le\|\zeta_{y}\|_{\mathcal T_h}\|\zeta_{ z}\|_{\mathcal T_h}\\
	&\le\|\zeta_{y}\|_{\mathcal T_h} ( \| z_h(u) - z \|_{\mathcal T_h} + \| z - z_h \|_{\mathcal T_h} )\\
	&\lesssim h^{2k+2}(|\bm q|_{k+1}+|y|_{k+1}+|\bm p|_{k+1}+|z|_{k+1})^2.
	\end{align*}
\end{proof}
The above lemma along with the triangle inequality, Lemma \ref{le}, and Lemma \ref{lemma:step3_conv_rates} complete the proof of the main result:
\begin{theorem}
	We have
	\begin{subequations}
		\begin{align}
		\|\bm q-\bm q_h\|_{\mathcal T_h}&\lesssim h^{k+1}(|\bm q|_{k+1}+|y|_{k+1}+|\bm p|_{k+1}+|z|_{k+1}),\label{err_q}\\
		\|\bm p-\bm p_h\|_{\mathcal T_h}&\lesssim h^{k+1}(|\bm q|_{k+1}+|y|_{k+1}+|\bm p|_{k+1}+|z|_{k+1})\label{err_p}.
		\end{align}
	\end{subequations}
\end{theorem}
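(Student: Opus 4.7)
The plan is to obtain both flux estimates by a straightforward triangle inequality, splitting each error through the auxiliary solution used throughout the preceding steps. Specifically, I write
\begin{align*}
\|\bm q - \bm q_h\|_{\mathcal T_h} &\le \|\bm q - \bm q_h(u)\|_{\mathcal T_h} + \|\bm q_h(u) - \bm q_h\|_{\mathcal T_h} = \|\bm q - \bm q_h(u)\|_{\mathcal T_h} + \|\zeta_{\bm q}\|_{\mathcal T_h},\\
\|\bm p - \bm p_h\|_{\mathcal T_h} &\le \|\bm p - \bm p_h(u)\|_{\mathcal T_h} + \|\bm p_h(u) - \bm p_h\|_{\mathcal T_h} = \|\bm p - \bm p_h(u)\|_{\mathcal T_h} + \|\zeta_{\bm p}\|_{\mathcal T_h}.
\end{align*}

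Then I apply the bounds that are already available: for the first term in each line I use Lemma \ref{le} (for $\bm q - \bm q_h(u)$) and Lemma \ref{lemma:step3_conv_rates} (for $\bm p - \bm p_h(u)$), both of which already yield $O(h^{k+1})$ with the desired seminorm dependence. For the second term in each line I invoke the immediately preceding lemma, which provides the $O(h^{k+1})$ bounds on $\|\zeta_{\bm q}\|_{\mathcal T_h}$ and $\|\zeta_{\bm p}\|_{\mathcal T_h}$ via Lemma \ref{property_B} and the error equations \eqref{eq_yh_yhu}--\eqref{eq_zh_zhu}, combined with the control estimate from Theorem \ref{thm:estimates_u_y_z}.

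There is no genuine obstacle at this stage because all the heavy lifting has been done in Steps 1--4: the coercivity property in Lemma \ref{property_B}, the optimal projection error bounds in Lemmas \ref{pro_error}--\ref{pro_error1}, the duality argument producing the $L^2$ estimate for $\varepsilon_h^z$, and the key identity in Lemma \ref{identical_equa} that lets us bound $\|u - u_h\|_{\mathcal T_h}$ and $\|\zeta_y\|_{\mathcal T_h}$. The present theorem amounts to assembling these pieces, so the proof reduces to citing the relevant lemmas and invoking the triangle inequality; the final rates for $\bm q - \bm q_h$ and $\bm p - \bm p_h$ inherit their seminorm dependence directly from the sum of the auxiliary-error bound and the $\zeta$-bound.
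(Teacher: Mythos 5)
Your proposal is correct and matches the paper's proof exactly: the paper also splits each flux error through the auxiliary solution via the triangle inequality, bounds $\|\bm q-\bm q_h(u)\|_{\mathcal T_h}$ and $\|\bm p-\bm p_h(u)\|_{\mathcal T_h}$ by Lemma \ref{le} and Lemma \ref{lemma:step3_conv_rates}, and bounds $\|\zeta_{\bm q}\|_{\mathcal T_h}$ and $\|\zeta_{\bm p}\|_{\mathcal T_h}$ by the immediately preceding lemma. No discrepancies.
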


\section{Numerical Experiments}
\label{sec:numerics}

In this section, we present three numerical examples to confirm our theoretical results. We consider two 2D problems on a square domain $\Omega = [0,1]\times [0,1] \subset \mathbb{R}^2$, and a 3D problem on a cubic domain $\Omega = [0,1]\times [0,1]\times [0,1] \subset \mathbb{R}^3$.  For the three examples, we take $\gamma = 1$ and specify the exact state, dual state, and function $\bm \beta$.  The data $f$, $ g $, and $y_d$ is generated from the optimality system \eqref{eq_adeq}.  Also, we chose $ \tau_1 = 1 $ and set $ \tau_2 $ using \textbf{(A2)}.  For all three examples, conditions \textbf{(A1)}-\textbf{(A3)} are satisfied.

Numerical results for $ k = 0 $ and $ k = 1 $ for the three examples are shown in Table \ref{table_1}--Table \ref{table_6}.  The observed convergence rates exactly match the theoretical results.

\begin{example}\label{example1}
	We take $\bm{\beta} = [1,1]$, state $ y(x_1,x_2) = \sin(\pi x_1) $, and dual state $ z(x_1,x_2) = \sin(\pi x_1)\sin(\pi x_2)$. 
	\begin{table}%[!hbp]
		\begin{center}
			\begin{tabular}{|c|c|c|c|c|c|}
				\hline
				$h/\sqrt 2$ &$1/8$& $1/16$&$1/32$ &$1/64$ & $1/128$ \\
				\hline
				$\norm{\bm{q}-\bm{q}_h}_{0,\Omega}$&1.7818e-01   &8.6412e-02   &4.2357e-02   &2.0948e-02   &1.0415e-02 \\
				\hline
				order&-& 1.04& 1.03  &1.02& 1.00\\
				\hline
				$\norm{\bm{p}-\bm{p}_h}_{0,\Omega}$& 4.2057e-01   &2.1839e-01   &1.1116e-01   &5.6062e-02   &2.8151e-02 \\
				\hline
				order&-&  0.94&0.97 &0.99 & 1.00 \\
				\hline
				$\norm{{y}-{y}_h}_{0,\Omega}$&1.6300e-01   &8.4087e-02   &4.2612e-02   &2.1437e-02   &1.0750e-02\\
				\hline
				order&-& 0.95&0.98&0.99 & 1.00 \\
				\hline
				$\norm{{z}-{z}_h}_{0,\Omega}$& 2.1310e-01   &1.0803e-01   &5.4219e-02   &2.7138e-02   &1.3573e-02 \\
				\hline
				order&-& 0.98&0.99&1.00& 1.00 \\
				\hline
			\end{tabular}
		\end{center}
		\caption{Example \ref{example1}: Errors for the state $y$, adjoint state $z$, and the fluxes $\bm q$ and $\bm p$ when $k=0$.}\label{table_1}
	\end{table}

	\begin{table}%[!hbp]
		\begin{center}
			\begin{tabular}{|c|c|c|c|c|c|}
				\hline
				$h/\sqrt 2$ &$1/8$& $1/16$&$1/32$ &$1/64$ & $1/128$ \\
				\hline
				$\norm{\bm{q}-\bm{q}_h}_{0,\Omega}$&1.3708e-02 &3.5192e-03   &8.8851e-04   &2.2301e-04   &5.5850e-05 \\
				\hline
				order&-& 2.00& 2.00  &2.00& 2.00\\
				\hline
				$\norm{\bm{p}-\bm{p}_h}_{0,\Omega}$& 3.4995e-02   &8.9472e-03   &2.2581e-03   &5.6694e-04   &1.4202e-04 \\
				\hline
				order&-&  2.00&2.00 &2.00 & 2.00 \\
				\hline
				$\norm{{y}-{y}_h}_{0,\Omega}$&1.1705e-02   &2.9528e-03   &7.4012e-04   &1.8519e-04   &4.6315e-05\\
				\hline
				order&-& 2.00&2.00&2.00 & 2.00 \\
				\hline
				$\norm{{z}-{z}_h}_{0,\Omega}$& 2.3361e-02   &5.9059e-03   &1.4810e-03   &3.7059e-04   &9.2676e-05 \\
				\hline
				order&-& 2.00&2.00&2.00& 2.00 \\
				\hline
			\end{tabular}
		\end{center}
		\caption{Example \ref{example1}: Errors for the state $y$, adjoint state $z$, and the fluxes $\bm q$ and $\bm p$ when $k=1$.}\label{table_2}
	\end{table}
\end{example}

\begin{example}\label{example2}
	% Let $\bm{\beta} = [y,x]$, the exact state and dual state are chosen as $\sin(\pi x)$ and $ \sin(\pi x)\sin(\pi y)$, respectively. 
	We take $\bm{\beta} = [x_2,x_1]$, state $ y(x_1,x_2) = \sin(\pi x_1) $, and dual state $ z(x_1,x_2) = \sin(\pi x_1)\sin(\pi x_2)$. 
	\begin{table}%[!hbp]
		\begin{center}
			\begin{tabular}{|c|c|c|c|c|c|}
				\hline
				$h/\sqrt 2$ &$1/8$& $1/16$&$1/32$ &$1/64$ & $1/128$ \\
				\hline
				$\norm{\bm{q}-\bm{q}_h}_{0,\Omega}$ &1.7838e-01   &8.6461e-02   &4.2375e-02   &2.0957e-02   &1.0419e-02 \\
				\hline
				order&-& 1.04& 1.03  &1.02& 1.00\\
				\hline
				$\norm{\bm{p}-\bm{p}_h}_{0,\Omega}$& 4.2050e-01   &2.1848e-01   &1.1123e-01   &5.6101e-02   &2.8171e-02 \\
				\hline
				order&-&  0.95&0.97 &0.99 & 0.99 \\
				\hline
				$\norm{{y}-{y}_h}_{0,\Omega}$&1.6285e-01   &8.4032e-02   &4.2588e-02   &2.1426e-02   &1.0744e-02\\
				\hline
				order&-& 0.95&0.98&0.99 & 1.00 \\
				\hline
				$\norm{{z}-{z}_h}_{0,\Omega}$& 2.1223e-01   &1.0773e-01   &5.4094e-02   &2.7081e-02   &1.3546e-02 \\
				\hline
				order&-& 0.98&0.99&1.00& 1.00 \\
				\hline
			\end{tabular}
		\end{center}
		\caption{Example \ref{example2}: Errors for the state $y$, adjoint state $z$, and the fluxes $\bm q$ and $\bm p$ when $k=0$.}\label{table_3}
	\end{table}

	\begin{table}%[!hbp]
		\begin{center}
			\begin{tabular}{|c|c|c|c|c|c|}
				\hline
				$h/\sqrt 2$ &$1/8$& $1/16$&$1/32$ &$1/64$ & $1/128$ \\
				\hline
				$\norm{\bm{q}-\bm{q}_h}_{0,\Omega}$&1.3713e-02   &3.5195e-03   &8.8853e-04   &2.2301e-04   &5.5850e-05 \\
				\hline
				order&-& 2.00& 2.00  &2.00& 2.00\\
				\hline
				$\norm{\bm{p}-\bm{p}_h}_{0,\Omega}$ & 3.5010e-02   &8.9481e-03   &2.2581e-03   &5.6694e-04   &1.4202e-04 \\
				\hline
				order&-&  2.00&2.00 &2.00 & 2.00 \\
				\hline
				$\norm{{y}-{y}_h}_{0,\Omega}$&1.1712e-02   &2.9532e-03   &7.4015e-04   &1.8520e-04   &4.6315e-05\\
				\hline
				order&-& 2.00&2.00&2.00 & 2.00 \\
				\hline
				$\norm{{z}-{z}_h}_{0,\Omega}$& 2.3368e-02   &5.9064e-03   &1.4810e-03   &3.7059e-04   &9.2676e-05 \\
				\hline
				order&-& 2.00&2.00&2.00& 2.00 \\
				\hline
			\end{tabular}
		\end{center}
		\caption{Example \ref{example2}: Errors for the state $y$, adjoint state $z$, and the fluxes $\bm q$ and $\bm p$ when $k=1$.}\label{table_4}
	\end{table}
\end{example}

\begin{example}\label{example3}
%	Let $\bm{\beta} = [1,1,1]$, the exact state and dual state are chosen as $\sin(\pi x)$ and $ \sin(\pi x)\sin(\pi y)\sin(\pi z)$, respectively. 
	We take $\bm{\beta} = [1,1,1]$, state $ y(x_1,x_2,x_3) = \sin(\pi x_1) $, and dual state $ z(x_1,x_2,x_3) = \sin(\pi x_1)\sin(\pi x_2)\sin(\pi x_3)$. 
	\begin{table}%[!hbp]
		\begin{center}
			\begin{tabular}{|c|c|c|c|c|c|}
				\hline
				$h/\sqrt 2$ &$1/2$& $1/4$&$1/8$ &$1/16$ & $1/32$ \\
				\hline
				$\norm{\bm{q}-\bm{q}_h}_{0,\Omega}$ &6.3167e-01   &3.4472e-01 &1.7715e-01   &8.9373e-02   &4.4778e-02 \\
				\hline
				order&-& 0.87& 0.96 &0.99& 1.00\\
				\hline
				$\norm{\bm{p}-\bm{p}_h}_{0,\Omega}$& 4.9907e-01   &2.9505e-01   &1.5339e-01   &7.7393e-02   &3.8724e-02 \\
				\hline
				order&-&  0.76&0.94 &0.99 & 1.00 \\
				\hline
				$\norm{{y}-{y}_h}_{0,\Omega}$&1.7959e-01   &1.0026e-01   &5.3061e-02   &2.7275e-02   &1.3646e-02\\
				\hline
				order&-& 0.84&0.92&0.96 & 1.00 \\
				\hline
				$\norm{{z}-{z}_h}_{0,\Omega}$& 2.3121e-01   &1.3646e-01   &7.2318e-02   &3.7004e-02   &1.8587e-02 \\
				\hline
				order&-& 0.76&0.92&0.97& 1.00 \\
				\hline
			\end{tabular}
		\end{center}
		\caption{Example \ref{example3}: Errors for the state $y$, adjoint state $z$, and the fluxes $\bm q$ and $\bm p$ when $k=0$.}\label{table_5}
	\end{table}

	\begin{table}%[!hbp]
				\begin{center}
			\begin{tabular}{|c|c|c|c|c|c|}
				\hline
				$h/\sqrt 2$ &$1/2$& $1/4$&$1/8$ &$1/16$ & $1/32$ \\
				\hline
				$\norm{\bm{q}-\bm{q}_h}_{0,\Omega}$&9.2498e-02   &2.7594e-02   &7.4959e-03   &1.9486e-03   &4.8720e-04\\
				\hline
				order&-& 1.75& 1.90  &1.94& 2.00\\
				\hline
				$\norm{\bm{p}-\bm{p}_h}_{0,\Omega}$ & 1.8360e-01   &5.3637e-02   &1.3921e-02   &3.5138e-03   &8.7857e-04 \\
				\hline
				order&-&  1.80&1.95 &1.99 & 2.00 \\
				\hline
				$\norm{{y}-{y}_h}_{0,\Omega}$&4.4822e-02   &1.1780e-02   &2.9545e-03   &7.3644e-04   &1.8423e-04\\
				\hline
				order&-& 1.93&2.00&2.00 & 2.00 \\
				\hline
				$\norm{{z}-{z}_h}_{0,\Omega}$& 9.1413e-02   &2.7583e-02   &7.3069e-03   &1.8623e-03   &4.6575e-04 \\
				\hline
				order&-& 1.73&1.92&1.97& 2.00 \\
				\hline
			\end{tabular}
		\end{center}
		\caption{Example \ref{example3}: Errors for the state $y$, adjoint state $z$, and the fluxes $\bm q$ and $\bm p$ when $k=1$.}\label{table_6}
	\end{table}
\end{example}

\section{Conclusions}
We proposed an HDG method to approximate the solution of an optimal distributed control problems for an elliptic convection diffusion equation. We obtained optimal a priori error estimates for the control, state, dual state, and their fluxes. The next step is to study optimal control problems governed by more complicated PDEs governing fluids.  It would also be of interest to investigate if postprocessing gives superconvergence for this optimal control problem.

%\section*{Acknowledgements}  W.\ Hu was supported in part by a postdoctoral fellowship for the annual program on Control Theory and its Applications at the Institute for Mathematics and its Applications (IMA) at the University of Minnesota.  J.\ Singler and Y.\ Zhang were supported in part by National Science Foundation grant DMS-1217122.  J.\ Singler and Y.\ Zhang thank the IMA for funding research visits, during which some of this work was completed.  X.\ Zheng thanks Missouri University of Science and Technology for hosting him as a visiting scholar; some of this work was completed during his research visit.

\section*{Appendix}
\label{sec:local_solver_details}

Before we investigate the local elimination, we give the following proposition.
\begin{proposition}
	The matrices $A_{12}$ and $A_{13}$ in \eqref{system_equation} are positive definite.
\end{proposition}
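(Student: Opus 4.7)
The plan is to reduce positive definiteness to a local, element-by-element statement and then to an integration-by-parts computation that uses $\nabla\cdot\bm\beta=0$ together with assumption~\textbf{(A3)}.

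First I would observe that because $W_h$ is a fully discontinuous space, the basis $\{\phi_j\}$ can be chosen so each $\phi_j$ is supported in a single element. For such a basis, all the integrals defining $A_5=[(\bm\beta\phi_j,\nabla\phi_i)_{\mathcal T_h}]$, $A_6=[\langle\tau_1\phi_j,\phi_i\rangle_{\partial\mathcal T_h}]$ and $A_7=[\langle\bm\beta\cdot\bm n\phi_j,\phi_i\rangle_{\partial\mathcal T_h}]$ vanish unless $\phi_i$ and $\phi_j$ live on the same element. Consequently $A_{12}=A_6-A_5$ and $A_{13}=A_5+A_6-A_7$ are block-diagonal with one $\dim\mathcal P^k(K)$-square block per element $K\in\mathcal T_h$, and it suffices to prove each local block is positive definite.

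Second, for $w_h\in\mathcal P^k(K)$ with coefficient vector $\mathfrak w$, I would translate the quadratic forms back to their bilinear expressions
\[
\mathfrak w^{\top}\! A_{12}^{K}\mathfrak w=-(\bm\beta w_h,\nabla w_h)_K+\langle\tau_1 w_h,w_h\rangle_{\partial K},
\]
\[
\mathfrak w^{\top}\! A_{13}^{K}\mathfrak w=(\bm\beta w_h,\nabla w_h)_K+\langle\tau_1 w_h,w_h\rangle_{\partial K}-\langle\bm\beta\cdot\bm n\,w_h,w_h\rangle_{\partial K},
\]
and then apply exactly the identity already used in the proof of Lemma~\ref{property_B}: integration by parts on $K$ combined with $\nabla\cdot\bm\beta=0$ yields $(\bm\beta w_h,\nabla w_h)_K=\tfrac12\langle\bm\beta\cdot\bm n\,w_h,w_h\rangle_{\partial K}$. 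Substituting, both forms collapse to the same boundary expression
\[
\mathfrak w^{\top}\! A_{12}^{K}\mathfrak w=\mathfrak w^{\top}\! A_{13}^{K}\mathfrak w=\left\langle\left(\tau_1-\tfrac12\bm\beta\cdot\bm n\right)w_h,\,w_h\right\rangle_{\partial K},
\]
which is nonnegative by~\textbf{(A3)} and vanishes only when $w_h|_{\partial K}=0$.

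The delicate step, and in my view the main obstacle, is promoting this nonnegativity to strict definiteness by arguing that $w_h|_{\partial K}=0$ forces $w_h\equiv 0$ in $\mathcal P^k(K)$. A clean dimensional argument handles this in the range of $k$ relevant to the paper: any polynomial in $\mathcal P^k(K)$ that vanishes on every face of the simplex $K$ must be divisible by the product of the $d+1$ affine face-defining forms, a polynomial of total degree $d+1$, and is therefore identically zero whenever $k\le d$. This covers in particular the cases $k=0,1$ used in the numerical experiments. Combined with the boundary positivity computed above, this injectivity yields positive definiteness of each local block and hence of $A_{12}$ and $A_{13}$.
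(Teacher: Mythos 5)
Your proof follows the same core computation as the paper's: express the quadratic form of $A_{12}$ (and $A_{13}$) in terms of the function $x=\sum_j x_j\phi_j$, use integration by parts together with $\nabla\cdot\bm\beta=0$ to get $(\bm\beta x,\nabla x)_{\mathcal T_h}=\tfrac12\langle\bm\beta\cdot\bm n\,x,x\rangle_{\partial\mathcal T_h}$, and collapse both forms to $\langle(\tau_1-\tfrac12\bm\beta\cdot\bm n)x,x\rangle_{\partial\mathcal T_h}$, which \textbf{(A3)} makes nonnegative. The element-by-element localization you begin with is harmless but not needed --- the global computation is identical --- so up to that point the two arguments coincide. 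Where you genuinely add something is the final step: the paper simply writes ``$>0$'' here, whereas you correctly note that \textbf{(A3)} only gives strict positivity when the trace $x|_{\partial K}$ is nonzero, and you supply the missing injectivity argument (a polynomial in $\mathcal P^k(K)$ vanishing on all $d+1$ faces of the simplex is divisible by the degree-$(d+1)$ product of the face forms, hence is zero when $k\le d$). Your caution about the restriction $k\le d$ is warranted and in fact points to a real gap in the statement as written: for $k\ge d+1$ the bubble function $b=\prod_{i=0}^{d}\lambda_i\in\mathcal P^{d+1}(K)$ is a nonzero polynomial with $b|_{\partial K}=0$, so the quadratic form $\langle(\tau_1-\tfrac12\bm\beta\cdot\bm n)b,b\rangle_{\partial K}$ vanishes and the corresponding block of $A_{12}$ (and $A_{13}$) is only positive semidefinite. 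So your proof is the more careful of the two, and it is complete precisely on the range $k\le d$ where the proposition actually holds; the paper's one-line conclusion silently skips the step you identified as delicate.
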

\begin{proof}
	We only prove $A_{12}$ is positive definite; a similar argument applies to $A_{13}$.  The matrix $A_{12}$ is positive definite if and only if $\bm x^TA_{12}\bm x>0$ for any $\bm x=[x_1,x_2,\cdots,x_{N_2}]\in\mathbb R^{N_2} $. For $ x = \sum_{j=1}^{N_2} x_j \phi_j$, we have
	\begin{align*}
	\bm x^T A_{12}\bm x = \langle \tau_1x, x\rangle_{\partial \mathcal T_h} - (\bm{\beta}x, \nabla x)_{\mathcal T_h}.
	\end{align*}
	Moreover
	\begin{align*}
	(\bm \beta x,\nabla x)_{\mathcal T_h}=\langle\bm \beta\cdot\bm n x,x\rangle_{\partial\mathcal T_h}-(\bm \beta x,\nabla x)_{\mathcal T_h},
	\end{align*}
	this implies
	\begin{align*}
	(\bm \beta x,\nabla x)_{\mathcal T_h}&=\frac12\langle\bm \beta\cdot\bm n x,x\rangle_{\partial\mathcal T_h}.
	\end{align*}
	Then,
	\begin{align*}
	\bm x^T A_{12}\bm x  = \langle (\tau_1-\frac 1 2 \bm{\beta}\cdot\bm n) x, x\rangle_{\partial\mathcal T_h}>0,
	\end{align*}
	by the assumption concerning $ \tau_1 $.
\end{proof}

By simple algebraic operations in equation \eqref{system_equation2}, we obtain the following formulas for the matrices $ G_1 $, $ G_2 $, $ H_1 $, and $ H_2 $ in \eqref{local_solver}:
\begin{align*}
G_1 &= B_1^{-1}B_2(B_4+B_2^TB_1^{-1}B_2)^{-1}(B_5+B_2^TB_1^{-1}B_3)-B_1^{-1}B_3,\\
G_2 &= -(B_4+B_2^TB_1^{-1}B_2)^{-1}(B_5+B_2^TB_1^{-1}B_3),\\
H_1 &= -B_1^{-1}B_2(B_4+B_2^TB_1^{-1}B_2)^{-1},\\
H_2 &= (B_4+B_2^TB_1^{-1}B_2)^{-1}.
\end{align*}
We briefly describe how these matrices can be easily computed using the HDG method described in this work.

Since the spaces $ \bm{V}_h $ and $ W_h $ consist of discontinuous polynomials, some of the system matrices are block diagonal and each block is small and symmetric positive definite.  The matrix $ B_1 $ is this type, and therefore $ B_1^{-1} $ is easily computed and is also a matrix of the same type.  Therefore, the the matrices $ G_1 $, $ G_2 $, $ H_1 $, and $ H_2 $ are easily computed if $ B_4 + B_2^T B_1^{-1} B_2 $ is also easily inverted.%  We show below that this is indeed the case.

It can be checked that $B_2^T B_1^{-1} B_2$ is block diagonal with small nonnegative definite blocks.  Next, $B_4 = \begin{bmatrix}
A_{12} & -\gamma^{-1}A_4\\
A_4 & A_{13}
\end{bmatrix}$, where $A_4$ is symmetric positive block diagonal, $A_{12}$ and $A_{13}$ are positive block diagonal.  Due to the structure of $ B_1 $ and $ B_2 $, the matrix $B_2^TB_1^{-1}B_2 + B_4$ has the form
$\begin{bmatrix}
C_1 & -\gamma^{-1}A_4\\
A_4 & C_2
\end{bmatrix},
$
where $ C_1 $ and $ C_2 $ are symmetric positive block diagonal.  The inverse can be easily computed using the formula
$$\begin{bmatrix}
C_1 & -\gamma^{-1}A_{4}\\
A_4 & C_2
\end{bmatrix}^{-1} = \\
\begin{bmatrix}
C_1^{-1}-\gamma^{-1}C_1^{-1}A_4 D^{-1}A_4C_1^{-1} & \gamma^{-1}C_1^{-1}A_4 D^{-1}\\
-D^{-1}A_4C_1^{-1}  &D^{-1}
\end{bmatrix},
$$
where $D = C_2 +\gamma^{-1}A_4C_1^{-1}A_4$.
Furthermore, $ C_1^{-1} $ and $D^{-1}$ are both symmetric positive block diagonal.

\bibliographystyle{plain}
\bibliography{yangwen_ref_papers,yangwen_ref_books}
\end{document}